    \numberwithin{equation}{section}
    \DeclareMathOperator*\uplim{\overline{lim}}
    \DeclareMathOperator*\lowlim{\underline{lim}}
    \theoremstyle{definition}
    \newtheorem{Remark}{Remark}[section]
    \theoremstyle{plain}
    \newtheorem{Theorem}{Theorem}[section]
    \newtheorem{Proposition}[Theorem]{Proposition}
    \newtheorem{Lemma}[Theorem]{Lemma}
    \newtheorem{Corollary}[Theorem]{Corollary}
\newcommand{\E}{\mathbb{E}}
\newcommand{\p}{\mathbb{P}}
\newcommand{\dd}{\mathrm{d}}
\newcommand{\nn}{\nonumber}
\newcommand{\RTP}{\mathrm{RTP}}
\title{Hydrodynamic limit and large deviations for run-and-tumble particles with mean-field switching rates}
\author{Elena Pulvirenti, Frank Redig and Hidde van Wiechen}
\date{\today}
\begin{document}

\maketitle

\begin{center}
    \section*{Abstract}
    \end{center}
\begin{changemargin}{20mm}{20mm}
In this paper, we study run-and-tumble particles moving on two copies of the discrete torus (referred to as layers), where the switching rate between layers depends on a mean-field interaction among the particles. We derive the hydrodynamic limit of this model, as well as the large deviations from the hydrodynamic limit. Our main tool is the introduction of a weakly perturbed version of the system, whose hydrodynamic equations precisely characterize the trajectories associated with large deviations.
\end{changemargin}

\section{Introduction}
Run-and-tumble particles serve as simple models of active matter \cite{Tailleur}. Systems of active particles constitute an important class of non-equilibrium systems where at the microscopic scale energy is dissipated in order to produce directed motion. They show a rich phenomenology such as clustering and long-range order \cite{Tailleur}, \cite{vis}, \cite{schaller2010polar}.

In the mathematics literature on interacting particle systems, results on active particles are not very abundant (in contrast with the physics literature). To our knowledge, the first rigorous result on hydrodynamic limits for active particles is \cite{erignoux}, where a system of locally interacting active particles was studied.  In \cite{bodineau}, \cite{erignoux}, the authors prove the hydrodynamic limit (using the non-gradient method) and identify
a motility-induced phase separation and
a transition to collective motion from the equations obtained in the hydrodynamic limit. See also e.g.
\cite{noguchi2025spatiotemporal} for recent results in the physics literature on locally interacting active particles and collective effects therein.

In this paper we consider a simpler model where the interaction between the run-and-tumble particles is of mean-field type, i.e., via their empirical distribution. For this model we can then both prove the hydrodynamic limit and the large deviations from the hydrodynamic limit.
In our model, the particles move on the one-dimensional torus, and have an internal state which takes values $\pm 1$ and determines the direction of motion.
The interaction between the particles arises  implicitly via the flip rate at which particles flip their internal state which depends on the magnetization.

First, we start by deriving the hydrodynamic limit, which is a coupled system of partial differential equations for the densities of particles
with internal state $\pm 1$, and an ordinary differential equation for the ``magnetization''.
Second, we consider a weakly perturbed model where the influence of an external field which weakly depends on time and (microscopic) space is added. This field influences the rate at which particles flip their internal state and the rate at which they jump in the direction of their internal state.
This is reminiscent of the weakly asymmetric exclusion process \cite{KOV} which is an essential tool to study the large deviations for the trajectory of the density in the symmetric exclusion process (SEP). Contrary to the situation of the SEP, in our model, the perturbation does not act on the direction of the motion, which is always in the direction of the internal state.

We prove the hydrodynamic limit of this weakly perturbed model and use it to prove a large deviation principle for the trajectory of the densities in the original model. The technique of proof is based on a change of measure between the original and the weakly perturbed model and the associated  exponential martingale (the Radon-Nikodym derivative of the perturbed model w.r.t.\ the original model). Due to the mean-field character of the interaction, no super-exponential replacement lemmas are needed, i.e., the quantities appearing in the Radon-Nikodym derivative between the perturbed and unperturbed model are a function of the empirical densities and the magnetization.

The rest of our paper is organized as follows. In Section 2 we introduce the model, the weakly perturbed model and state the hydrodynamic limit of both. In Section 3 we prove large deviations for the trajectory of the densities. In Section \ref{proof hydro} we provide the proof of the hydrodynamic limits stated in Section 2.

\section{Run-and-tumble particles with mean-field switching rates}
In this section we describe the run-and-tumble particle model with mean-field switching rates. Later on we will also define a weakly perturbed version of this model which we will need for the large deviations. For both models, we will consider particles on the two-layered torus $V_N := \mathbb{T}_N \times S$, where $\mathbb{T}_N = \mathbb{Z}/N\mathbb{Z}$ is the discrete torus, and $S= \{-1,1\}$ which we will call the internal state space. We then say that a particle $(x,\sigma) \in V_N$ has position $x\in \mathbb{T}_N$ and internal state $\sigma \in S$. The parameter $N$ is a scaling parameter, and we will be interested in the limiting dynamics when $N\to\infty$.

We will consider processes of particle configurations $\{\eta^N_t:t\geq 0\}$ on the state space $\Omega_N = \mathbb{N}^{V_N}$. We denote by  $\eta^N_t(x,\sigma)$ the number of particles at site $(x,\sigma) \in V_N$ at time $t\geq 0$.  The Markovian dynamics is then described as follows:
\begin{itemize}
\item [i)] Active jump:
with rate $N$ a particle jumps from $(x,\sigma)$ to $(x+\sigma,\sigma)$.
\item[ii)] Internal state flip: a particle jumps from $(x,\sigma)$ to $(x,-\sigma)$, with a mean-field rate denoted by $c(\sigma, m_N(\eta_t^N))$
\end{itemize}
Here by ``mean-field'' we mean that the flip rate 
$c(\sigma, m_N(\eta_t^N))$ depends on the whole configuration $\eta$ only via its magnetization $m_N(\eta_t^N)$.
For $\eta\in\Omega_N$, this magnetization is defined via
 \begin{equation}\label{magn def}
        m_N(\eta) := \frac{1}{|\eta|} \sum_{x \in \mathbb{T}_N} \big(\eta(x,1) - \eta(x,-1)\big),
\end{equation}
where $|\eta|:= \sum_{(x,\sigma) \in V_N} \eta(x,\sigma)$ denotes the total number of particles in the configuration $\eta$. Here we also use the convention that if $|\eta|=0$ then $m_N(\eta)=0$.  We assume $c(\sigma, \cdot)$ to be bounded from above and below (away from zero) and Lipschitz-continuous in the second variable, i.e., there exists an $L>0$ such that for $\sigma \in \{-1,1\}$ and $m_1,m_2 \in [-1,1]$ we have that 
\begin{equation}
    |c(\sigma,m_1) - c(\sigma,m_2)| \leq L|m_1-m_2|.
\end{equation}

The generator of the process is described above,
working on functions $f:\Omega_N \to \mathbb{R}$, is defined as follows.
    \begin{align}\label{gen}
        \mathscr{L}_Nf(\eta) 
        &\nn= N\sum_{(x,\sigma)\in V_N} \eta(x,\sigma)\big[f(\eta^{(x,\sigma)\to (x+\sigma,\sigma)}) - f(\eta)\big]\\
        &\qquad \qquad  + \sum_{(x,\sigma)\in V_N} c(\sigma,m_N(\eta)) \eta(x,\sigma)\big[f(\eta^{(x,\sigma)\to(x,-\sigma)})-f(\eta)\big],
    \end{align}
where $\eta^{(x,\sigma) \to (y,\sigma')}$ denotes the configuration $\eta$ where a single particle has jumped from $(x,\sigma)$ to $(y,\sigma')$, if possible.

\begin{Remark}
    If we choose the rates $c(\sigma,m) \equiv 1$, then the particles do not interact with each other, and we recover the run-and-tumble particle process, studied, for instance, in \cite{Tailleur, hidde-ergodic, Hidde-Fluctuations, Ginkel, demaerel2018active}. An actual example of mean-field rates where particles do interact with one another is, for instance, given by the Curie-Weiss Glauber rates $c(\sigma,m) = e^{-\sigma \beta m}$ with $\beta>0$.
\end{Remark}

\subsection{Hydrodynamic limit}\label{hydro section}
For $N\in\mathbb{N}$, we define the empirical measure of a configuration $\eta\in \Omega_N$ by
    \begin{equation}\label{emp meas}
        \pi^N(\eta) := \frac{1}{N} \sum_{(x,\sigma)\in V_N} \eta(x,\sigma) \delta_{(\frac{x}{N},\sigma)},
    \end{equation}
where $\delta$ denotes the Dirac measure. For a given $\eta$, $\pi^N(\eta)$ is a positive Radon measure on the macroscopic space $V:=\mathbb{T}\times S$, where  $\mathbb{T} = [0,1]$ is the torus. We denote the space of positive Radon measures on $V$ by $\mathcal{M}_V$. 
Given $t\geq 0$, we further denote the empirical measure of the configuration $\eta^N_t$ as 
    \begin{equation}
        \pi^N_{t} := \pi^N(\eta_t^N) = \frac{1}{N} \sum_{(x,\sigma)\in V_N} \eta_t^N(x,\sigma)\delta_{(\tfrac{x}{N},\sigma)},
    \end{equation}
 For every $N\in\mathbb{N}$, this produces a process 
 $\{\pi^N_{t}:t\in [0,T]\}$ with trajectories in the Skorokhod space $D([0,T];\mathcal{M}_V)$. The corresponding space of test functions is given by,
    \begin{equation}
        C^\infty(V) := \{\phi :V\to \mathbb{R}\ \big|\ \phi(\cdot,\sigma) \in C^\infty(\mathbb{T}) \text{ for all $\sigma \in S$}\}.
    \end{equation}
For $\phi \in C^\infty(V)$ we now denote the pairing
    \begin{equation}
        \left< \pi^N_{t}, \phi\right> = \frac{1}{N} \sum_{(x,\sigma)\in V_N}\eta_t^N(x,\sigma) \phi(\tfrac{x}{N},\sigma).
    \end{equation}

For $N\in\mathbb{N}$ and  smooth $\varrho(x,\sigma):V\to\mathbb{R}_{\geq 0}$  we define the product Poisson measures 
    \begin{equation}\label{poissonini}
        \mu_N^{\varrho} = \bigotimes_{(x,\sigma) \in V_N} \mathrm{Pois}(\varrho\left(\tfrac{x}N,\sigma\right)), 
    \end{equation}
which is the local equilibrium measure for the non-interacting ($c(\sigma,m)\equiv1$) case  (see \cite{hidde-ergodic}). We assume that the process $\{\eta^N_t:t\geq0\}$ starts at $t=0$ from the configuration $\eta^N= \eta^N_0$ distributed according to $\mu_N^{\varrho}$, which fixes the initial density profile. More precisely, $\pi^N_{0}$ converges as $N\to\infty$ to the positive measure with density $\varrho(x,\sigma):V\to\mathbb{R}_{\geq 0}$, i.e., we have that for every $\phi \in C^\infty(V)$ and $\varepsilon>0$
    \begin{equation}
        \lim_{N\to\infty} \p_{N}^{\varrho}\left(\left|\langle \pi^N_{0},\phi\rangle - \sum_{\sigma \in S} \int \phi(x,\sigma)\varrho(x,\sigma)\dd x\right|>\varepsilon\right) = 0.
    \end{equation}
Here $\p_{N}^{\varrho}$ denotes the path-space measure of the process with initial distribution $\mu_N^{\varrho}$, where we omit the dependence on the terminal time $T$ since we assume that this is fixed. 

The question of the hydrodynamic limit is to find the limiting PDE for  $\pi^N_{t}$ as $N\to\infty$.  
We start with the following preliminary computation
    \begin{align}
        \mathscr{L}_N \langle\pi^N(\eta),\phi\rangle
        &\nn= N\sum_{(x,\sigma) \in V_N} \eta(x,\sigma) \left(\big\langle\pi^N\big(\eta^{(x,\sigma)\to(x+\sigma,\sigma)}\big), \phi\big\rangle - \langle \pi^N(\eta),\phi\rangle\right)\\
        &\ \ \ \ \ \ \ \ + \sum_{(x,\sigma) \in V_N} \eta(x,\sigma) c(\sigma, m_N(\eta))\left(\big\langle\pi^N\big(\eta^{(x,\sigma)\to(x,-\sigma)}\big), \phi\big\rangle - \langle \pi^N(\eta),\phi\rangle\right)\\
        &\nn=  \frac{1}{N} \sum_{(x,\sigma) \in V_N} \eta(x,\sigma) \left[N\big(\phi(\tfrac{x+\sigma}{N},\sigma) - \phi(\tfrac{x}{N},\sigma)\big) + c(\sigma,m_N(\eta))\big(\phi(x,-\sigma)- \phi(x,\sigma)\big) \right].
    \end{align}
From this computation, we observe that the evolution of the empirical measure  depends on the evolution of the magnetization $m_N(\eta^N_t)$. Note however that the magnetization, defined in \eqref{magn def}, can be expressed in terms of the empirical measure as follows, 
    \begin{equation}\label{mag from emp}
        m_N(\eta) = m(\pi^N(\eta)) := \frac{\langle\pi^N(\eta), \mathds{1}_{\sigma =1} - \mathds{1}_{\sigma=-1}\rangle}{\langle \pi^N(\eta),1\rangle}.
    \end{equation}
This representation of the magnetization motivates the following definition of the magnetization corresponding to a density $\varrho(x,\sigma): V\to \mathbb{R}_{\geq0}$ as 
    \begin{equation}\label{mag def}
        m(\varrho) := \frac{\langle \varrho, \mathds{1}_{\sigma=1} - \mathds{1}_{\sigma=-1}\rangle_{L^2(V)}}{\langle \varrho, 1\rangle_{L^2(V)}},
    \end{equation}
where $\langle\cdot,\cdot \rangle_{L^2(V)}$  denotes the inner product of $L^2$-functions on the space $V$, given by 
    \begin{equation}
        \langle \psi, \phi\rangle_{L^2(V)} = \sum_{\sigma \in S} \int_{\mathbb{T}} \psi(x,\sigma) \phi(x,\sigma)\dd x,
    \end{equation}
for any $\psi,\phi \in L^2(V)$. 
\begin{Theorem}\label{hydro theorem}
    $\p^\varrho_N(\pi^N_{\cdot}\in \cdot) \to \delta_{\alpha}$, where $\alpha\in D([0,T];\mathcal{M}_V)$ is the trajectory of measures with density $\varrho_t(x,\sigma)$ 
    which solves the following  equation,
    \begin{equation}\label{PDE}
            \dot{\varrho}_t(x,\sigma) = -\sigma \partial_x \varrho_t(x,\sigma) + c(-\sigma,m(\varrho_t))\varrho_t(x,-\sigma) - c(\sigma,m(\varrho_t))\varrho_t(x,\sigma),
    \end{equation}
    with initial condition $\varrho_0(x,\sigma) = \varrho(x,\sigma)$.
\end{Theorem}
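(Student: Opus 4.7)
The plan is to follow the classical three-step martingale program for hydrodynamic limits: establish tightness of $\{\pi^N_\cdot\}_N$ in $D([0,T];\mathcal{M}_V)$, show that any subsequential limit concentrates on weak solutions of \eqref{PDE}, and prove uniqueness of such solutions. Convergence in distribution to a deterministic limit $\delta_\alpha$ will then follow.

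For tightness, I would first note that the total mass $|\eta^N_t|$ is conserved by the dynamics, and by Poisson concentration $|\eta^N_0|/N \to \langle \varrho, 1\rangle_{L^2(V)}$ in probability, which yields a uniform compact containment condition for $\{\pi^N_\cdot\}$. For the time oscillations, for each test function $\phi \in C^\infty(V)$ I would introduce the Dynkin martingale
\begin{equation*}
M^N_t(\phi) = \langle \pi^N_t, \phi\rangle - \langle \pi^N_0, \phi\rangle - \int_0^t \mathscr{L}_N \langle \pi^N_s,\phi\rangle \, \dd s,
\end{equation*}
and check that the preliminary computation in the excerpt, combined with a Taylor expansion of the discrete gradient, gives $|\mathscr{L}_N \langle \pi^N_s,\phi\rangle| \leq C(\phi)\, \langle \pi^N_s,1\rangle$. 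A carrée-du-champ computation shows that the predictable quadratic variation $\langle M^N(\phi)\rangle_t$ is of order $1/N$, using that both the active rate (scaled by $N$) and the flip rate $c$ produce jumps of size $O(1/N)$ in $\langle \pi^N,\phi\rangle$. This yields Aldous' criterion and hence tightness.

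For identification, let $\pi_\cdot$ be a subsequential limit. The crucial point is that $m_N(\eta^N_s) = m(\pi^N_s)$ as in \eqref{mag from emp}, and since the total mass is conserved and bounded below with high probability, the map $\pi \mapsto m(\pi)$ is continuous at $\pi^N_s$ uniformly in $s$. Combined with the Lipschitz continuity of $c(\sigma,\cdot)$, this lets me pass to the limit $N\to\infty$ in the martingale identity. Using the Taylor expansion $N(\phi(\tfrac{x+\sigma}{N},\sigma) - \phi(\tfrac{x}{N},\sigma)) \to \sigma \partial_x \phi(\tfrac{x}{N},\sigma)$, the limit $\pi_\cdot$ satisfies for every $\phi\in C^\infty(V)$
\begin{equation*}
\langle \pi_t,\phi\rangle - \langle \pi_0,\phi\rangle = \int_0^t \Big\langle \pi_s,\, \sigma \partial_x \phi + c(\sigma, m(\pi_s))\big(\phi(\cdot,-\sigma) - \phi(\cdot,\sigma)\big)\Big\rangle \, \dd s,
\end{equation*}
which is the weak formulation of \eqref{PDE}. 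Absolute continuity of $\pi_t$ with respect to Lebesgue-times-counting measure, and smoothness of the density $\varrho_t$, follow from a standard duality/regularization argument using that the total mass is finite and the dual semigroup is smoothing.

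For uniqueness, given two weak solutions $\varrho^{(1)}, \varrho^{(2)}$ with the same initial condition and common conserved total mass, I would bound $\|\varrho^{(1)}_t - \varrho^{(2)}_t\|_{L^1(V)}$ using the mild formulation along the transport semigroup generated by $-\sigma\partial_x$. The nonlinear contribution is controlled by the Lipschitz constant of $c$ together with the estimate $|m(\varrho^{(1)}_s) - m(\varrho^{(2)}_s)| \leq C \|\varrho^{(1)}_s - \varrho^{(2)}_s\|_{L^1(V)}$, valid since the total mass is bounded away from $0$. Gronwall's inequality then yields uniqueness. The main technical obstacle throughout the argument is precisely this: the magnetization $m$ is a nonlinear ratio, so every use of continuity, convergence, or Lipschitz estimate must be accompanied by a uniform lower bound on $\langle \pi, 1\rangle$, which comes from exact conservation of particle number under the microscopic dynamics and Poisson concentration of the initial data.
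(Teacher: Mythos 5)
Your proposal follows essentially the same route as the paper: there, Theorem \ref{hydro theorem} is obtained as the $H\equiv 0$ case of Theorem \ref{weak hydro theorem}, whose proof in Section \ref{proof hydro} is exactly your three-step martingale program (Aldous tightness via the Dynkin martingale and carr\'e du champ bounds of order $1/N$, identification of subsequential limits through the weak formulation with the magnetization read off from the empirical measure, and uniqueness of the limiting PDE by a Lipschitz--Gronwall contraction in $L^\infty([0,T];L^1(V))$ using conservation of mass to control the ratio defining $m$). The one inaccuracy is your appeal to a ``smoothing'' dual semigroup — the transport-plus-flip semigroup does not regularize — but nothing in the argument requires smoothness of $\varrho_t$, only existence and uniqueness of the $L^1$ solution, which your Gronwall step already supplies.
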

This theorem will be a consequence of a more general hydrodynamic limit of a weakly perturbed modification of the model, which we will introduce in Section \ref{wam section}. Apart from the hydrodynamic limit of the empirical measure, we can also find a limiting equation of the magnetization.

\begin{Corollary}\label{mag hydro}
    $\p^\varrho_N(m(\pi^N_{\cdot})\in \cdot) \to \delta_{m(\varrho_\cdot)}$, where $\varrho_t$ solves \eqref{PDE}. Furthermore, $m_t := m(\varrho_t)$ solves the following equation, 
    \begin{equation}\label{mag-PDE}
        \dot{m}_t =  c(-1,m_t)\cdot  (1-m_t)-c(1,m_t) \cdot (1+m_t),
    \end{equation}
    with initial condition  $m_0 = m(\varrho)$.
\end{Corollary}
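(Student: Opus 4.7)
The plan is to deduce Corollary \ref{mag hydro} from Theorem \ref{hydro theorem} together with a direct computation using the PDE \eqref{PDE}. The first statement amounts to showing that the functional $m:\mathcal{M}_V\to \mathbb{R}$ defined in \eqref{mag def} is continuous (in the vague or weak topology) on the set of measures of strictly positive total mass, and then invoking the continuous mapping theorem. Since the dynamics conserves the total number of particles, we have $\langle \pi^N_t,1\rangle = \langle\pi^N_0,1\rangle$ for all $t$, and by assumption on the initial distribution $\mu_N^\varrho$ this converges to $\langle \varrho,1\rangle_{L^2(V)}$, which we assume to be strictly positive (else there are no particles and the statement is vacuous). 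Moreover the map $m$ is a ratio of two continuous linear functionals, so it is continuous at every $\pi$ with $\langle \pi,1\rangle>0$. Because Theorem \ref{hydro theorem} gives convergence in $D([0,T];\mathcal{M}_V)$ to a deterministic continuous trajectory, this convergence is equivalent to uniform convergence in probability, and composing with $m$ then yields $m(\pi^N_\cdot)\to m(\varrho_\cdot)$ uniformly in probability, which is the desired $\delta$-convergence.

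For the ODE characterizing $m_t=m(\varrho_t)$, I would write $m_t = A_t/B_t$ with $A_t = \langle \varrho_t,\mathds{1}_{\sigma=1}-\mathds{1}_{\sigma=-1}\rangle_{L^2(V)}$ and $B_t = \langle \varrho_t,1\rangle_{L^2(V)}$, and compute the time derivatives by integrating the PDE \eqref{PDE} against the respective test functions. The transport terms $-\sigma\partial_x\varrho_t(x,\sigma)$ vanish upon integrating over the torus, and the flip contributions to $\dot B_t$ cancel by the antisymmetry between the two layers, giving $\dot B_t=0$ and hence $B_t=\langle\varrho,1\rangle_{L^2(V)}$ for all $t$. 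For $\dot A_t$, the flip contributions add (rather than cancel), yielding
\begin{equation}
    \dot{A}_t = 2\int_{\mathbb{T}} \big[c(-1,m_t)\varrho_t(x,-1)-c(1,m_t)\varrho_t(x,1)\big]\dd x.
\end{equation}
Writing $R_\pm(t):=\int \varrho_t(x,\pm 1)\dd x$, we have $R_++R_-=B_t$ and $R_+-R_-=A_t$, so $R_\pm(t) = B_t(1\pm m_t)/2$. Substituting and dividing by $B_t$ (constant, nonzero) gives $\dot m_t = \dot A_t / B_t = c(-1,m_t)(1-m_t)-c(1,m_t)(1+m_t)$, which is precisely \eqref{mag-PDE}, with the initial condition $m_0=m(\varrho)$ inherited from $\varrho_0=\varrho$.

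The only delicate point in this argument is that $m$ is a \emph{ratio} of linear functionals rather than itself linear, so continuity fails at measures of zero mass. This is harmless here because mass conservation at both the particle and PDE level keeps us uniformly bounded away from the singular set, but it is the step where the assumption $\langle\varrho,1\rangle_{L^2(V)}>0$ genuinely enters. Everything else is a routine consequence of \eqref{PDE} and the continuous mapping theorem.
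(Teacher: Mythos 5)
Your proposal is correct and follows essentially the same route as the paper: the paper also obtains the first claim by continuity of $m$ composed with Theorem \ref{hydro theorem} (via the continuous mapping argument in the proof of Corollary \ref{weak mag hydro}), and derives the ODE by pairing the PDE with $\mathds{1}_{\sigma=1}-\mathds{1}_{\sigma=-1}$, using conservation of mass, the vanishing of the transport term by periodicity, and the identity $\int_{\mathbb{T}}\varrho_t(x,\sigma)\,\dd x = \tfrac{1}{2}\langle\varrho,1\rangle_{L^2(V)}(1+\sigma m_t)$. The only difference is that the paper routes the computation through the perturbed Corollary \ref{weak mag hydro} with $H\equiv 0$, while you compute directly; your explicit remark about continuity of $m$ failing at zero total mass is a detail the paper leaves implicit.
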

\begin{Remark}
    When considering the Curie-Weiss Glauber rates $c(\sigma,m) = e^{-\sigma \beta m}$ with inverse temperature $\beta>0$, the evolution of the magnetization is given by 
        \begin{equation}
            \dot{m}_t = 2\sinh(\beta m_t) -2m_t\cosh(\beta m_t).
        \end{equation}
    As $t\to\infty$ the process $m_t$ converges to a solution of the mean-field equation 
        \begin{equation}
            m^* = \tanh(\beta m^*). 
        \end{equation}
    For $\beta\leq 1$ the only solution is $m^*=0$. However, for $\beta>1$, there exist two nonzero solutions, and if $m_0\neq 0$, the process $m_t$ converges to either the  positive or negative solution, depending on the initial value $m_0$. This simulates a type of flocking behavior of the particles, where particles tend to move in the same direction. 
\end{Remark}

\subsection{Weakly perturbed model}\label{wam section}
We furthermore introduce a weakly perturbed version of our model, which will be a key tool for the study of large deviations since it produces the deviating trajectories which have a finite rate function. The weak perturbation will be parametrized by a time-dependent potential $H:[0,T]\times V \to \mathbb{R}$, which we will assume to be differentiable in time and continuous in space, and which we will denote by $H\in C^{1,0}([0,T]\times V)$. The time-dependent generator of this model, acting on functions $f:\Omega_N\to\mathbb{R}$, is given as follows:
\begin{align}\label{weak asym gen}
    \mathscr{L}^H_{N,t} f(\eta)  &= N \sum_{(x,\sigma)\in V_N} \eta(x,\sigma)e^{H_t(\frac{x+\sigma}{N},\sigma ) - H_t(\frac{x}{N},\sigma) }  \big[f(\eta^{(x,\sigma)\to (x+\sigma,\sigma)}) - f(\eta)\big]\\
    &\nn\ \ \ \ \ \ \ \ \ \ \ \ \ \ \ \ \ \ \ + \sum_{(x,\sigma)\in V_N}  \eta(x,\sigma)c(\sigma,m_N(\eta))e^{H_t(\frac{x}{N},-\sigma) - H_t(\frac{x}{N},\sigma)}\big[f(\eta^{(x,\sigma)\to(x,-\sigma)})-f(\eta)\big].
\end{align}
Note that for $H=0$ we recover the original model.
We will denote by $\p_N^{\varrho,H}$ the path-space measure of this process, where $\eta^N_0$ is distributed as $\mu^{\varrho}_N$ (cf.\ \eqref{poissonini}). We further abbreviate  $\widetilde{H}_t(x) := H_t(x,1) - H_t(x,-1)$. The hydrodynamic limit of this process is then given in the following theorem. 
\begin{Theorem}\label{weak hydro theorem}
    $\p^{\varrho,H}_N(\pi^N_{\cdot}\in \cdot) \to \delta_{\alpha^H}$, where $\alpha^H\in D([0,T];\mathcal{M}_V)$ is the trajectory of measures with density $\varrho^H_t(x,\sigma)$ 
    which solves the following  equation,
    \begin{equation}\label{weak-PDE}
            \dot{\varrho}^H_t(x,\sigma) = -\sigma \partial_x \varrho^H_t(x,\sigma) + c(-\sigma,m(\varrho^H_t))e^{\sigma \widetilde{H}_t(x)}\varrho^H_t(x,-\sigma) - c(\sigma,m(\varrho^H_t))e^{-\sigma \widetilde{H}_t(x)}\varrho^H_t(x,\sigma),
    \end{equation}
    with initial condition $\varrho^H_0(x,\sigma) = \varrho(x,\sigma)$.
\end{Theorem}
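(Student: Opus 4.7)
The plan is the classical martingale / tightness / uniqueness scheme for hydrodynamic limits. A key simplification, already highlighted in the introduction, is that the identity $m_N(\eta) = m(\pi^N(\eta))$ in \eqref{mag from emp} expresses the flip rate exactly as a continuous function of the empirical measure, so no super-exponential replacement lemma is needed; the whole argument lives at the level of $\pi^N$.

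First I would compute the action of $\mathscr{L}^H_{N,s}$ on $\langle \pi^N(\eta),\phi\rangle$ for $\phi\in C^\infty(V)$, mirroring the computation displayed just before Theorem~\ref{hydro theorem}. After a discrete Taylor expansion and using uniform continuity of $H$ on the compact set $[0,T]\times V$ to handle the factor $e^{H_s(\tfrac{x+\sigma}{N},\sigma)-H_s(\tfrac{x}{N},\sigma)} \to 1$, the drift converges, uniformly on configurations of bounded total mass, to
\[
\int_V\Big[\sigma\partial_x\phi(x,\sigma) + c(\sigma,m(\pi^N_s))\,e^{-\sigma\widetilde{H}_s(x)}\bigl(\phi(x,-\sigma)-\phi(x,\sigma)\bigr)\Big]\,\dd\pi^N_s(x,\sigma).
\]
The associated Dynkin martingale $M^{N,\phi}_t$ has predictable quadratic variation of order $1/N$ (obtained by summing squared single-particle jumps against their rates, using $\phi\in C^\infty$) and hence vanishes in $L^2$. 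Total-mass conservation gives $\langle \pi^N_t,1\rangle = \langle \pi^N_0,1\rangle$, and the latter converges to $\langle\varrho,1\rangle$ by the Poisson law of large numbers under $\mu_N^{\varrho}$. Combining these ingredients, the Aldous--Rebolledo criterion yields tightness of each $\langle \pi^N_\cdot,\phi\rangle$ in $D([0,T];\mathbb{R})$, and then Jakubowski's criterion gives tightness of $\{\pi^N_\cdot\}_N$ in $D([0,T];\mathcal{M}_V)$. Because the initial measure has a density and the dynamics has bounded rates, any subsequential limit point is supported on trajectories $\varrho^H_t(x,\sigma)\dd x$ of absolutely continuous measures with total mass $\langle\varrho,1\rangle>0$, on which the map $\pi \mapsto m(\pi)$ is continuous, and the vanishing of $M^{N,\phi}$ identifies any such limit as a weak solution of \eqref{weak-PDE} with initial datum $\varrho$.

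The crux is then uniqueness of this weak solution, which is what I would expect to be the main obstacle. For a fixed continuous magnetization trajectory $s\mapsto m_s$, \eqref{weak-PDE} becomes a pair of linear transport equations with bounded source terms, uniquely solved along the characteristics $x\mapsto x+\sigma t$; the solution preserves absolute continuity and conserves $\langle\varrho^H_t,1\rangle = \langle\varrho,1\rangle > 0$, so $m(\varrho^H_t)$ is well defined at every time. Given two weak solutions $\varrho^1,\varrho^2$ with the same initial datum, a Gr\"onwall estimate on $\|\varrho^1_t-\varrho^2_t\|_{L^1(V)}$, using the Lipschitz bound on $c(\sigma,\cdot)$ together with the boundedness of $e^{\pm\widetilde{H}}$ to control $|m(\varrho^1_s)-m(\varrho^2_s)|$ by $\|\varrho^1_s-\varrho^2_s\|_{L^1(V)}/\langle\varrho,1\rangle$, closes the mean-field self-consistency loop and forces $\varrho^1 = \varrho^2$. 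Nonlocality of the coupling through $m(\varrho^H_s)$ is what makes this step delicate; once it is settled, the tight sequence has a unique limit $\alpha^H$, and this upgrades subsequential convergence to $\p^{\varrho,H}_N(\pi^N_\cdot\in\cdot)\to\delta_{\alpha^H}$, proving the theorem.
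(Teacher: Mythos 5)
Your proposal is correct and follows essentially the same route as the paper: Dynkin martingale plus Taylor expansion of the generator, $L^2$-vanishing of the martingale via the carr\'e du champ, Aldous-type tightness in $D([0,T];\mathcal{M}_V)$, identification of limit points as weak solutions, and uniqueness of \eqref{weak-PDE} via an $L^1$ Gr\"onwall estimate exploiting the Lipschitz continuity of $c(\sigma,\cdot)$ and conservation of total mass. The only cosmetic difference is that the paper packages existence and uniqueness of the PDE as a Picard contraction on $L^\infty([0,T];L^1(V))$ for small $T$ rather than your direct Gr\"onwall comparison of two weak solutions, but the underlying estimate is identical.
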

Note that Theorem \ref{hydro theorem}  follows from this Theorem \ref{weak hydro theorem} by choosing $H\equiv 0$. The proof of  Theorem \ref{weak hydro theorem} will be postponed to Section \ref{proof hydro}. Below we give the evolution of the magnetization under the perturbed dynamics. 
\begin{Corollary}\label{weak mag hydro}
    $\p^{\varrho,H}_N(m(\pi^N_{\cdot})\in \cdot) \to \delta_{m(\varrho^H_\cdot)}$, where $\varrho^H_t$ solves \eqref{weak-PDE}. Furthermore, $m^H_t := m(\varrho^H_t)$ solves the following equation, 
    \begin{equation}\label{weak mag-PDE}
        \dot{m}^H_t =  \frac{1}{\langle \varrho,1\rangle_{L^2(V)}} \left\langle\varrho^H_t, -2\sigma e^{-\sigma \widetilde{H}_t(x)}c(\sigma,m^H_t) \right\rangle_{L^2(V)}.
    \end{equation}
    with initial condition  $m_0 = m(\varrho)$.
\end{Corollary}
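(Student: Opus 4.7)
The plan is to derive both assertions directly from Theorem \ref{weak hydro theorem}: the convergence statement by a continuous mapping argument on the ratio \eqref{mag from emp}, and the ODE by an elementary computation on the PDE \eqref{weak-PDE}.

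For the convergence, I exploit the representation \eqref{mag from emp}, which writes the discrete magnetization as the ratio
$$
m(\pi^N_t) = \frac{\langle \pi^N_t,\, \mathds{1}_{\sigma=1} - \mathds{1}_{\sigma=-1}\rangle}{\langle \pi^N_t,\,1\rangle}.
$$
The numerator is the pairing of $\pi^N_t$ against a function that lies in $C^\infty(V)$ (it is constant in $x$ for each of the two values of $\sigma$), so Theorem \ref{weak hydro theorem} gives its convergence in probability, uniformly on $[0,T]$, to $\langle \varrho^H_t,\, \mathds{1}_{\sigma=1}-\mathds{1}_{\sigma=-1}\rangle_{L^2(V)}$. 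For the denominator, both the active jump and the internal flip in \eqref{weak asym gen} conserve the total number of particles, so $\langle \pi^N_t,1\rangle = |\eta^N_0|/N$ for every $t$; by standard Poisson concentration this converges in probability to $\langle \varrho,1\rangle_{L^2(V)}$, which is strictly positive. Since the limiting denominator is bounded away from zero, continuous mapping then delivers the convergence of $m(\pi^N_\cdot)$ to $m(\varrho^H_\cdot)$.

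For the evolution equation, I first note that the same microscopic cancellation carries over to \eqref{weak-PDE}: integrating its right-hand side over $V$, the transport term vanishes by periodicity, while the gain and loss flip contributions cancel after the relabelling $\sigma \mapsto -\sigma$ in one of the two sums over $\sigma$. Hence the total mass $\langle \varrho^H_t,1\rangle_{L^2(V)} = \langle \varrho,1\rangle_{L^2(V)}$ is conserved in $t$, so
$$
\dot m^H_t = \frac{1}{\langle \varrho,1\rangle_{L^2(V)}}\,\frac{d}{dt}\bigl\langle \varrho^H_t,\,\mathds{1}_{\sigma=1}-\mathds{1}_{\sigma=-1}\bigr\rangle_{L^2(V)}.
$$
Substituting \eqref{weak-PDE} into the right-hand side, the transport term again vanishes by periodicity, while reindexing $\sigma \mapsto -\sigma$ in the gain-flip sum combines it with the loss-flip sum to yield precisely the integrand $-2\sigma\, c(\sigma,m^H_t)\,e^{-\sigma\widetilde H_t(x)}\,\varrho^H_t(x,\sigma)$ under the $L^2(V)$ pairing, which matches \eqref{weak mag-PDE}.

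There is no serious obstacle: the only point requiring a little care is ensuring that the denominator in $m(\pi^N_\cdot)$ remains bounded away from zero with probability tending to one, but this is immediate from conservation of particle number together with the concentration of the Poissonian initial mass $|\eta^N_0|/N$ around $\langle \varrho,1\rangle_{L^2(V)}$.
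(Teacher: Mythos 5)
Your proof is correct and follows essentially the same route as the paper: a continuous-mapping argument for the convergence of $m(\pi^N_\cdot)$, and for the ODE the combination of conservation of total mass, vanishing of the transport term by periodicity, and the reindexing $\sigma\mapsto-\sigma$ to merge the gain and loss flip terms into $-2\sigma c(\sigma,m^H_t)e^{-\sigma\widetilde H_t(x)}\varrho^H_t(x,\sigma)$. Your treatment of the denominator $\langle\pi^N_t,1\rangle$ (microscopic conservation plus Poisson concentration around the strictly positive $\langle\varrho,1\rangle_{L^2(V)}$) is in fact slightly more careful than the paper's bare assertion that $\alpha_N\to\alpha$ implies $m(\alpha_N)\to m(\alpha)$.
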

\begin{proof}
    Note that if $\alpha_N \to \alpha$  in $D([0,T]:\mathcal{M}_V)$ then $m(\alpha_N) \to m(\alpha)$ in $D([0,T]:[-1,1])$, hence  $m(\pi^N_\cdot) \xrightarrow{d} m_\cdot$ under $\p^{\varrho,H}_N$. Therefore, we can find an equation for the evolution of the magnetization from the evolution of $\varrho^H_t$. First note that $\langle\varrho_t^H,1\rangle_{L^2(V)} = \langle \varrho,1\rangle_{L^2(V)}$ for all $t\geq0$, which is due to the conservation of particles. Therefore, using the definition \eqref{mag def}, we find
        \begin{equation}\label{time mag}
            \dot{m}_t = \frac{\langle \dot{\varrho}^H_t, \mathds{1}_{\sigma=1} - \mathds{1}_{\sigma=-1}\rangle_{L^2(V)}}{\langle\varrho,1\rangle_{L^2(V){}}}
        \end{equation}
    By the periodic boundary conditions, we have that 
        \begin{equation}
            \langle -\sigma \partial_x\varrho_t^H, \mathds{1}_{\sigma=1}-\mathds{1}_{\sigma =-1}\rangle_{L^2(V)} = 0,
        \end{equation}
    hence \eqref{weak mag-PDE} follows by filling in \eqref{weak-PDE} into \eqref{time mag}.
\end{proof}

Corollary \ref{mag hydro} follows from Corollary \ref{weak mag hydro} by choosing $H\equiv0$ and using that for any density $\varrho$ we have that 
\begin{equation}
    \frac{\int_{\mathbb{T}}\varrho(x,\sigma) \dd x }{\langle \varrho,1\rangle_{L^2(V)}} = \tfrac{1}{2}(1+\sigma m(\varrho)).
\end{equation}
Note that, unlike in the unperturbed model, under the perturbed dynamics the evolution of the magnetization is not a closed equation, but depends on the density $\varrho^H_t$. This is because under the these dynamics the magnetization process is no longer a Markov process on its own, and additional information on the positions of the particles is required.

\section{Large deviations}
In this section we will prove a large deviation principle for the run-and-tumble particle process with mean-field switching rates. We start by defining the rate function $\mathcal{I}^{\varrho}: D([0,T];\mathcal{M}_V) \to [0,\infty]$, which is given in two parts
    \begin{align}\label{rate function}
        \mathcal{I}^{\varrho}(\widehat{\alpha}) = h_0^{\varrho}(\widehat{\alpha}_0) + \mathcal{I}_{tr}(\widehat{\alpha})
    \end{align}
Here $h_0^{\varrho}(\widehat{\alpha}_0)$ is the static part of the large deviation rate function, only depending on the measure at time $t=0$. It can be informally written as
    \begin{equation}
        h^\varrho_0(\widehat{\alpha}_0) = \mu^\varrho_N(\pi^N_{0} \approx \widehat{\alpha}_0),
    \end{equation}
i.e, it corresponds to the large deviation principle of the initial density profile $\pi^N_{0}$ under the starting distribution $\mu^\varrho_N$.
Since $\mu_N^{\varrho}$ is given by a product Poisson measure, the corresponding rate function is known, and given by 
    \begin{equation}\label{rateini}
        h_0^{\varrho}(\widehat{\alpha}_0) = \sup_\phi h_0^{\varrho}(\widehat{\alpha}_0;\phi), \ \ \ \ \ \ \ h_0^{\varrho}(\widehat{\alpha}_0;\phi) = \langle \widehat{\alpha}_0,\phi\rangle - \langle \varrho, e^{\phi}-1\rangle_{L^2(V)}.
    \end{equation}
Here the supremum is taken over all $\phi \in C^\infty(V)$.
    
The term $\mathcal{I}_{tr}(\widehat{\alpha})$ in \eqref{rate function} is the dynamic part of the rate function, and depends on the whole trajectory $\widehat{\alpha}$. It is given by the following:
    \begin{equation}\label{dynamic ldp}
        \mathcal{I}_{tr}(\widehat{\alpha}) = \sup_G \mathcal{I}_{tr}(\widehat{\alpha};G),
        \qquad \mathcal{I}_{tr}(\widehat{\alpha};G) = \ell(\widehat{\alpha}; G) -  \int_0^T \left< \widehat{\alpha}_t, c(\sigma, m(\widehat{\alpha}_t)) \left(e^{-\sigma \widetilde{G}_t(x)} -1\right)\right>\dd t.
    \end{equation}
Here the supremum is taken over all $G\in C^\infty([0,T]\times V)$, and we recall the notation $\widetilde{G}_t(x) = G_t(x,1) - G_t(x,-1)$. Furthermore, $\ell(\widehat{\alpha};G)$ is a linear functional, defined as follows
    \begin{equation}\label{ell}
        \ell(\widehat{\alpha};G) := \langle \widehat{\alpha}_T, G_T\rangle - \langle \widehat{\alpha}_0, G_0\rangle - \int_0^T \langle \widehat{\alpha}_t, (\partial_t + \sigma \partial_x) G_t\rangle \dd t,
    \end{equation}
and $m(\widehat{\alpha}_t)$ is the magnetization of the measure, defined as 
    \begin{equation}
        m(\widehat{\alpha}_t) = \frac{\langle\widehat{\alpha}_t,\mathds{1}_{\sigma=1} - \mathds{1}_{\sigma=-1}\rangle  }{\langle \widehat{\alpha}_t, 1\rangle}.
    \end{equation}
\begin{Remark}
    In Lemmas \ref{initial ldp good} and \ref{corollary ldp} we derive more explicit expressions of the static part $h_0^\varrho(\widehat{\alpha}_0)$ and the dynamic part $\mathcal{I}_{tr}(\widehat{\alpha})$. Specifically, we identify the functions $\phi$ and $H$ for which $h_0^\varrho(\widehat{\alpha}_0) = h_0^\varrho(\widehat{\alpha}_0;\phi)$ and $\mathcal{I}_{tr}(\widehat{\alpha}) = \mathcal{I}_{tr}(\widehat{\alpha};H)$.
\end{Remark}
In order to prove the large deviation principle with rate function $\mathcal{I}$, we need to establish the upper and lower bound.
\begin{enumerate}[label=\roman*.,font=\itshape]
    \item \textbf{Upper bound}: For every closed set $\mathcal{C} \subset  D([0,T];\mathcal{M}_V)$ we have 
        \begin{equation}\label{upper bound def}
            \uplim_{N\to\infty} \frac{1}{N} \log \mathbb{P}_N^{\varrho}(\pi^N_{\cdot} \in \mathcal{C}) \leq -\inf_{\widehat{\alpha} \in \mathcal{C}} \mathcal{I}^{\varrho}(\widehat{\alpha})
        \end{equation}
    \item \textbf{Lower bound}: For every open set $\mathcal{O} \subset  D([0,T];\mathcal{M}_V)$ we have  
        \begin{equation}\label{lower bound def}
            \lowlim_{N\to\infty} \frac{1}{N} \log \mathbb{P}_N^{\varrho}(\pi^N_{\cdot} \in \mathcal{O}) \geq -\inf_{\widehat{\alpha} \in \mathcal{O}} \mathcal{I}^{\varrho}(\widehat{\alpha})
        \end{equation}
\end{enumerate}

\subsection{Radon-Nikodym derivatives}
The goal of this section is to obtain an explicit form for the Radon-Nikodym derivative of the path-space measure of the weakly perturbed process $\dd\p^{\varrho,H}_N$ with respect to the path-space measure of the original process $\dd \p^{\varrho}_N$. This Radon-Nikodym derivative is given by the so-called exponential martingale of the process, and is given in the following lemma.
\begin{Lemma}
    For all $T>0$, $N\in\mathbb{N}$ and $H \in C^\infty(V)$, we have that 
        \begin{align}\label{exp mart}
            \frac{\dd\p^{\varrho,H}_N}{\dd\p^{\varrho}_N} 
            &= \mathcal{Z}^H_{N,T}(\pi^N_{\cdot})\nonumber\\
            &:= \exp\left(N\langle \pi^N_{T}, H_T\rangle  - N\langle \pi^N_{0}, H_0\rangle - \int_0^T e^{-N\langle \pi^N_{t},H_t\rangle}(\partial_t + \mathscr{L}_N) e^{N\langle \pi^N_{t},H_t\rangle} \dd t \right).
        \end{align}
\end{Lemma}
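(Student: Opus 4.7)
The plan is to recognize $\mathcal{Z}^H_{N,T}(\pi^N_\cdot)$ as the standard Doléans--Dade exponential martingale associated to the time-dependent function
\[
F_t(\eta) := N\langle \pi^N(\eta), H_t\rangle = \sum_{(x,\sigma)\in V_N} \eta(x,\sigma)\, H_t(x/N,\sigma),
\]
and then to identify the tilted measure $\mathcal{Z}^H_{N,T}\cdot \dd\p^\varrho_N$ with $\dd\p^{\varrho,H}_N$ via uniqueness of the martingale problem for $\mathscr{L}^H_{N,t}$.

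The first step is to verify that for any sufficiently smooth $F_t:\Omega_N\to\mathbb{R}$, the process
\[
\mathcal{E}_t := \exp\Big(F_t(\eta_t) - F_0(\eta_0) - \int_0^t e^{-F_s(\eta_s)}(\partial_s + \mathscr{L}_N)e^{F_s(\eta_s)}\,\dd s\Big)
\]
is a mean-one $\p^\varrho_N$-martingale. This is a standard consequence of Dynkin's formula applied to $e^{F_t(\eta_t)}$ followed by Itô's lemma for pure jump processes. Specialising to the $F_t$ above immediately gives $\mathcal{E}_T=\mathcal{Z}^H_{N,T}(\pi^N_\cdot)$, and in particular $\E^\varrho_N[\mathcal{Z}^H_{N,T}]=1$.

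The second step is the explicit computation of $e^{-F_t(\eta)}(\mathscr{L}_N e^{F_t})(\eta)$. Since every jump allowed by $\mathscr{L}_N$ changes exactly one occupation number by $\pm 1$, one has the pointwise identity
\[
e^{F_t(\eta^{(x,\sigma)\to(y,\sigma')})-F_t(\eta)} = e^{H_t(y/N,\sigma')-H_t(x/N,\sigma)},
\]
so reading off the tilted rates term by term and comparing with \eqref{weak asym gen} shows that they coincide precisely with the rates appearing in $\mathscr{L}^H_{N,t}$. A Girsanov-type argument for Markov jump processes (computing $\E^\varrho_N[\mathcal{Z}^H_{N,T}\cdot(f(\eta_T)-f(\eta_0)-\int_0^T\mathscr{L}^H_{N,s}f(\eta_s)\,\dd s)]$ and showing it vanishes) then implies that under $\tilde\p := \mathcal{Z}^H_{N,T}\cdot\dd\p^\varrho_N$ the usual martingale problem for the time-inhomogeneous generator $\mathscr{L}^H_{N,t}$ is satisfied; together with the fact that the initial law is unchanged (as $\mathcal{E}_0=1$), uniqueness of this martingale problem yields $\tilde\p=\p^{\varrho,H}_N$.

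The main technical obstacle is that the active-jump rate $N\eta(x,\sigma)$ is unbounded in $\eta$, so the martingale property and the change of measure need justification beyond the bounded-rate textbook case. The key observation that circumvents this is that the dynamics conserves the total mass $|\eta_\cdot|$; conditioning on $|\eta_0|=n$ therefore reduces everything to a bounded-rate Markov jump process on the finite state space $\{\eta\in\Omega_N : |\eta|=n\}$, where all the above manipulations are standard. Since $|\eta_0|$ has all exponential moments under the product Poisson measure $\mu^\varrho_N$, one removes the conditioning by dominated convergence to complete the proof.
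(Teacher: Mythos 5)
Your proposal is correct and its computational core coincides with the paper's: both arguments reduce to checking that tilting the rates of $\mathscr{L}_N$ by $e^{F_t(\eta')-F_t(\eta)}$ with $F_t(\eta)=N\langle\pi^N(\eta),H_t\rangle$ produces exactly the rates of $\mathscr{L}^H_{N,t}$ in \eqref{weak asym gen}, using that a single jump $(x,\sigma)\to(y,\sigma')$ changes $F_t$ by $H_t(\tfrac{y}{N},\sigma')-H_t(\tfrac{x}{N},\sigma)$. The difference is in how the abstract tilting statement is handled: the paper invokes Palmowski--Rolski \cite{palmowski} as a black box, which says that $\mathcal{Z}^H_{N,T}$ is the Radon--Nikodym derivative of the process generated by $\widetilde{\mathscr{L}}_{N,t}f=e^{-F_t}[\mathscr{L}_N(fe^{F_t})-f\,\mathscr{L}_Ne^{F_t}]$, whereas you propose to prove this from first principles via the martingale problem and a Girsanov-type identity. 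Your route is more self-contained and, usefully, confronts the issue the paper leaves implicit, namely that the jump rates $N\eta(x,\sigma)$ are unbounded on $\Omega_N$; your reduction via conservation of $|\eta|$ to the finite sectors $\{|\eta|=n\}$ is a clean and valid way to stay within the bounded-rate setting (indeed, since the sectors partition the path space and the identity holds on each, the conditioning is removed for free). One small point of precision: to establish the martingale problem for $\mathscr{L}^H_{N,t}$ under the tilted measure it does not suffice to show that the \emph{unconditional} expectation $\E^\varrho_N[\mathcal{Z}^H_{N,T}\cdot(f(\eta_T)-f(\eta_0)-\int_0^T\mathscr{L}^H_{N,s}f(\eta_s)\,\dd s)]$ vanishes; you need the conditional version, i.e.\ that $\mathcal{Z}^H_{N,t}\cdot\bigl(f(\eta_t)-f(\eta_0)-\int_0^t\mathscr{L}^H_{N,s}f(\eta_s)\,\dd s\bigr)$ is a $\p^\varrho_N$-martingale. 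This is the standard computation and follows from the same product-rule manipulation, so it is a matter of stating the right object rather than a gap in the method.
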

\begin{proof}
By Palmowski and Rolski \cite{palmowski}, the exponential martingale $\mathcal Z^H_{N,T}(\pi^N_{\cdot})$ is equal to the Radon-Nikodym derivative $\frac{\dd\widetilde{\p}_N}{\dd\p^{\varrho}_N}$, where $\widetilde{\p}_N$ is the path-space measure  (up to time $T$) of the process corresponding to the time-dependent Markov generator on $\widehat{\varrho}_N$ given by 
    \begin{equation}\label{palrol gen}
        \widetilde{\mathscr{L}}_{N,t}f(\eta) = e^{-N\langle \pi^N(\eta), H_t\rangle} \left[ \mathscr{L}_N\big( f(\eta)\cdot e^{N\langle \pi^N(\eta), H_t\rangle}\big) - f(\eta)\cdot \mathscr{L}_N e^{N\langle \pi^N(\eta), H_t\rangle}\right].
    \end{equation}
where $\pi^N$ now denotes the empirical measure as a function of $\eta \in \widehat{\varrho}_N$, as defined in \eqref{emp meas}. We will prove that $\widetilde{\p}_N = \p_{N}^{\varrho,H}$ by showing that $\widetilde{\mathscr{L}}_{N,t}$ is equal to the generator of the weakly perturbed process $\mathscr{L}_{N,t}^H$ as defined in \eqref{weak asym gen}. We compute, using \eqref{gen}
    \begin{align}\label{palrol1}
        e^{-N\langle \pi^N(\eta), H_t\rangle}&\mathscr{L}_N\big( f(\eta)\cdot e^{N\langle \pi^N(\eta), H_t\rangle}\big)\nonumber\\
        \nn=\ & N\sum_{(x,\sigma)\in V_N} \eta(x,\sigma)\big[f(\eta^{(x,\sigma)\to (x+\sigma,\sigma)})e^{H_t(\frac{x+\sigma}{N},\sigma) - H_t(\frac{x}{N},\sigma)}  - f(\eta)\big]\\
        &   + \sum_{(x,\sigma)\in V_N} c(\sigma,m(\eta)) \eta(x,\sigma)\big[f(\eta^{(x,\sigma)\to(x,-\sigma)})e^{H_t(\frac{x}{N},-\sigma) - H_t(\frac{x}{N},\sigma)}-f(\eta)\big],
    \end{align}
where we used that 
    \begin{equation}
        \langle X_N(\eta^{(x,\sigma)\to(y,\sigma')}), H_t\rangle - \langle X_N(\eta),H_t\rangle = H_t(\tfrac{y}{N},\sigma') - H_t(\tfrac{x}{N},\sigma).
    \end{equation}      
Similarly we compute
    \begin{align}\label{palrol2}
        e^{-N\langle \pi^N(\eta), H_t\rangle}\mathscr{L}_Ne^{N\langle \pi^N(\eta), H_t\rangle}(\eta)
        \nn=\ & N\sum_{(x,\sigma)\in V_N} \eta(x,\sigma)\big[e^{H_t(\frac{x+\sigma}{N},\sigma) - H_t(\frac{x}{N},\sigma)}  -1\big]\\
        &  + \sum_{(x,\sigma)\in V_N} c(\sigma,m(\eta)) \eta(x,\sigma)\big[e^{H_t(\frac{x}{N},-\sigma) - H_t(\frac{x}{N},\sigma)}-1\big].
    \end{align}
Substituting \eqref{palrol1} and \eqref{palrol2}  into \eqref{palrol gen}, we indeed find that $\widetilde{\mathscr{L}}_{N,t}=\mathscr{L}_{N,t}^H$, completing the proof. 
\end{proof}

\begin{Corollary}\label{radon good}
    \begin{equation}
        \frac{1}{N} \log \left(\mathcal{Z}_{N,T}^H(\pi^N_{\cdot}) \right)
        =  \mathcal{I}_{tr}(\pi^N_{\cdot};H)+ \mathcal{O}(\tfrac{1}{N}) .
    \end{equation}
\end{Corollary}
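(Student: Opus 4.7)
The plan is to compute the integrand in the exponent of $\mathcal{Z}^H_{N,T}$ explicitly, match the result with the two pieces of $\mathcal{I}_{tr}(\pi^N_\cdot;H)$, and collect the remaining terms of order $1/N^2$ arising from Taylor expansion of the spatial discretization of $H$. The boundary contributions $\langle\pi^N_T,H_T\rangle-\langle\pi^N_0,H_0\rangle$ together with a $-\int_0^T\langle\pi^N_t,(\partial_t+\sigma\partial_x)H_t\rangle\,\dd t$ will then rebuild $\ell(\pi^N_\cdot;H)$ from \eqref{ell}, while a flip integral will reconstruct the second term of $\mathcal{I}_{tr}$.

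Concretely, after taking logarithms in \eqref{exp mart} and dividing by $N$, the integrand splits into $\partial_t$ and $\mathscr{L}_N$ parts. The $\partial_t$ part is exact: $\frac{1}{N}e^{-N\langle\pi^N_t,H_t\rangle}\partial_t e^{N\langle\pi^N_t,H_t\rangle}=\langle\pi^N_t,\partial_t H_t\rangle$. For the $\mathscr{L}_N$ part I would re-use identity \eqref{palrol2} verbatim, divide by $N$, and substitute $m_N(\eta)=m(\pi^N(\eta))$ from \eqref{mag from emp}; the flip sum then becomes $\bigl\langle\pi^N_t,c(\sigma,m(\pi^N_t))(e^{-\sigma\widetilde H_t(x)}-1)\bigr\rangle$, matching the corresponding piece of $\mathcal{I}_{tr}$ exactly (using $H_t(x/N,-\sigma)-H_t(x/N,\sigma)=-\sigma\widetilde H_t(x/N)$). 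The hopping sum, in turn, becomes $\sum_{(x,\sigma)\in V_N}\eta(x,\sigma)\bigl[e^{H_t(\frac{x+\sigma}{N},\sigma)-H_t(\frac{x}{N},\sigma)}-1\bigr]$, and a second-order Taylor expansion gives
\begin{equation*}
e^{H_t(\frac{x+\sigma}{N},\sigma)-H_t(\frac{x}{N},\sigma)}-1 = \frac{\sigma}{N}\partial_x H_t\bigl(\tfrac{x}{N},\sigma\bigr) + \mathcal{O}(1/N^2),
\end{equation*}
with implicit constant depending only on $\|\partial_x H\|_\infty$ and $\|\partial_x^2 H\|_\infty$. Multiplication by $\eta(x,\sigma)$ and summation produces $\langle\pi^N_t,\sigma\partial_x H_t\rangle$ together with a remainder of order $|\eta^N_t|/N^2$.

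The sole nontrivial point is that this remainder is uniformly $\mathcal{O}(1/N)$ in $t$; this is the main obstacle, but a mild one. Conservation of particles gives $|\eta^N_t|=|\eta^N_0|$ for all $t$, and under the product-Poisson initial law $\mu_N^\varrho$ one has $|\eta^N_0|/N\to\langle\varrho,1\rangle_{L^2(V)}$ almost surely, so $T\cdot C\|H\|_{C^2}|\eta^N_t|/N^2=\mathcal{O}(1/N)$ uniformly in $t\in[0,T]$ with probability tending to one. Assembling the $\partial_t$, hopping and flip contributions with the boundary terms then yields $\mathcal{I}_{tr}(\pi^N_\cdot;H)+\mathcal{O}(1/N)$, completing the proof.
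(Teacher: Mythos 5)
Your proposal is correct and follows essentially the same route as the paper: reuse the explicit formula \eqref{palrol2}, observe that the time-derivative and flip contributions are exact, Taylor-expand the hopping term to produce $\langle\pi^N_t,\sigma\partial_x H_t\rangle$ plus an $\mathcal{O}(|\eta^N|/N^2)$ remainder, and reassemble into $\ell(\pi^N_\cdot;H)$ minus the flip integral. Your explicit control of the remainder via conservation of particles and the Poisson law of large numbers is a slightly more careful version of the paper's implicit assumption that $|\eta^N|=\mathcal{O}(N)$, but it is not a different argument.
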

\begin{proof}
Making use of the following approximation
    \begin{equation}
        e^{H_t(\frac{x+\sigma}{N},\sigma) - H_t(\frac{x}{N},\sigma)}  -1 = \sigma \partial_xH_t(\tfrac{x}{N},\sigma) + \mathcal{O}(\tfrac{1}{N}),
    \end{equation}
we are able to write \eqref{palrol2} as
    \begin{equation}\label{hamiltonian}
        e^{-N\langle \pi^N_{t}, H_t\rangle}\mathscr{L}_Ne^{N\langle \pi^N_{t}, H_t\rangle} = N\langle \pi^N_{t}, \sigma \partial_xH_t\rangle  + N \left\langle \pi^N_{t}, c(\sigma, m(\pi^N_{t})) \left(e^{-\sigma \widetilde{H}_t(x)} -1\right)\right\rangle + \mathcal{O}(1). 
    \end{equation}
By now plugging this into \eqref{exp mart}, we find that 
    \begin{align}
        \mathcal{Z}_{N,T}^H(\pi^N_{\cdot}) 
        &\nn=\exp \left( N\langle \pi^N_{T}, H_T\rangle  - N\langle \pi^N_{0}, H_0\rangle - N\int_0^T \langle \pi^N_{t}, (\partial_t +\sigma \partial_x)H_t\rangle  \dd t + \mathcal{O}(1)\right)\\
        \nn&\ \ \ \ \ \ \ \times \exp\left( - N\int_0^T \left\langle \pi^N_{t}, c(\sigma, m(\pi^N_{t})) \left(e^{-\sigma \widetilde{H}_t(x)} -1\right)\right\rangle\dd t \right)\\
        &= \exp\left( N\ell(\pi^N_{\cdot};H) - N\int_0^T \left\langle \pi^N_{t}, c(\sigma, m(\pi^N_{t})) \left(e^{-\sigma \widetilde{H}_t(x)} -1\right)\right\rangle \dd t+ \mathcal{O}(1) \right),
    \end{align}
finishing the proof.
\end{proof}

\subsection{Upper bound}
In this section we will prove the large deviation upper bound \eqref{upper bound def}. A crucial ingredient is to prove that the path-space measures of $\pi^N_{\cdot}$ are exponentially tight, which then reduces the proof of \eqref{upper bound def} to compact sets.
\begin{Theorem}[Exponential Tightness] \label{exponential tightness}
For any $n\in\mathbb{N}$ there exists a compact set $\mathcal{K}_n\subset D([0,T],\mathcal{M}_V)$ such that 
    \begin{equation}\label{exp tight eq}
        \uplim_{N\to\infty} \frac{1}{N} \log \p^{\varrho}_N(\pi^N_{\cdot} \notin \mathcal{K}_n) = -n.
    \end{equation}
\end{Theorem}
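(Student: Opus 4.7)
The plan is to apply a standard Aldous--Puhalskii exponential tightness criterion: for each $n$ it suffices to exhibit (i) a uniform exponential bound on the total mass, so that the marginals $\pi^N_t$ live in a common compact subset of $\mathcal{M}_V$, and (ii) an exponential modulus of continuity bound for $\langle \pi^N_\cdot,\phi\rangle$ for each $\phi$ in a countable separating family of $C^\infty(V)$ (which is enough since $V$ is compact so $\mathcal{M}_V$ is metrizable under vague convergence and bounded total mass is relatively compact).

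For (i), I would exploit conservation of particles under the generator \eqref{gen}: both jump and flip moves preserve $|\eta|$, so $\langle \pi^N_t,1\rangle = |\eta^N_0|/N$ for all $t$. Under $\mu_N^{\varrho}$, $|\eta^N_0|$ is a sum of independent Poissons with total mean of order $N\langle\varrho,1\rangle_{L^2(V)}$, and Cramér's theorem for Poisson sums yields an $R_n$ with $\p^\varrho_N(|\eta^N_0|/N > R_n) \leq e^{-nN}$ for $N$ large. On the complementary event the marginals lie in the (vaguely) compact set $\{\mu \in \mathcal{M}_V : \langle\mu,1\rangle \leq R_n\}$.

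For (ii), the key tool is the exponential martingale: for every $\phi\in C^\infty(V)$ and $\lambda\in\mathbb{R}$,
\begin{equation*}
Z^{\lambda\phi}_t := \exp\left(\lambda N\langle \pi^N_t,\phi\rangle - \lambda N\langle \pi^N_0,\phi\rangle - \int_0^t e^{-\lambda N\langle \pi^N_s,\phi\rangle}\mathscr{L}_N e^{\lambda N\langle \pi^N_s,\phi\rangle}\,\dd s\right)
\end{equation*}
is a nonnegative $\p^\varrho_N$--martingale. A Taylor expansion as in the proof of Corollary \ref{radon good}, together with the boundedness of $c$, gives
\begin{equation*}
\left|\tfrac{1}{N}\,e^{-\lambda N\langle \pi^N_s,\phi\rangle}\mathscr{L}_N e^{\lambda N\langle \pi^N_s,\phi\rangle}\right| \;\leq\; C(\lambda,\phi)\,\langle \pi^N_s,1\rangle,
\end{equation*}
uniformly in $s$, so on $\{|\eta^N_0|/N \leq R_n\}$ this quantity is bounded by $C(\lambda,\phi)R_n N$. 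Applying Doob's submartingale inequality to $Z^{\lambda\phi}$ and $Z^{-\lambda\phi}$, and optimizing $\lambda$, yields, for any stopping time $\tau \leq T$ and $\delta, \varepsilon > 0$,
\begin{equation*}
\p^\varrho_N\left(\sup_{\theta \in [0,\delta]} |\langle \pi^N_{\tau+\theta} - \pi^N_\tau,\phi\rangle| > \varepsilon,\; |\eta^N_0|/N \leq R_n\right) \;\leq\; 2\exp\!\big(-\lambda N\varepsilon + C(\lambda,\phi) R_n N \delta\big),
\end{equation*}
which letting $\delta\to 0$ gives the Aldous--Puhalskii condition at exponential rate $n$ for each fixed $\phi$.

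Combining (i) with (ii) applied to a countable dense family $\{\phi_k\} \subset C^\infty(V)$ via diagonalization produces, for each $n$, a compact $\mathcal{K}_n \subset D([0,T],\mathcal{M}_V)$ cut out by simultaneously bounding total mass by $R_n$ and bounding the $\phi_k$-oscillation moduli by suitable $\varepsilon_{k,n}$ on intervals of size $\delta_{k,n}$. The main obstacle is not conceptual but book-keeping: one must verify that the joint failure probability still carries rate $n$ after the diagonal construction, and that vague compactness in $\mathcal{M}_V$ together with oscillation control on the countable separating family indeed gives compactness in the Skorokhod topology. Both are standard once the exponential martingale estimate above is in place.
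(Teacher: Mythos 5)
Your proposal is correct, and it reaches the same compact sets (bounded total mass intersected with oscillation control for a countable family of test functions, compactness via the standard characterization of compacts in $D([0,T];\mathcal{M}_V)$), but the route to the oscillation estimate is genuinely different from the paper's. The paper, following the Kipnis--Landim scheme, does not know the invariant measure of the interacting process, so it first compares $\p^{\varrho}_N$ with the non-interacting reference process $\p^{\RTP,\varrho_c}_N$ via H\"older and a second-moment bound on the Radon--Nikodym derivative (Lemma \ref{radon bound}); it then uses the invariance of the product Poisson measure to reduce $\sup_{|s-t|<\delta}$ to a single window $[0,\delta)$ before applying Doob to the exponential martingale. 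You bypass the reference process entirely: your pathwise bound $\frac1N\big|e^{-\lambda N\langle\pi^N_s,\phi\rangle}\mathscr{L}_N e^{\lambda N\langle\pi^N_s,\phi\rangle}\big|\le C(\lambda,\phi)\langle\pi^N_s,1\rangle$ depends only on the boundedness of $c$ and on the \emph{conserved} total mass, so on the $\mathcal{F}_0$-measurable event $\{|\eta^N_0|/N\le R_n\}$ it is uniform over starting times (or stopping times), which is exactly what stationarity buys the paper. This makes your argument shorter and self-contained; the price is that you must either invoke Puhalskii's stopping-time version of the Aldous criterion for exponential tightness, or (more elementarily, and closer to the paper) replace it by the deterministic union bound over the $\lceil T/\delta\rceil$ windows $[k\delta,(k+1)\delta)$, which costs only a subexponential factor and for which your uniform-in-starting-time estimate suffices. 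The remaining book-keeping you flag (diagonalizing over $\phi_k$, $\varepsilon$, $\delta$ and checking that mass bounds plus oscillation control on a dense family give Skorokhod compactness) is carried out in the paper exactly as you sketch it, so no gap there.
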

Before proving  exponential tightness, we first give the proof of the upper bound for compact sets. 

\begin{Theorem}[Upper bound for compact sets] \label{upper bound compact}
    For every compact set $\mathcal{K} \subset  D([0,T];\mathcal{M}_V)$ we have that 
    \begin{equation}
        \uplim_{N\to\infty} \frac{1}{N} \log \mathbb{P}_N^{\varrho}(\pi^N_{\cdot} \in \mathcal{K}) \leq -\inf_{\widehat{\alpha} \in \mathcal{K}} \mathcal{I}^{\varrho}(\widehat{\alpha}).
    \end{equation}
\end{Theorem}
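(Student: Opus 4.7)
The plan is to combine a static tilt of the initial product Poisson measure with the dynamic exponential martingale from Corollary~\ref{radon good}. Fix $\phi\in C^\infty(V)$ and $H\in C^\infty([0,T]\times V)$, and let $\tilde{\mathbb{P}}_N^{\phi,H}$ denote the path-space measure whose initial law is the product Poisson measure $\mu_N^{\varrho e^\phi}$ and whose evolution is driven by the perturbed generator $\mathscr{L}^H_{N,t}$. A routine computation for the ratio of product Poissons, combined with a Riemann-sum approximation, yields
\begin{equation*}
    \frac{\dd\mu_N^{\varrho e^\phi}}{\dd\mu_N^\varrho}(\eta_0) = \exp\Bigl(N\bigl[\langle \pi^N_{0},\phi\rangle - \langle\varrho,e^\phi-1\rangle_{L^2(V)}\bigr] + o(N)\Bigr) = \exp\bigl(Nh_0^\varrho(\pi^N_{0};\phi)+o(N)\bigr),
\end{equation*}
so that, together with Corollary~\ref{radon good},
\begin{equation*}
    \frac{\dd\tilde{\mathbb{P}}_N^{\phi,H}}{\dd\mathbb{P}_N^\varrho} = \exp\Bigl(N\bigl[h_0^\varrho(\pi^N_{0};\phi)+\mathcal{I}_{tr}(\pi^N_{\cdot};H)\bigr]+o(N)\Bigr).
\end{equation*}

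Next I would localize around each point of $\mathcal{K}$. Given $\widehat{\alpha}\in\mathcal{K}$ and $\varepsilon>0$, pick $\phi_{\widehat{\alpha}}, H_{\widehat{\alpha}}$ so that $h_0^\varrho(\widehat{\alpha}_0;\phi_{\widehat{\alpha}}) + \mathcal{I}_{tr}(\widehat{\alpha};H_{\widehat{\alpha}}) \geq \min(\mathcal{I}^\varrho(\widehat{\alpha}),\varepsilon^{-1}) - \varepsilon$, which is possible because $\mathcal{I}^\varrho(\widehat{\alpha})$ decomposes as a sum of two \emph{independent} suprema over $\phi$ and $H$ in \eqref{rateini} and \eqref{dynamic ldp}. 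Using lower semicontinuity of $\widehat{\beta}\mapsto h_0^\varrho(\widehat{\beta}_0;\phi_{\widehat{\alpha}})+\mathcal{I}_{tr}(\widehat{\beta};H_{\widehat{\alpha}})$ on $D([0,T];\mathcal{M}_V)$, select an open neighborhood $U_{\widehat{\alpha}}\ni\widehat{\alpha}$ on which this functional exceeds $\min(\mathcal{I}^\varrho(\widehat{\alpha}),\varepsilon^{-1})-2\varepsilon$. Rewriting
\begin{equation*}
    \mathbb{P}_N^\varrho(\pi^N_{\cdot}\in U_{\widehat{\alpha}}) = \tilde{\mathbb{E}}_N^{\phi_{\widehat{\alpha}},H_{\widehat{\alpha}}}\Bigl[\mathds{1}_{\pi^N_{\cdot}\in U_{\widehat{\alpha}}}\cdot\exp\bigl(-N\bigl[h_0^\varrho(\pi^N_{0};\phi_{\widehat{\alpha}})+\mathcal{I}_{tr}(\pi^N_{\cdot};H_{\widehat{\alpha}})\bigr]+o(N)\bigr)\Bigr]
\end{equation*}
and bounding $\tilde{\mathbb{P}}_N^{\phi_{\widehat{\alpha}},H_{\widehat{\alpha}}}(\pi^N_{\cdot}\in U_{\widehat{\alpha}})\leq 1$ delivers
\begin{equation*}
    \mathbb{P}_N^\varrho(\pi^N_{\cdot}\in U_{\widehat{\alpha}}) \leq \exp\bigl(-N[\min(\mathcal{I}^\varrho(\widehat{\alpha}),\varepsilon^{-1})-2\varepsilon]+o(N)\bigr).
\end{equation*}

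Compactness of $\mathcal{K}$ then produces a finite subcover $\mathcal{K}\subset\bigcup_{j=1}^k U_{\widehat{\alpha}_j}$; combining the union bound with the elementary fact that $\limsup_N N^{-1}\log\sum_{j} a_N^{(j)} = \max_j\limsup_N N^{-1}\log a_N^{(j)}$ gives
\begin{equation*}
    \uplim_{N\to\infty}\frac{1}{N}\log\mathbb{P}_N^\varrho(\pi^N_{\cdot}\in\mathcal{K}) \leq -\inf_{\widehat{\alpha}\in\mathcal{K}}\min\bigl(\mathcal{I}^\varrho(\widehat{\alpha}),\varepsilon^{-1}\bigr) + 2\varepsilon,
\end{equation*}
and letting $\varepsilon\downarrow 0$ concludes the argument.

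The main obstacle I anticipate is the lower semicontinuity of $\widehat{\beta}\mapsto h_0^\varrho(\widehat{\beta}_0;\phi)+\mathcal{I}_{tr}(\widehat{\beta};H)$ on Skorokhod space: the boundary pairings $\langle\widehat{\beta}_0,\cdot\rangle$ and $\langle\widehat{\beta}_T,\cdot\rangle$ hidden inside $h_0^\varrho$ and $\ell$ fail to be continuous at trajectories with jumps at $\{0,T\}$. I would resolve this by exploiting that the jumps of $\pi^N_{\cdot}$ are of size $O(1/N)$, so that exponential tightness (Theorem~\ref{exponential tightness}) can be refined to compacts contained in $C([0,T];\mathcal{M}_V)$, where the endpoint evaluations are continuous and the rest of the functional (an integral against a smooth density) is automatically continuous under weak convergence.
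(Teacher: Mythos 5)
Your proposal is correct, and it reaches the same bound through the same two tilting ingredients as the paper --- the static Poisson tilt $e^{Nh_0^{\varrho}(\pi^N_0;\phi)}$ and the dynamic exponential martingale $\mathcal{Z}^G_{N,T}$ via Corollary~\ref{radon good} --- but it finishes differently. The paper inserts both factors inside a single expectation over $\mathcal{K}$, uses that $\E^{\varrho}_N\bigl[e^{Nh_0^{\varrho}(\pi^N_0;\phi)}\mathcal{Z}^G_{N,T}\bigr]=1$, and then interchanges $\sup_{\phi,G}$ with $\inf_{\widehat{\alpha}\in\mathcal{K}}$ by appealing to the minimax lemma (Lemma 11.3 of Varadhan). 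You instead choose near-optimal $(\phi_{\widehat{\alpha}},H_{\widehat{\alpha}})$ pointwise, cover $\mathcal{K}$ by finitely many neighborhoods, and apply a union bound; since Varadhan's minimax lemma is itself proved by exactly this covering argument, the two routes are essentially equivalent, and yours has the merit of making explicit the hypotheses the minimax step silently uses, namely lower semicontinuity of $\widehat{\beta}\mapsto h_0^{\varrho}(\widehat{\beta}_0;\phi)+\mathcal{I}_{tr}(\widehat{\beta};G)$ for fixed test functions and uniformity of the error in Corollary~\ref{radon good}. Two small remarks. First, your worry about the endpoint pairings is unfounded: in the $J_1$ topology on the compact interval $[0,T]$ the admissible time changes fix $0$ and $T$, so $\widehat{\beta}\mapsto\widehat{\beta}_0$ and $\widehat{\beta}\mapsto\widehat{\beta}_T$ are continuous; the genuine (mild) issue is rather the continuity of $t\mapsto m(\widehat{\beta}_t)$ in the nonlinear term near the zero measure, which is harmless because the integrand is dominated by $C\langle\widehat{\beta}_t,1\rangle$. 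Second, the $o(N)$ correction in your Radon--Nikodym identity is really $\mathcal{O}(1)$ times the total particle mass along the trajectory, so to pull it out uniformly you should work on $U_{\widehat{\alpha}}\cap\mathcal{K}$ (or intersect with a mass-bounded set $\mathcal{E}_K$); compact subsets of $D([0,T];\mathcal{M}_V)$ have uniformly bounded total mass, so this costs nothing --- and the paper's proof tacitly relies on the same uniformity when it passes to the limit inside $\inf_{\widehat{\alpha}\in\mathcal{K}}$.
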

\begin{proof}
We start with the following computation
    \begin{align}\label{upper bound 1}
        \frac{1}{N} \log \mathbb{P}_N^{\varrho}(\pi^N_{\cdot} \in \mathcal{K})
        &= \frac{1}{N}\log \E_N^{\varrho}\left[\mathds{1}_{\pi^N_{\cdot} \in \mathcal{K}} \cdot \frac{e^{Nh_0^{\varrho}(\pi^N_{0};\phi)}}{e^{Nh_0^{\varrho}(\pi^N_{0};\phi)}}\cdot \frac{\mathcal{Z}^G_{N,T}(\pi^N_{\cdot})}{\mathcal{Z}^G_{N,T}(\pi^N_{\cdot})} \right]\\
        &\nn\leq -\inf_{\widehat{\alpha}\in\mathcal{K}} \frac{1}{N}\log \left[ e^{Nh_0^{\varrho}(\widehat{\alpha}_0;\phi)}\cdot \mathcal{Z}_{N,T}^G(\widehat{\alpha}) \right] + \frac{1}{N}\log \E^{\varrho}_N\left[ e^{Nh_0^{\varrho}(\pi^N_{0};\phi)}\cdot \mathcal{Z}^G_{N,T}(\pi^N_{\cdot}) \right],
    \end{align}
where we recall the definition of $h_0^\varrho$ in \eqref{rateini} and of $\mathcal{Z}^G_{N,T}$ in \eqref{exp mart}. Since $\mathcal{Z}^G_{N,T}(\pi^N_{\cdot})$ is a martingale with $\mathcal{Z}^G_{N,0}(\pi^N_{\cdot})=1$, we actually find that 
    \begin{align}
        \E^{\varrho}_N\left[e^{Nh_0^{\varrho}(\pi^N_{0};\phi)} \cdot \mathcal{Z}^G_{N,T}(\pi^N_{\cdot})\right] = \E_{\mu^{\varrho}_N}\left[e^{Nh_0^{\varrho}(\pi^N_{0};\phi)}\right] = \E_{\mu^{\varrho}_N}\left[e^{N\left(\langle \pi^N_{0},\phi\rangle - \langle\varrho,e^\phi-1\rangle_{L^2(V)}\right)}\right] = 1,
    \end{align}
where we used that $\mu^{\varrho}_N$ is a product Poisson distribution. Therefore the second term in \eqref{upper bound 1} vanishes. For the first term, note that we took $\phi$ and $G$ arbitrarily, so we have
    \begin{equation}
        \uplim_{N\to\infty} \frac{1}{N} \log \mathbb{P}_N^{\varrho}(\pi^N_{\cdot} \in \mathcal{K})
        \leq - \sup_{\phi,G}  \uplim_{N\to\infty} \inf_{\widehat{\alpha} \in \mathcal{K}}  \frac{1}{N}\log \left[ e^{Nh_0^{\varrho}(\widehat{\alpha}_0;\phi)}\cdot \mathcal{Z}_{N,T}^G(\widehat{\alpha}) \right] 
    \end{equation}
using Corollary \ref{radon good}, we find that 
    \begin{align}
        \nn- \sup_{\phi,G}  \uplim_{N\to\infty} \inf_{\widehat{\alpha}\in\mathcal{K}} \frac{1}{N}\log \left[ e^{Nh_0^{\varrho}(\widehat{\alpha}_0;\phi)}\cdot \mathcal{Z}_{N,T}^G(\widehat{\alpha}) \right]
        &= -\sup_{\phi,G} \inf_{\widehat{\alpha}\in\mathcal{K}} h_0^{\varrho}(\widehat{\alpha}_0;\phi) + \mathcal{I}_{tr}(\widehat{\alpha};G) \\
        &= -\inf_{\widehat{\alpha} \in \mathcal{K}}\mathcal{I}^{\varrho}(\widehat{\alpha}),
    \end{align}
where we were able to interchange the supremum over $\phi$ and $G$ with the infimum over $\widehat{\alpha}$ using the argument of Lemma 11.3 in \cite{varadhan1984large}, using that $\mathcal{K}$ is compact.  
\end{proof}

 In order to prove the exponential tightness, we want to use the method used in \cite[pages 271-273]{KipnisLandim}. However, since we do not know the invariant measure of this system, we first turn to a perturbed model of which we do know the invariant measures, namely the independent run-and-tumble particle system, as defined in \cite{Hidde-Fluctuations} (without diffusive jumps). The generator of this process corresponds to setting $c(\sigma,m) \equiv 1$ in the generator $\mathscr{L}_N$ defined in \eqref{gen}, and we will denote it by $\mathscr{L}_N^{\RTP}$. For this process, we know that the Product Poisson measures with constant density $\varrho_c$ are invariant. We denote the path-space measure of this process, started from $\mu^\varrho_N$, by $\p^{\RTP,\varrho}_{N}$. 
    \begin{Lemma}\label{radon bound}
    There exists a constant $C>0$ such that
        \begin{equation}\label{radon rtp}
            \E_{\p^{\RTP,\varrho}_N}\left[\left(\frac{\dd\p^{\varrho}_N}{\dd \p^{\RTP,\varrho_c}_N} \right)^2\right]\leq e^{CN}.
        \end{equation}
    \end{Lemma}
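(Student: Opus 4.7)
The plan is to decompose the Radon--Nikodym derivative into an initial piece coming from the change of starting measure and a dynamical (Girsanov) piece coming from the change of flip rates, bound each deterministically in terms of the conserved total mass $|\eta_0|$ and the number of flip events, and then exploit the Poisson product structure of $\mu^{\varrho_c}_N$ to get the $e^{CN}$ bound.

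First, I would observe that $\p^{\varrho}_N$ and $\p^{\RTP,\varrho_c}_N$ share the same active jump rates $N\eta(x,\sigma)$; they differ only in the initial distribution ($\mu^{\varrho}_N$ vs.\ $\mu^{\varrho_c}_N$) and in the flip rates ($\eta(x,\sigma) c(\sigma,m_N(\eta))$ vs.\ $\eta(x,\sigma)$). The standard Girsanov formula for pure jump Markov processes therefore gives
\[
    \frac{\dd\p^{\varrho}_N}{\dd\p^{\RTP,\varrho_c}_N} \;=\; \frac{\dd\mu^{\varrho}_N}{\dd\mu^{\varrho_c}_N}(\eta_0) \;\cdot\; \prod_{k:\, T_k \le T} c\big(\sigma_{T_k}, m_N(\eta_{T_k-})\big) \;\cdot\; \exp\!\left(\int_0^T \sum_{(x,\sigma)\in V_N} \eta_t(x,\sigma)\,\big(1 - c(\sigma,m_N(\eta_t))\big)\,\dd t \right),
\]
where the product runs over flip times $T_k$. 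Using the conservation $\sum_{(x,\sigma)} \eta_t(x,\sigma) = |\eta_0|$ and the assumed bounds $0 < c_{\min} \le c \le c_{\max}$, the dynamical factor is bounded deterministically by
\[
    c_{\max}^{K_T} \cdot \exp\!\big(T |\eta_0|\, \max(|1-c_{\min}|,|c_{\max}-1|)\big),
\]
with $K_T$ denoting the total number of flips in $[0,T]$.

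Next I would compute the conditional expectation given $\eta_0$ under the RTP law. Since under $\p^{\RTP,\varrho_c}_N$ each particle flips independently at rate $1$ and the particle number is conserved, conditionally on $\eta_0$ the variable $K_T$ is Poisson with parameter $T|\eta_0|$, so $\E_{\p^{\RTP,\varrho_c}_N}\!\big[c_{\max}^{2K_T} \,\big|\,\eta_0\big] = \exp\!\big(T|\eta_0|(c_{\max}^2-1)\big)$. Combining this with the deterministic bound above, there is a constant $C_1>0$ such that the squared dynamical factor satisfies
\[
    \E_{\p^{\RTP,\varrho_c}_N}\!\left[(D^{\text{dyn}}_N)^2 \,\big|\,\eta_0\right] \le e^{C_1 |\eta_0|}.
\]

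Finally I would integrate over $\eta_0 \sim \mu^{\varrho_c}_N$ (or equivalently $\mu^{\varrho}_N$, depending on the exact interpretation of $\p^{\RTP,\varrho}_N$ in the statement; the two reduce to one another by another application of the same initial Radon--Nikodym derivative). Since $\mu^{\varrho_c}_N$ is a product of Poisson measures and the squared initial Radon--Nikodym derivative factorizes over sites, I am left with a product over $(x,\sigma)\in V_N$ of one-site expectations of the form $\E_{\mathrm{Pois}(\varrho_c)}\!\big[(\varrho(x/N,\sigma)/\varrho_c)^{2\eta} e^{C_1 \eta}\big]$, each of which is evaluated using the Poisson moment generating function $\E_{\mathrm{Pois}(\lambda)}[e^{t\eta}] = \exp(\lambda(e^t-1))$ and is uniformly bounded in $N$ thanks to the boundedness of $\varrho$ and strict positivity of $\varrho_c$. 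Taking the product over the $|V_N|=2N$ sites yields the announced bound $e^{CN}$. The main obstacle, and essentially the only step that requires care, is the clean bookkeeping for the Girsanov formula of a pure jump process with random but bounded flip rates; the mean-field magnetization dependence is harmless because the bounds on $c$ are uniform in $m$, so all estimates factor cleanly into a Poisson computation on the initial configuration and a Poisson computation on the flip-event count.
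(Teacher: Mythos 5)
Your proposal is correct and follows essentially the same route as the paper: factor the Radon--Nikodym derivative into the initial (Poisson product) part and the dynamical Girsanov part, bound the latter by $c_{\max}^{K_T}$ times an exponential in $T|\eta_0|$, use that the total flip count is conditionally Poisson with intensity $|\eta_0|$ under the RTP law, and finish with a product-Poisson exponential moment over the $2N$ sites. The only (immaterial) difference is that the paper first bounds the initial Radon--Nikodym derivative deterministically by $\exp(\log(\|\varrho\|_\infty/\varrho_c)|\eta|+2\varrho_c N)$ and then takes a single exponential moment of $|\eta|$, whereas you keep the site-by-site factorization to the end.
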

\begin{proof}
    Note that we can split the Radon-Nikodym derivative in the following way
        \begin{equation}
            \frac{\dd\p^{\varrho}_N}{\dd \p^{\RTP,\varrho_c}_N}  
            = \frac{\dd\mu^{\varrho}_N}{\dd \mu^{\varrho_c}_N}\cdot \frac{\dd\p^{\varrho}_N}{\dd \p^{\RTP,\varrho}_N}. 
        \end{equation}
    The Radon-Nikodym derivative of the Poisson distributions can be bounded in the following way
        \begin{align}\label{radonmu}
            \frac{\dd\mu^{\varrho}_N}{\dd \mu^{\varrho_c}_N}(\eta) 
            &\nn= \exp\left( \sum_{(x,\sigma)\in V_N} \eta(x,\sigma) \log\left(\frac{\varrho(\tfrac{x}{N},\sigma)}{\varrho_c}\right) - \sum_{(x,\sigma)\in V_N} (\varrho(\tfrac{x}{N},\sigma) - \varrho_c)\right)\\
            &\leq \exp\left(\log\left( \frac{||\varrho||_\infty}{\varrho_c}\right)|\eta^N| + 2\varrho_c N \right).
        \end{align}
    Since the jump rates of the processes corresponding to $\p^{\RTP,\varrho}_N$ and $\p^{\varrho}_N$ only differ at the internal state jumps, by the Girsanov formula, we find that the Radon-Nikodym derivative is given by 
        \begin{align}
            \frac{\dd\p^{\varrho}_N}{\dd \p^{\RTP,\varrho}_N}  
            &= \exp\left(\sum_{(x,\sigma) \in V_N} \int_0^T \log\big(c(\sigma,m_{N}(t))\big)\dd J_t^{(x,\sigma) \to (x,-\sigma)} - \sum_{(x,\sigma) \in V_N} \int_0^T \eta^N_t(x,\sigma)\big( c(\sigma, m_N(t)) - 1 \big) \dd t\right),
        \end{align}
    where  $J_t^{(x,\sigma) \to (x,-\sigma)}$ is the number of jumps made from $(x,\sigma)$ to $(x,-\sigma)$ up to time $t$. Since $c(\sigma,m_N(t))$ is bounded from above and below, we can find constants $c_1, c_2>0$ such that 
        \begin{align}\label{radonP}
            \left(\frac{\dd\p^{\varrho}_N}{\dd \p^{\RTP,\varrho}_N} \right)^2
            \leq \exp\left(c_1\sum_{(x,\sigma)\in V_N} J_T^{(x,\sigma) \to (x,-\sigma)} + c_2 T|\eta^N| \right),
        \end{align}
    where we recall that $|\eta^N|$ is the total number of particles in the configuration $\eta^N$. Note that $\sum_{(x,\sigma)\in V_N} J_T^{(x,\sigma) \to (x,-\sigma)}$ is the total number of internal state jumps up to time $T$, which under $\p^{\RTP,\varrho}_N$ is a Poisson process with intensity $|\eta^N|$. Combining \eqref{radonmu} and \eqref{radonP}, we can find a constant $c_3>0$ such that 
        \begin{align}
            \E_{\p^{\RTP,\varrho}_N}\left[\left(\frac{\dd\p^{\varrho}_N}{\dd \p^{\RTP,\varrho_c}_N}\right)^2 \right]
            &\nn\leq e^{4\varrho_c N}\E_{\p^{\RTP,\varrho}_N}\left[\exp\left(c_3T |\eta^N|\right) \right]\\
            &\leq \exp\left(4\varrho_cN+ 2||\varrho||_\infty \left(e^{c_3T}-1\right) N \right),
        \end{align}
    hence \eqref{radon rtp} holds.
\end{proof}
Now we can proceed with the proof of exponential tightness, using the approach from \cite{KipnisLandim}. We will need the following result.
    \begin{Lemma}\label{cont cond}
        For every $\varepsilon>0$ and $G \in C^\infty(V)$ 
            \begin{equation}\label{Arzela Ascoli}
                \lim_{\delta\to0} \lim_{N\to\infty} \frac{1}{N}\log \p^{\varrho}_N\left(\sup_{|s-t|<\delta} \left| \left<\pi^N_{t}, G\right> - \left<\pi^N_{s}, G\right> \right| \geq \varepsilon \right)= -\infty.
            \end{equation}
    \end{Lemma}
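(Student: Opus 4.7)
The plan is to transfer the estimate from $\p^\varrho_N$ to the reference measure $\p^{\RTP,\varrho_c}_N$ via Cauchy--Schwarz and the $L^2$-bound of Lemma \ref{radon bound}, and then to carry out a standard exponential-martingale plus Doob argument under the independent run-and-tumble dynamics, where particle independence and invariance of the product Poisson measure make the required estimates tractable.

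First, for an arbitrary measurable event $A \subset D([0,T];\mathcal{M}_V)$, Cauchy--Schwarz together with Lemma \ref{radon bound} gives
\begin{equation*}
\p^{\varrho}_N(A) \;=\; \E_{\p^{\RTP,\varrho_c}_N}\!\!\left[\mathds{1}_A\cdot \tfrac{\dd\p^{\varrho}_N}{\dd\p^{\RTP,\varrho_c}_N}\right] \;\leq\; e^{CN/2}\sqrt{\p^{\RTP,\varrho_c}_N(A)}.
\end{equation*}
Taking $\tfrac{1}{N}\log$, the prefactor $e^{CN/2}$ contributes only a finite additive constant in the limit $N\to\infty$, so it suffices to prove \eqref{Arzela Ascoli} with $\p^{\varrho}_N$ replaced by $\p^{\RTP,\varrho_c}_N$. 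Under this reference measure I would apply Dynkin's formula to write, for every $G\in C^\infty(V)$,
\begin{equation*}
\langle \pi^N_t,G\rangle - \langle \pi^N_s,G\rangle = \int_s^t \mathscr{L}^{\RTP}_N\langle \pi^N_r,G\rangle\,\dd r + \bigl(M^N_t(G)-M^N_s(G)\bigr),
\end{equation*}
with $M^N(G)$ a martingale. A direct computation, using smoothness of $G$, gives $|\mathscr{L}^{\RTP}_N\langle \pi^N_r,G\rangle|\leq C(G)\cdot |\eta^N|/N$; since $|\eta^N|$ is conserved and distributed at time zero as a Poisson variable of intensity $2N\varrho_c$, a Cram\'er bound yields $|\eta^N|/N \leq 3\varrho_c$ on an event of probability $1-e^{-\Omega(N)}$, on which the drift contribution to $|\langle \pi^N_t,G\rangle-\langle\pi^N_s,G\rangle|$ is at most $C'(G)\delta < \varepsilon/2$ once $\delta$ is small.

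For the martingale part I would use the exponential supermartingale: for each $\theta>0$,
\begin{equation*}
Z^{G,\theta}_u := \exp\!\left(\theta N\bigl(M^N_u(G)-M^N_s(G)\bigr) - \int_s^u \Phi^{N,\theta}(G,\pi^N_r)\,\dd r\right),\qquad u\ge s,
\end{equation*}
is a supermartingale with $Z^{G,\theta}_s=1$, where $\Phi^{N,\theta}$ is the explicit compensator summing $(\mathrm{rate})\cdot(e^{\theta N\cdot(\mathrm{jump\ increment})}-1-\theta N\cdot(\mathrm{jump\ increment}))$ over all active-jump and flip channels. Smoothness of $G$ provides a $1/N$ discrete gradient in every active-jump increment, so that $\Phi^{N,\theta}(G,\pi^N_r)\leq C''(\theta,G)\cdot|\eta^N|$ uniformly in $r$; on the good event above this integrates to $\int_s^t \Phi^{N,\theta}\,\dd r\leq C''(\theta,G)\delta N$. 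Doob's maximal inequality then yields
\begin{equation*}
\p^{\RTP,\varrho_c}_N\!\!\left(\sup_{s\le u\le t}|M^N_u(G)-M^N_s(G)|\ge \tfrac{\varepsilon}{2}\right) \;\le\; 2\,e^{-\theta N\varepsilon/2 \,+\, C''(\theta,G)\delta N}.
\end{equation*}
Covering $[0,T]$ by $\lceil T/\delta\rceil$ intervals of length $\delta$ adds, via a union bound, a factor $T/\delta$ that is negligible after $\tfrac{1}{N}\log$, so sending first $N\to\infty$, then $\delta\to 0$, and finally $\theta\to\infty$ forces the upper bound to $-\infty$.

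The main obstacle is the scaling of the exponential compensator: because active jumps occur at the accelerated rate $N$ while there are $O(N)$ particles, a naive application of Doob's inequality to $\exp(\theta N(\cdots))$ would produce a compensator of order $N^2$ that ruins the argument. Smoothness of $G$ supplies a discrete gradient of size $1/N$, and the quadratic bound $|e^u-1-u|\leq \tfrac12 u^2 e^{|u|}$ then gives a compensator of order $N\cdot(1/N)^2=1/N$ per particle per active-jump channel, which combines with the jump rate $N$ and the particle count $O(N)$ to yield a bounded-per-unit-time compensator. Once this cancellation is secured, the rest of the argument is routine.
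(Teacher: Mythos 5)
Your strategy coincides with the paper's: reduce to the independent run-and-tumble reference measure $\p^{\RTP,\varrho_c}_N$ via Cauchy--Schwarz and Lemma \ref{radon bound}, then run an exponential-martingale/Doob argument under that measure, with a union bound over $O(T/\delta)$ time windows (the paper additionally uses stationarity of $\mu^{\varrho_c}_N$ to reduce to a single window, but your union bound costs only a sub-exponential factor). Your diagnosis of the compensator scaling is also correct --- the $1/N$ discrete gradient is exactly what makes the active-jump contribution to \eqref{hamiltonian} of order $|\eta^N|$ rather than $N|\eta^N|$ --- and your order of limits ($N\to\infty$, then $\delta\to0$, then $\theta\to\infty$) properly absorbs the $\theta$-dependence $e^{\theta\|\widetilde{G}\|_\infty}$ of the flip-channel compensator.

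There is, however, a genuine gap in how you dispose of the particle-number fluctuations. You restrict to the good event $\{|\eta^N|\leq 3\varrho_c N\}$, whose complement has probability $e^{-cN}$ for a \emph{fixed} constant $c>0$ (the Cram\'er rate of a Poisson variable of mean $2\varrho_c N$ at the fixed threshold $3\varrho_c N$). Your final estimate therefore has the form
\begin{equation*}
\p^{\RTP,\varrho_c}_N\Big(\sup_{|s-t|<\delta}|\cdots|\geq\varepsilon\Big)\;\leq\; e^{-cN}\;+\;\tfrac{CT}{\delta}\,e^{-\theta N\varepsilon/2+C''(\theta,G)\delta N},
\end{equation*}
and after taking $\tfrac1N\log$ and the limits $N\to\infty$, $\delta\to0$, $\theta\to\infty$, this yields $\max(-c,-\infty)=-c$, a finite number, not $-\infty$. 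The fix is to let the cutoff diverge: either take $\{|\eta^N|\leq K N\}$ with $K=K(\delta)\to\infty$ chosen so that $K\delta\to0$ (then the Poisson rate $c(K)\to\infty$ while the compensator and drift bounds $C''(\theta,G)K\delta$ and $C(G)K\delta$ still vanish), or apply the tail bound directly to the event that the compensator integral exceeds $\theta N\varepsilon/4$, which forces $|\eta^N|\gtrsim N/\delta$ and hence has a Poisson-tail exponent diverging as $\delta\to0$; this is the mechanism behind the paper's estimate \eqref{rtp 2}. With that modification your proof goes through.
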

\begin{proof}
Let $\varepsilon>0$ be given. First note that we have the following 
    \begin{align}
        \p^{\varrho}_N\left(\sup_{|s-t|<\delta} \left| \left<\pi^N_{t}, G\right> - \left<\pi^N_{s}, G\right> \right| \geq \varepsilon \right)
        \nn&\leq \p^{\varrho}_N\left(\sup_{|s-t|<\delta}  \left<\pi^N_{t}, G\right> - \left<\pi^N_{s}, G\right>  \geq \varepsilon \right)\\
        &\qquad +\p^{\varrho}_N\left(\sup_{|s-t|<\delta}  \left<\pi^N_{t}, -G\right> - \left<\pi^N_{s}, -G\right>  \geq \varepsilon \right).
    \end{align}
Since we are considering every $G\in C^\infty(V)$, we can neglect the absolute value in \eqref{Arzela Ascoli}.
Furthermore, by H\"older's inequality we have that for a general event $A$
    \begin{equation}
        \p^{\varrho}_N(A) 
        = \E_{\p^{\RTP,\varrho_c}_N}\left[\mathds{1}_A \frac{\dd\p^{\varrho}_N}{\dd\p^{\RTP,\varrho_c}_N}\right] 
        \leq \left(\p^{\RTP,\varrho_c}_N(A)\right)^{\frac{1}{2}} \left(\E_{\p^{\RTP,\varrho}_N}\left[\left(\frac{\dd\p^{\varrho}_N}{\dd \p^{\RTP,\varrho_c}_N}\right)^2 \right]\right)^{\frac{1}{2}}.
    \end{equation}
Therefore, by Lemma \ref{radon bound}, it is enough to prove the result for $\p^{\RTP,\varrho_c}_N$.

Using the following inclusion.
    \begin{equation}\label{contain}
        \left\{ \sup_{|s-t|<\delta} \langle\pi^N_{t}, G\rangle - \langle\pi^N_{s}, G\rangle \geq \varepsilon\right\} \subset \bigcup_{k=0}^{[T\delta^{-1}]} \left\{\sup_{0\leq t<\delta } \langle\pi^N_{k\delta + t}, G\rangle - \langle\pi^N_{k\delta}, G\rangle >\frac{\varepsilon}{4}\right\},
    \end{equation}   
we are able to find that 
    \begin{align}
        &\nn\uplim_{N\to\infty} \frac{1}{N}\log \p^{\RTP,\varrho_c}_N\left(\sup_{|s-t|<\delta} \langle\pi^N_{t}, G\rangle - \langle\pi^N_{s}, G\rangle  \geq \varepsilon \right)\\
        \nn&\qquad \qquad \leq  \uplim_{N\to\infty} \frac{1}{N}\log \max_{k=0}^{[T\delta^{-1}]} \p^{\RTP,\varrho_c}_N\left(\sup_{0\leq t<\delta } \langle\pi^N_{k\delta + t}, G\rangle - \langle\pi^N_{k\delta}, G\rangle  \geq \tfrac{1}{4}\varepsilon \right)\\
        &\qquad \qquad = \uplim_{N\to\infty} \frac{1}{N}\log  \p^{\RTP,\varrho_c}_N\left(\sup_{0\leq t<\delta }  \langle\pi^N_{t}, G\rangle - \langle\pi^N_{0}, G\rangle \geq \tfrac{1}{4}\varepsilon \right),
    \end{align}
where in the last step we used  that $\mu^{\varrho_c}_N$ is invariant for the RTP system. 
By denoting the exponential martingale corresponding to $\mathscr{L}^{\RTP}_N$ as $\mathcal{Z}^{\RTP, G}_{N,t}(\pi^N_{\cdot})$ (recall the definition of the exponential martingale in \eqref{exp mart}), and multiplying both sides by a constant $\lambda>0$, we find that
    \begin{align}
        &\nn\p^{\RTP,\varrho_c}_N\left(\sup_{0\leq t<\delta }  \langle\pi^N_{ t}, \lambda G\rangle - \langle\pi^N_{0}, \lambda G\rangle\geq \tfrac{1}{4}\lambda \varepsilon \right)\\
        &\nn \qquad \leq \p^{\RTP,\varrho_c}_N\left(\sup_{0\leq t<\delta } \frac{1}{N}\log \mathcal{Z}^{\RTP, \lambda G}_{N,t}(\pi^N_{\cdot}) + \frac{1}{N}\int_{0}^{t} e^{-N\langle \pi^N_{s},\lambda G_s\rangle}(\partial_s + \mathscr{L}^{\RTP}_N) e^{N\langle \pi^N_{s},\lambda G_s\rangle} \dd s \geq \tfrac{1}{4}\lambda \varepsilon \right)\\
        \nn&\qquad \leq \p^{\RTP,\varrho_c}_N\left(\sup_{0\leq t<\delta } \mathcal{Z}^{\RTP,\lambda G}_{N,t}(\pi^N_{\cdot}) \geq e^{\frac{1}{8}N\lambda \varepsilon}\right)\\
        &\qquad \qquad \qquad +\p^{\RTP,\varrho_c}_N\left( \sup_{0\leq t < \delta}\frac{1}{N}\int_{0}^{ t} e^{-N\langle \pi^N_{s},\lambda G_s\rangle}(\partial_s + \mathscr{L}^{\RTP}_N) e^{N\langle \pi^N_{s},\lambda G_s\rangle} \dd s \geq \tfrac{1}{8}\lambda \varepsilon \right).
    \end{align}
Recalling that $\mathcal{Z}^{\RTP,\lambda G}_{N,t}(\pi^N_{\cdot})$ is a non-negative martingale, by  Doob's martingale inequality we can upper bound the first part by 
    \begin{equation}\label{rtp 1}
        \p^{\RTP,\varrho_c}_N\left(\sup_{0\leq t<\delta } \mathcal{Z}^{\RTP,\lambda G}_{N,t}(\pi^N_{\cdot}) \geq e^{\frac{1}{8}N\lambda \varepsilon}\right) \leq  \E^{\RTP,\varrho_c}_N\left[ \mathcal{Z}^{\RTP,\lambda G}_{N,\delta}\right]\cdot e^{-\frac{1}{8}N\lambda \varepsilon}  = e^{-\frac{1}{8}N\lambda \varepsilon}. 
    \end{equation}
For the second part, using \eqref{hamiltonian}, we are able to find the following upper bound for the integrand
    \begin{align}
        \frac{1}{N} e^{-N\langle \pi^N_{s},\lambda G_s\rangle}(\partial_s + \mathscr{L}^{\RTP}_N) e^{N\langle \pi^N_{s},\lambda G_s\rangle}
        \leq \frac{1}{N}|\eta^N|\left( \lambda ||\partial_x G||_\infty  + M\left(e^{\lambda ||\widetilde{G}||_\infty}-1\right)\right).
    \end{align}
Therefore, using that $|\eta^N|$ is Poisson distributed with parameter $N\varrho_c$ under $\p^{\RTP,\varrho_c}_N$, we find that by the Markov inequality,
    \begin{align}\label{rtp 2}
        &\p^{\RTP,\varrho_c}_N\left( \sup_{0\leq t < \delta}\frac{1}{N}\int_{0}^{ t} e^{-N\langle \pi^N_{s},\lambda G_s\rangle}(\partial_s + \mathscr{L}^{\RTP}_N) e^{N\langle \pi^N_{s},\lambda G_s\rangle} \dd s \geq \tfrac{1}{8}\lambda \varepsilon \right) = \mathcal{O}(\delta).
    \end{align}
Combining \eqref{rtp 1} and \eqref{rtp 2}, we find that 
    \begin{equation}
        \lim_{\delta\to0} \lim_{N\to\infty} \frac{1}{N}\log \p^{\RTP,\varrho_c}_N\left(\sup_{|s-t|<\delta} \left| \left<\pi^N_{t}, G\right> - \left<\pi^N_{s}, G\right> \right| \geq \varepsilon \right)\leq -\frac{1}{8}\lambda \varepsilon,
    \end{equation}
and since we can take $\lambda$ arbitrarily large, this concludes the proof.
\end{proof}

We are now ready to give a proof of the exponential tightness.
\begin{proof}[Proof of Theorem \ref{exponential tightness}]
We start by defining the following set 
    \begin{equation}
        \mathcal{E}_K = \left \{\widehat{\alpha} \in D([0,T];\mathcal{M}_V): \sup_{t\in[0,T]} \widehat{\alpha}_t(V) \leq K\right \}.
    \end{equation}
For this set, by the Chernoff inequality, we find that  
    \begin{equation}\label{tip 1}
        \p^{\varrho}_N(\pi^N_{\cdot} \notin \mathcal{E}_K) = \p^{\varrho}_N\left(\sup_{t\in[0,T]} \pi^N_{t}(V) > K\right) \leq e^{-NK} \E^{\varrho}_N \left[\exp\left(\sup_{t\in[0,T]} N\pi^N_{t}(V)\right) \right],
    \end{equation}
where the expectation can be upper bounded in the following way
    \begin{align}\label{tip 2}
        \E^{\varrho}_N \left[\exp\left(\sup_{t\in[0,T]} N\pi^N_{t}(V)\right) \right] = \E^{\varrho}_N \left[\exp\left(|\eta^N|\right)\right] \leq e^{N||\varrho||_\infty(e-1)}.
    \end{align}
Combining \eqref{tip 1} and \eqref{tip 2}, we can find a sequence of numbers $(K_n)_{n\in\mathbb{N}}$ such that for every $n\in\mathbb{N}$
    \begin{equation}
        \uplim_{N\to\infty} \frac{1}{N}\log\p^{\varrho}_N(\pi^N_{\cdot} \notin \mathcal{E}_{K_n}) \leq -n
    \end{equation}

Next we consider a countable uniformly dense family $\{\phi_j\}_{j\in\mathbb{N}} \subset C^\infty(V)$ and define for each $\delta>0$ and $\varepsilon>0$ the following set
    \begin{equation}
        \mathcal{C}_{j,\delta,\varepsilon} = \left\{\widehat{\alpha} \in D([0,T];\mathcal{M}_V) : \sup_{|t-s|<\delta} \left|\langle \widehat{\alpha}_t, \phi_j\rangle - \langle \widehat{\alpha}_s,\phi_j\rangle\right| \leq \varepsilon \right\}.
    \end{equation}
Note that for any choice of the parameters the set $\mathcal{C}_{j,\delta,\varepsilon}$ is closed, and by Lemma \ref{cont cond} there exists a $\delta = \delta(j,m,n)$ such that 
    \begin{equation}
        \p^\varrho_N(\pi^N_{\cdot} \notin \mathcal{C}_{j,\delta,1/m} ) \leq \exp(-Nnmj)
    \end{equation}
for large enough $N$, and so 
    \begin{equation}
        \p^\varrho_N\left(\pi^N_{\cdot} \notin \bigcap_{j\geq 1, m\geq 1} \mathcal{C}_{j,\delta(j,m,n),1/m}  \right) \leq \sum_{j\geq 1, m\geq 1} \exp(-Nnmj) \leq C \exp(-Nn),
    \end{equation}
for some constant $C>0$. By now considering the set
    \begin{equation}
        \mathcal{K}_n =  \mathcal{E}_K \cap \bigcap_{j\geq 1, m\geq 1} \mathcal{C}_{j,\delta(j,m,n),1/m},
    \end{equation}
it follows that \eqref{exp tight eq} holds for this choice of $\mathcal{K}_n$. Since we furthermore know that it is closed we only need to show that it is relatively compact, which can be done by proving the following two things \cite[Proposition 4.1.2]{KipnisLandim}:
    \begin{enumerate}
        \item\label{rel comp 1} $\{\widehat{\alpha}_t : \widehat{\alpha} \in \mathcal{K}_n, t\in[0,T]\}$ is relatively compact in $\mathcal{M}_V$. 
        \item\label{rel comp 2} $\lim_{\delta\to 0} \sup_{\widehat{\alpha} \in \mathcal{K}_n} w_\delta(\widehat{\alpha}) = 0$, where 
            \begin{equation}
                w_\delta(\widehat{\alpha} ) := \sup_{|t-s|\leq \delta} \sum_{k=1}^\infty \frac{1}{2^k}\left( 1\wedge |\langle \widehat{\alpha}_t, \phi_j\rangle - \langle \widehat{\alpha}_s, \phi_j \rangle |\right)=0.
            \end{equation}
    \end{enumerate}
Here item \ref{rel comp 1}. is satisfied since $\mathcal{K}_n \subset \mathcal{E}_{K_n}$  and closed balls are compact in $\mathcal{M}_V$, and \ref{rel comp 2}. follows from the definition of the sets $C_{j,\delta,\varepsilon}$.
\end{proof}

\subsection{Lower bound}
In this section we will prove the large deviation lower bound, as given in \eqref{lower bound def}. The main idea is to show that if $\mathcal{I}^{\varrho}(\widehat{\alpha})<\infty$,  then there exists a function $H$ such that 
    \begin{equation}
        \mathcal{I}^{\varrho}(\widehat{\alpha}) = \lim_{N\to\infty} \frac{1}{N}\E^{\widehat{\alpha}_0}_N \left[\frac{\dd \p^{\widehat{\alpha}_0,H}_N}{\dd \p^{\varrho}_N}\right].
    \end{equation}
To achieve this, we have to show two things. First we have to show that if $h^{\varrho}_0(\widehat{\alpha}_0)<\infty$, then $\widehat{\alpha}_0$ has a density $\widehat{\varrho}_0$, and $h^{\varrho}_0$ can be written as the relative entropy of product Poisson distributions with the respective densities $\varrho$ and $\widehat{\varrho}_0$. After that, we have to show that if $\mathcal{I}_{tr}(\widehat{\alpha})<\infty$, then there exists a measurable $H:[0,T]\times V \to \mathbb{R}$ such that $\widehat{\alpha}$ satisfies the hydrodynamic equation of the weakly perturbed model, given in \eqref{weak-PDE}, and that $H$ is then a function for which the supremum in the definition of $\mathcal{I}_{tr}$ in \eqref{dynamic ldp} is attained.

The first step follows from the following Lemma
\begin{Lemma}\label{initial ldp good}
    If $h_0^{\varrho}(\widehat{\alpha}_0)<\infty$, then $\widehat{\alpha}_0$ has a density $\widehat{\varrho}_0:V\to\mathbb{R}$, and 
        \begin{equation}\label{static lemma}
            h_0^{\varrho}(\widehat{\alpha}_0) = \lim_{N\to\infty} \frac{1}{N}\E_{\mu^{\widehat{\varrho}_0}_N}\left[\log\frac{\dd \mu^{\widehat{\varrho}_0}_N}{\dd \mu^{\varrho}_N}\right].
        \end{equation}
\end{Lemma}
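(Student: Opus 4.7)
The plan is to exploit the Legendre-transform structure of $h_0^\varrho$ in \eqref{rateini}, in three steps: (a) finiteness of $h_0^\varrho(\widehat{\alpha}_0)$ forces absolute continuity of $\widehat{\alpha}_0$ with respect to Lebesgue on $V$; (b) compute $h_0^\varrho(\widehat{\alpha}_0)$ explicitly when $\widehat{\alpha}_0$ has a density $\widehat{\varrho}_0$; (c) recognize the explicit formula as the $N\to\infty$ limit of the Poisson relative entropy.

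For (a), Lebesgue-decompose $\widehat{\alpha}_0 = \widehat{\varrho}_0\,\dd x + \widehat{\alpha}_0^s$ and suppose $\widehat{\alpha}_0^s\neq 0$ for contradiction. Choose a compact $K\subset V$ with $|K|=0$ but $\widehat{\alpha}_0^s(K)>0$, and for every $\varepsilon>0$ an open $U_\varepsilon\supset K$ with Lebesgue measure at most $\varepsilon$. A smooth bump $\phi$ equal to $M$ near $K$, vanishing outside $U_\varepsilon$, and with range $[0,M]$ gives
\begin{equation*}
h_0^\varrho(\widehat{\alpha}_0;\phi)\ \geq\ M\,\widehat{\alpha}_0^s(K)-\|\varrho\|_\infty(e^M-1)\varepsilon.
\end{equation*}
Sending $\varepsilon\to0$ and then $M\to\infty$ contradicts $h_0^\varrho(\widehat{\alpha}_0)<\infty$; hence $\widehat{\alpha}_0 = \widehat{\varrho}_0\,\dd x$ with $\widehat{\varrho}_0\in L^1(V)$.

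For (b), apply the pointwise Legendre--Young inequality $\phi u - v(e^\phi-1)\leq u\log(u/v)-u+v$ for $u,v\geq 0$, with equality at $\phi=\log(u/v)$. Taking $u=\widehat{\varrho}_0$, $v=\varrho$ and integrating produces the uniform bound
\begin{equation*}
h_0^\varrho(\widehat{\alpha}_0;\phi)\ \leq\ F(\widehat{\varrho}_0)\ :=\ \sum_{\sigma\in S}\int_{\mathbb{T}}\bigl[\widehat{\varrho}_0\log(\widehat{\varrho}_0/\varrho)-\widehat{\varrho}_0+\varrho\bigr]\,\dd x
\end{equation*}
for every $\phi\in C^\infty(V)$. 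For the matching lower bound, approximate the formal optimizer $\phi^\star=\log(\widehat{\varrho}_0/\varrho)$ by a smooth sequence obtained via truncating at $\pm M$, regularizing by $(\widehat{\varrho}_0+\delta)/(\varrho+\delta)$, and mollifying; then pass to the limits $\delta\to 0$ and $M\to\infty$ by monotone and dominated convergence. The needed integrability $\widehat{\varrho}_0|\log \widehat{\varrho}_0|\in L^1(V)$ is extracted from $h_0^\varrho(\widehat{\alpha}_0)<\infty$ together with the upper and lower bounds on $\varrho$, yielding $h_0^\varrho(\widehat{\alpha}_0)=F(\widehat{\varrho}_0)$.

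For (c), the product structure of $\mu^{\widehat{\varrho}_0}_N$ and $\mu^\varrho_N$ combined with the standard Poisson relative-entropy identity $H(\mathrm{Pois}(a)\|\mathrm{Pois}(b))=a\log(a/b)-a+b$ yields
\begin{equation*}
\frac{1}{N}\E_{\mu^{\widehat{\varrho}_0}_N}\!\Bigl[\log\tfrac{\dd\mu^{\widehat{\varrho}_0}_N}{\dd\mu^\varrho_N}\Bigr]=\frac{1}{N}\!\!\sum_{(x,\sigma)\in V_N}\!\Bigl[\widehat{\varrho}_0(\tfrac{x}{N},\sigma)\log\tfrac{\widehat{\varrho}_0(x/N,\sigma)}{\varrho(x/N,\sigma)}-\widehat{\varrho}_0(\tfrac{x}{N},\sigma)+\varrho(\tfrac{x}{N},\sigma)\Bigr],
\end{equation*}
a Riemann sum for $F(\widehat{\varrho}_0)$. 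Taking $N\to\infty$ (first for continuous $\widehat{\varrho}_0$, and then extending to general $L^1$ densities by $L^1$-approximation and the variational bound from (b)) gives the claimed equality $\lim_N\frac{1}{N}\E_{\mu^{\widehat{\varrho}_0}_N}[\log\dd\mu^{\widehat{\varrho}_0}_N/\dd\mu^\varrho_N]=F(\widehat{\varrho}_0)=h_0^\varrho(\widehat{\alpha}_0)$. The main obstacle throughout is the low regularity of $\widehat{\varrho}_0$: both the substitution $\phi^\star=\log(\widehat{\varrho}_0/\varrho)$ in (b) and the Riemann-sum convergence in (c) require careful truncation--mollification, and the essential input that makes every limit passage rigorous is the integrability $\widehat{\varrho}_0\log\widehat{\varrho}_0\in L^1(V)$ deduced from $h_0^\varrho(\widehat{\alpha}_0)<\infty$.
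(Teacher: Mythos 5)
Your proposal is correct and follows essentially the same route as the paper: ruling out a singular part of $\widehat{\alpha}_0$ by testing against functions concentrated on a Lebesgue-null set, evaluating the supremum in \eqref{rateini} at the optimizer $\phi=\log(\widehat{\varrho}_0/\varrho)$, and identifying the resulting functional with the $N\to\infty$ limit of the product-Poisson relative entropy. You supply more detail than the paper on the truncation/mollification needed to justify substituting the non-smooth optimizer, but the underlying argument is the same.
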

\begin{proof}
    Assume $h_0^{\varrho}(\widehat{\alpha}_0)<\infty$ If $\widehat{\alpha}_0$ is not absolutely continuous, then there exists a Borel set $A \subset V$ such that $\widehat{\alpha}_0(A) > 0$ and $\lambda_V(A)=0$, with $\lambda_V$ the Lebesgue measure on $V$. By the definition of $h_0^\varrho(\widehat{\alpha}_0)$ in \eqref{rateini}, we have that for $\phi \in C^\infty(V)$
        \begin{equation}\label{gamma0upp}
             \langle \widehat{\alpha}_0, \phi \rangle \leq h_0^\varrho(\widehat{\alpha}_0) + \langle \varrho, e^\phi -1 \rangle_{L^2(V)}.
        \end{equation}
    For $n\in\mathbb{N}$ now take a sequence $(\phi^{(n)}_k)_{k\in\mathbb{N}} \subset C^\infty(V)$ such that $\phi^{(n)}_k \to n\mathds{1}_A$ pointwise as $k\to\infty$. It then follows that
        \begin{equation}
            \langle \widehat{\alpha}_0, \phi^{(n)}_k \rangle\to n\widehat{\alpha}_0(A), \qquad \langle \varrho, e^{\phi^{(n)}_k} -1 \rangle_{L^2(V)} \to 0,
        \end{equation} 
    as $k\to\infty$. By taking $n$ large enough this contradicts \eqref{gamma0upp},
    hence we can conclude that  $\widehat{\alpha}_0$ has a density $\widehat{\varrho}_0$.

    The rest of the proof of \eqref{static lemma} follows then from calculating the supremum.
    \begin{align}
        \nn h^{\varrho}_0(\widehat{\alpha}_0) 
        &= \sup_\phi \left\{\langle \widehat{\varrho}_0, \phi\rangle_{L^2(V)} - \langle \varrho, e^\phi -1 \rangle_{L^2(V)}\right\}\\
        \nn&= \langle \widehat{\varrho}_0, \log(\tfrac{\widehat{\varrho}_0}{\varrho})\rangle_{L^2(V)} - \langle \widehat{\varrho}_0 - \varrho, 1\rangle_{L^2(V)}\\
        &= \lim_{N\to\infty} \frac{1}{N}\E_{\mu^{\widehat{\varrho}_0}_N}\left[\log\frac{\dd \mu^{\widehat{\varrho}_0}_N}{ \dd \mu^{\varrho}_N}\right].
    \end{align}
    where the supremum is attained for $\phi = \log(\frac{\widehat{\varrho}_0}{\varrho})$.
\end{proof}
By a similar argument as in the previous lemma, we can show that if $\mathcal{I}_{tr}(\widehat{\alpha})<\infty$ then there exists a density $\widehat{\varrho}: [0,T]\times V\to\mathbb{R}$ for the whole trajectory $\widehat{\alpha}$. In the following Lemma we prove an alternative formula for the dynamic part of the rate function in the case that $\widehat{\varrho}_t(v)>0$ for all $t\in[0,T], v\in V$.
\begin{Lemma}\label{corollary ldp}
     If $\mathcal{I}_{tr}(\widehat{\alpha})<\infty$ and $\widehat{\varrho}>0$ then there exists a bounded measurable function $H$ such that $\widehat{\alpha}$ satisfies the equation \eqref{weak-PDE} in the weak sense. Furthermore, $\mathcal{I}_{tr}(\widehat{\alpha}) = \mathcal{I}_{tr}(\widehat{\alpha};H)$ and
        \begin{equation}\label{exact formula rate}
            \mathcal{I}_{tr}(\widehat{\alpha}) = \int_0^T  \left\langle \widehat{\alpha}_t, \left( e^{-\sigma  \widetilde{H}_t(x)}(-\sigma  \widetilde{H}_t(x) -1) +1\right) c(\sigma, m(\widehat{\alpha}_t))\right\rangle \dd t.
        \end{equation}
\end{Lemma}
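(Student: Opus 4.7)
The plan is to characterize the maximizing $H$ in the variational representation \eqref{dynamic ldp} by a first-order analysis, identify the PDE satisfied by $\widehat{\varrho}$ as the associated Euler--Lagrange equation, and then evaluate the resulting expression in closed form.

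The key structural observation is that $G\mapsto\mathcal{I}_{tr}(\widehat{\alpha};G)$ is concave: $\ell(\widehat{\alpha};G)$ is linear in $G$, while $G\mapsto -\int_0^T\langle\widehat{\alpha}_t,c(\sigma,m(\widehat{\alpha}_t))(e^{-\sigma\widetilde{G}_t}-1)\rangle \dd t$ is concave by convexity of the exponential (and positivity of $c$ and $\widehat{\alpha}_t$). Computing the G\^ateaux derivative in direction $\phi\in C^\infty([0,T]\times V)$ gives
\[
\tfrac{d}{d\varepsilon}\bigg|_{\varepsilon=0}\mathcal{I}_{tr}(\widehat{\alpha};G+\varepsilon\phi) = \ell(\widehat{\alpha};\phi) + \int_0^T\bigl\langle\widehat{\alpha}_t,\sigma\widetilde{\phi}_t\,c(\sigma,m(\widehat{\alpha}_t))\,e^{-\sigma\widetilde{G}_t}\bigr\rangle\,\dd t.
\]
Setting this to zero at the optimizer $H$ for all smooth $\phi$ (and using an argument analogous to Lemma \ref{initial ldp good} applied in a time-integrated sense to produce a density $\widehat{\varrho}$ of $\widehat{\alpha}$) yields, after grouping the coefficients of $\phi_t(x,\pm1)$, precisely the weak form of \eqref{weak-PDE}.

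To construct $H$ explicitly, I use the assumption $\widehat{\varrho}>0$: once the drift $A:=(\partial_t+\partial_x)\widehat{\varrho}_t(x,1)$ is known to be essentially bounded, the $\sigma=1$ component of the weak PDE becomes a quadratic $By^2-Ay-C=0$ in $y=e^{\widetilde{H}_t(x)}$, with $B=c(-1,m_t)\widehat{\varrho}_t(x,-1)>0$ and $C=c(1,m_t)\widehat{\varrho}_t(x,1)>0$. Its unique positive root yields a bounded measurable $\widetilde{H}_t(x)$, from which $H$ is defined, e.g.\ by $H_t(x,\sigma)=\tfrac{1}{2}\sigma\widetilde{H}_t(x)$ (only $\widetilde{H}$ enters both the PDE and $\mathcal{I}_{tr}$). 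By concavity of $\mathcal{I}_{tr}(\widehat{\alpha};\cdot)$, vanishing of the G\^ateaux derivative at $H$ forces $H$ to be a global maximum; a mollification argument extends the supremum over smooth $G$ to this measurable $H$, giving $\mathcal{I}_{tr}(\widehat{\alpha})=\mathcal{I}_{tr}(\widehat{\alpha};H)$.

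Finally, to derive \eqref{exact formula rate}, I use the PDE to eliminate $\ell(\widehat{\alpha};H)$. Integration by parts in $t$ (and in $x$, using periodic boundary conditions) rewrites
\[
\ell(\widehat{\alpha};H) = \int_0^T\bigl\langle(\partial_t+\sigma\partial_x)\widehat{\varrho}_t, H_t\bigr\rangle\,\dd t.
\]
Substituting the right-hand side of \eqref{weak-PDE}, reindexing $\sigma\to -\sigma$ in the gain term, and collapsing $H_t(x,-\sigma)-H_t(x,\sigma)=-\sigma\widetilde{H}_t(x)$ collects everything into $\ell(\widehat{\alpha};H)=-\int_0^T\langle\widehat{\alpha}_t,\sigma\widetilde{H}_t(x)\,c(\sigma,m_t)\,e^{-\sigma\widetilde{H}_t(x)}\rangle \dd t$. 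Plugging this into the definition of $\mathcal{I}_{tr}(\widehat{\alpha};H)$ and combining with the second term of \eqref{dynamic ldp} reproduces \eqref{exact formula rate} after elementary simplification.

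The main obstacle is the rigorous passage from the formal first-order calculation to the existence of a bounded measurable $H$: one must verify that $\phi\mapsto\ell(\widehat{\alpha};\phi)$ extends to a continuous linear functional on the weighted space induced by the second term of $\mathcal{I}_{tr}$ and invoke a Riesz-type representation, or equivalently show that $(\partial_t+\sigma\partial_x)\widehat{\varrho}_t$ is essentially bounded so that the pointwise quadratic for $\widetilde{H}_t(x)$ admits a bounded root. Both routes depend crucially on the assumption $\widehat{\varrho}>0$ and on the finiteness of $\mathcal{I}_{tr}(\widehat{\alpha})$, which also delivers enough integrability to justify the integration by parts used throughout.
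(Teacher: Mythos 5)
Your second half (constructing $\widetilde{H}$ as the positive root of the quadratic, checking it is a global maximizer, and eliminating $\ell(\widehat{\alpha};H)$ via the PDE to obtain \eqref{exact formula rate}) matches the paper's argument; the paper verifies optimality by reducing the supremum over $G$ to a pointwise concave optimization in $p=\widetilde{G}_t(x)$ with critical point $p=\widetilde{H}_t(x)$, which is the same concavity mechanism you invoke. The derivation of the closed-form expression is correct.

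The genuine gap is in the first half. Your Euler--Lagrange computation presupposes that the supremum in \eqref{dynamic ldp} is \emph{attained} at some $G=H$ at which you may differentiate; but the supremum ranges over smooth $G$ and the eventual optimizer is only bounded measurable, so there is nothing a priori to differentiate at. You correctly name the fix at the end (extend $\ell(\widehat{\alpha};\cdot)$ to a bounded linear functional and apply a Riesz-type representation), but you present it as an open obstacle rather than carrying it out --- and this step is the actual content of the lemma's first assertion. The paper's route is: finiteness of $\mathcal{I}_{tr}(\widehat{\alpha})$ bounds $\sup_{\|G\|_\infty\le 1}\ell(\widehat{\alpha};G)$ (since the exponential term is uniformly bounded on that ball), Hahn--Banach plus Riesz give a representing measure $\nu$ with density $g$, and $\widehat{\alpha}$ weakly solves $\dot{\widehat{\alpha}}_t=-\sigma\partial_x\widehat{\alpha}_t+g_t$. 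You also omit a step you actually need: to solve \emph{both} $\sigma$-components of \eqref{weak-PDE} with a single $\widetilde{H}$, the source must be antisymmetric, $g_t(x,\sigma)=\sigma f_t(x)$; the paper proves the symmetric part $h$ vanishes by testing with $G_t(x,1)=G_t(x,-1)$, for which the exponential term drops out and a nonzero $h$ would make the supremum infinite. Without this, your quadratic for the $\sigma=1$ component gives a $\widetilde{H}$ that need not satisfy the $\sigma=-1$ equation (the identity $\Psi(x,1)\Psi(x,-1)=1$ only closes the system once antisymmetry of the source is known). Finally, a minor conflation: the density whose existence requires a Lemma \ref{initial ldp good}-type argument here is that of the representing measure $\nu$ of $\ell$, not that of $\widehat{\alpha}$ itself, which is already hypothesized via $\widehat{\varrho}>0$.
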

\begin{proof}
By the definition of $\mathcal{I}_{tr}$ in \eqref{dynamic ldp}, we have the following 
    \begin{equation}
        \sup_{\substack{G \in C^\infty([0,T]\times V)\\||G||_\infty\leq 1}} \left\{\ell(\widehat{\alpha};G) - \int_0^T \left\langle \widehat{\alpha}_t, c(\sigma,m(\widehat{\alpha}_t))\left( e^{-\sigma \widetilde{G}_t(x)}-1\right)\right\rangle \dd t\right\}\leq \mathcal{I}_{tr}(\widehat{\alpha}),
    \end{equation}
and so
    \begin{equation}
         \sup_{\substack{G \in C^\infty([0,T]\times V)\\||G||_\infty\leq 1}} \ell(\widehat{\alpha};G) \leq \mathcal{I}_{tr}(\widehat{\alpha}) + \int_0^T \left\langle \widehat{\alpha}_t, c(\sigma,m(\widehat{\alpha}_t))\left( e-1\right)\right\rangle \dd t < \infty.
    \end{equation}
Consequently, by the Hahn-Banach theorem, we can extend the linear functional $\ell(\widehat{\alpha};\cdot)$ to a bounded linear functional in $C([0,T]\times V)$. Therefore, by the Riesz representation theorem, there exists a signed measure $\nu \in \mathcal{M}_{[0,T]\times V}$ such that 
    \begin{equation}
        \ell(\widehat{\alpha}; G) = \langle \nu, G\rangle := \int_{[0,T]\times V} G d\nu.
    \end{equation}
Again, since we assume that $\mathcal{I}_{tr}(\widehat{\alpha})<\infty$,  this measure $\nu$ has a density $g:[0,T]\times V \to \mathbb{R}$. By then plugging in the definition of $\ell(\widehat{\alpha};G)$ in \eqref{ell}, we find that $\widehat{\alpha}$ satisfies
    \begin{equation}
         \langle \widehat{\alpha}_T, G_T\rangle - \langle \widehat{\alpha}_0, G_0\rangle - \int_0^T \langle \widehat{\alpha}_t, (\partial_t + \sigma \partial_x) G_t\rangle \dd t = \int_0^T \langle g_t, G_t \rangle_{L^2(V)} \dd t,
    \end{equation}
i.e., it satisfies the following PDE in the weak sense
    \begin{equation}
        \dot{\widehat{\alpha}}_t(x,\sigma) = -\sigma \partial_x \widehat{\alpha}_t(x,\sigma) + g_t(x,\sigma).
    \end{equation}
We can now split up  $g_t(x,\sigma) = \sigma f_t(x) + h_t(x)$, and we will show that $h\equiv 0$ almost everywhere. To see this, note that 
    \begin{align}
        \mathcal{I}_{tr}(\widehat{\alpha})
        &\nn= \sup_G \left\{\int_0^T \langle \sigma f_t, G_t\rangle_{L^2(V)} \dd t + \int_0^T \langle h_t, G_t \rangle_{L^2(V)} \dd t -\int_0^T \left\langle \widehat{\alpha}_t, c(\sigma,m(\widehat{\alpha}_t))\left( e^{-\sigma \widetilde{G}_t(x)}-1\right)\right\rangle \dd t \right\}\\
        &= \sup_{\widetilde{G},\overline{G}} \left\{ \int_0^T \langle f_t, \widetilde{G}_t\rangle_{L^2(\mathbb{T})} \dd t + \int_0^T \langle h_t, \overline{G}_t \rangle_{L^2(\mathbb{T})} \dd t -\int_0^T \left\langle \widehat{\alpha}_t, c(\sigma,m(\widehat{\alpha}_t))\left( e^{-\sigma \widetilde{G}_t(x)}-1\right)\right\rangle \dd t\right\},
    \end{align}
where  $\overline{G}_t(x) = G_t(x,1) + G_t(x,-1)$. By considering functions where $G_t(x,1) = G_t(x,-1)$, it follows that if $h\not\equiv 0$ almost everywhere then the last supremum is infinite, which contradicts $\mathcal{I}^{\varrho}(\widehat{\alpha})<\infty$. Therefore, 
    \begin{equation}\label{flux}
        \dot{\widehat{\alpha}}_t(x,\sigma) = -\sigma \partial_x \widehat{\alpha}_t(x,\sigma) + \sigma f_t(x)
    \end{equation}
holds weakly, with $f:[0,T]\times \mathbb{T}\to\mathbb{R}$ some bounded measurable function.

Setting $\Psi_t(x,\sigma) = e^{-\sigma \widetilde{H}_t(x)}$ in \eqref{weak-PDE}, we solve the following equation for $\Psi_t$, 
\begin{equation}\label{WA Psi}
\sigma f_t(x) =  \frac{1}{\Psi_t(x,\sigma)}  c(-\sigma, m(\widehat{\alpha}_t))\widehat{\varrho}_t(x,-\sigma) -\Psi_t(x,\sigma) c(\sigma, m(\widehat{\alpha}_t))\widehat{\varrho}_t(x,\sigma)
\end{equation}
This is a quadratic equation in $\Psi_t$, with the following positive and bounded solution
\begin{align}\label{Psi}
\Psi_t(x,\sigma) = \frac{-\sigma f_t(x) + \sqrt{f_t(x)^2 + 4\widehat{\varrho}_t(x,\sigma) c(\sigma,m(\widehat{\alpha}_t))\widehat{\varrho}_t(x,-\sigma) c(-\sigma,m(\widehat{\alpha}_t))}}{2\widehat{\varrho}_t(x,\sigma) c(\sigma,m(\widehat{\alpha}_t))}.
\end{align}
It is a straightforward calculation to show that $\Psi(x,\sigma) \cdot \Psi(x,-\sigma) = 1$, and so \eqref{weak-PDE} holds for $\widetilde{H}_t(x) = -\log(\Psi(x,1))$. 

Now, assuming that $\widehat{\alpha}$ satisfies \eqref{weak-PDE} we find that 
 \begin{align}
    \mathcal{I}_{tr}(\widehat{\alpha})
    &\nn= \sup_G\left\{\ell(\widehat{\alpha};G) - \int_0^T \langle \widehat{\alpha}_t, \left(e^{-\sigma\widetilde{G}_t(x)}-1\right)c(\sigma, m(\widehat{\alpha}_t)) \rangle \dd t \right\}\\
    \nn&= \sup_G \left\{\int_0^T \left\langle  \widehat{\alpha}_t, \left(-e^{-\sigma \widetilde{H}_t(x)} \sigma \widetilde{G}_t(x)- e^{-\sigma \widetilde{G}_t(x)} + 1 \right) c(\sigma, m(\widehat{\alpha}_t))\right\rangle \dd t\right\} \\
    &= \int_0^T \Big\langle \widehat{\alpha}_t, \sup_{p\in\mathbb{R}} \left(-e^{-\sigma \widetilde{H}_t(x)} \sigma p- e^{-\sigma p} + 1 \right) c(\sigma, m(\widehat{\alpha}_t))\Big\rangle \dd t,
    \end{align}
where we can interchange the supremum in the last integral by dominated convergence. This supremum is attained for $p=\widetilde{H}_t(x)$, indeed showing  that $\mathcal{I}_{tr}(\widehat{\alpha}) = \mathcal{I}_{tr}(\widehat{\alpha};H)$. After filling this in, it follows that \eqref{exact formula rate} holds. 
\end{proof}

We now have a clear formulation of the rate function when $\widehat{\alpha}$ has positive density, however if the density can be zero then the formulation in \eqref{Psi} is not well-defined. Furthermore, in order for the hydrodynamic limit of the weakly perturbed model to hold in Theorem \ref{hydro theorem}, we need to assume that $H \in C^{1,0}([0,T]\times V)$. We therefore define the following space 
    \begin{equation}
        D_0([0,T];\mathcal{M}_V) := \left\{\widehat{\alpha} \in D([0,T];\mathcal{M}_V): \widehat{\varrho} >0 , \text{ $\widehat{\alpha}$ satisfies \eqref{weak-PDE} with $H\in C^{1,0}([0,T]\times V)$} \right\}
    \end{equation}
We will now show that the rate function of trajectories outside this set can be approximated by the rate function of trajectories within this set.

\begin{Lemma}\label{approx rate lemma}
    Let $\widehat{\alpha} \in D([0,T];\mathcal{M}_V)$ such $\mathcal{I}_{tr}(\widehat{\alpha})<\infty$, then there exists a sequence $(\widehat{\alpha}_k)_{k\in\mathbb{N}} \subset D_0([0,T];\mathcal{M}_V)$ such that $\widehat{\alpha}_k \to \widehat{\alpha}$ weakly and 
        \begin{equation}\label{approx rate}
            \mathcal{I}_{tr}(\widehat{\alpha}) = \lim_{k\to\infty} \mathcal{I}_{tr}(\widehat{\alpha}_k).
        \end{equation}
\end{Lemma}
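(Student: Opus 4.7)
The plan is to approximate $\widehat\alpha$ by a sequence $\widehat\alpha^{(k)}$ constructed via space-time mollification of the density and flux, supplemented by a small additive shift to ensure strict positivity, and to read off a smooth potential $H^{(k)}$ from the quadratic formula \eqref{Psi}. By the argument in the first half of Lemma \ref{corollary ldp} (up to equation \eqref{flux}), finiteness of $\mathcal{I}_{tr}(\widehat\alpha)$ implies that $\widehat\alpha$ has a density $\widehat\varrho \in L^{\infty}([0,T] \times V)$ and that there exists a bounded measurable $f:[0,T]\times\mathbb{T}\to\mathbb{R}$ for which $\widehat\varrho$ satisfies \eqref{flux} in the weak sense; strict positivity of $\widehat\varrho$ was only used afterwards to invert \eqref{WA Psi} for $H$, so this provides workable initial data for the regularization.

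Fix a smooth nonnegative mollifier $\phi_k$ on $\mathbb{R}\times\mathbb{T}$ of unit mass, supported near the origin and periodic in $x$. Extend $\widehat\varrho$ and $f$ beyond $[0,T]$ by reflection and set
\[
\widehat\varrho^{(k)}_t(x,\sigma) := \bigl(\widehat\varrho(\cdot,\cdot,\sigma) * \phi_k\bigr)(t,x) + \tfrac{1}{k}, \qquad f^{(k)} := f * \phi_k.
\]
Since convolution commutes with $\partial_t+\sigma\partial_x$ and the constant $\tfrac{1}{k}$ is in its kernel, $\widehat\varrho^{(k)}$ satisfies \eqref{flux} classically with source $\sigma f^{(k)}$; both $\widehat\varrho^{(k)}$ and $f^{(k)}$ are smooth and uniformly bounded, and $\widehat\varrho^{(k)} \geq 1/k > 0$. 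Substituting $(\widehat\varrho^{(k)},f^{(k)})$ into \eqref{Psi} yields a smooth strictly positive $\Psi^{(k)}$ satisfying $\Psi^{(k)}_t(x,1)\Psi^{(k)}_t(x,-1)=1$; put $\widetilde H^{(k)}_t(x):=-\log\Psi^{(k)}_t(x,1)$ and $H^{(k)}_t(x,\sigma):=-\tfrac{\sigma}{2}\widetilde H^{(k)}_t(x)\in C^{1,0}([0,T]\times V)$. By construction, $\widehat\varrho^{(k)}$ satisfies \eqref{weak-PDE} with $H^{(k)}$, so $\widehat\alpha^{(k)}\in D_0([0,T];\mathcal{M}_V)$. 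Standard mollifier estimates give $\widehat\varrho^{(k)}\to\widehat\varrho$ in $L^1$ and $f^{(k)}\to f$ a.e.\ along a subsequence, hence $\widehat\alpha^{(k)}\to\widehat\alpha$ weakly in $D([0,T];\mathcal{M}_V)$ and $m(\widehat\alpha^{(k)}_t)\to m(\widehat\alpha_t)$ pointwise in $t$.

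For the convergence of rate functions we rewrite \eqref{exact formula rate} with $\Psi=e^{-\sigma\widetilde H}$,
\[
\mathcal{I}_{tr}(\widehat\alpha) = \int_0^T\sum_{\sigma\in S}\int_{\mathbb{T}} \widehat\varrho_t(x,\sigma)\,c(\sigma,m(\widehat\alpha_t))\bigl(\Psi_t(x,\sigma)\log\Psi_t(x,\sigma) - \Psi_t(x,\sigma) + 1\bigr)\,dx\,dt,
\]
and analogously for $\widehat\alpha^{(k)}$. The lower bound $\mathcal{I}_{tr}(\widehat\alpha)\le\liminf_k\mathcal{I}_{tr}(\widehat\alpha^{(k)})$ is immediate from the lower semicontinuity of $\mathcal{I}_{tr}$ as a supremum of continuous functionals of $\widehat\alpha$. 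The main obstacle is the matching upper bound $\limsup_k\mathcal{I}_{tr}(\widehat\alpha^{(k)})\le\mathcal{I}_{tr}(\widehat\alpha)$: the integrand is nominally singular where $\widehat\varrho$ vanishes, since $\Psi$ carries $\widehat\varrho$ in its denominator. The key observation is that, via \eqref{Psi}, the product $\widehat\varrho c\Psi$ equals the bounded continuous expression $\tfrac12\bigl(-\sigma f+\sqrt{f^2+4\widehat\varrho(\cdot,\sigma)c(\sigma,m)\widehat\varrho(\cdot,-\sigma)c(-\sigma,m)}\bigr)$, with no $\widehat\varrho$ in the denominator. Writing the integrand as $A\log A - A\log B - A + B$ with $A:=\widehat\varrho c\Psi$ and $B:=\widehat\varrho c$ then exhibits it as a jointly continuous function of the bounded data $(\widehat\varrho(\cdot,\pm\sigma),f,m)$ that vanishes on $\{\widehat\varrho(\cdot,\sigma)=0\}$. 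Uniform $k$-integrability of this integrand follows from the a priori entropy bound provided by $\mathcal{I}_{tr}(\widehat\alpha)<\infty$ together with the elementary estimate $A\log(A/B)\geq A-B$, and dominated convergence then yields \eqref{approx rate}. The genuinely delicate point is controlling the $A\log B$ contribution uniformly in $k$ on regions where $\widehat\varrho$ is small but $f$ is not: this is where one must exploit that the mollified $f^{(k)}$ inherits the entropic integrability of $f$ against $\widehat\varrho$ encoded in $\mathcal{I}_{tr}(\widehat\alpha)$.
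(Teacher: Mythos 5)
Your construction of the approximating sequence is sound in outline and close in spirit to the paper's own proof: extract the pair $(\widehat{\varrho},f)$ from \eqref{flux}, regularize it, lift the density to be strictly positive, and read off a smooth $H_k$ from \eqref{Psi} so that $\widehat{\alpha}_k\in D_0([0,T];\mathcal{M}_V)$. Two slips in the construction are cosmetic but should be fixed: extending by reflection in $t$ flips the sign of $\partial_t$ and so does not preserve \eqref{flux} near $t=0,T$; and your choice $H^{(k)}_t(x,\sigma)=-\tfrac{\sigma}{2}\widetilde{H}^{(k)}_t(x)$ produces $H^{(k)}_t(x,1)-H^{(k)}_t(x,-1)=-\widetilde{H}^{(k)}_t(x)$, the wrong sign.

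The genuine gap is exactly the step you flag as ``delicate'' and then do not carry out: the upper bound $\limsup_{k}\mathcal{I}_{tr}(\widehat{\alpha}_k)\le\mathcal{I}_{tr}(\widehat{\alpha})$. Because you tie the positivity lift $1/k$ to the mollification scale, the denominator $2\widehat{\varrho}^{(k)}c$ in \eqref{Psi} is not bounded below uniformly in $k$. Finiteness of $\mathcal{I}_{tr}(\widehat{\alpha})$ forces $A=(-\sigma f)_+$ to vanish a.e.\ on $\{\widehat{\varrho}(\cdot,\sigma)=0\}$, but mollification smears $f$ from neighbouring regions where $\widehat{\varrho}>0$ into the set where $\widehat{\varrho}^{(k)}\approx 1/k$, so that $A^{(k)}\log(A^{(k)}/B^{(k)})$ can be of order $\log k$ on a set of positive measure; the a priori entropy bound controls the unmollified pair $(A,B)$, not $(A^{(k)},B^{(k)})$, so ``uniform integrability'' does not follow as stated. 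The tool that closes this is convexity, and the paper uses it by decoupling the two regularizations: first set $\widehat{\alpha}_\varepsilon=(1-\varepsilon)\widehat{\alpha}+\varepsilon\cdot 1$ and use convexity together with lower semicontinuity of $\mathcal{I}_{tr}$ to get $\mathcal{I}_{tr}(\widehat{\alpha}_\varepsilon)\to\mathcal{I}_{tr}(\widehat{\alpha})$; then, for fixed $\varepsilon$, smooth the density, so that the lower bound on $\widehat{\varrho}$ is uniform in the smoothing parameter and \eqref{exact formula rate} passes to the limit by bounded convergence. If you insist on your one-step scheme, you must prove a Jensen-type inequality using the joint convexity of $(A,B)\mapsto A\log(A/B)-A+B$ to show mollification does not increase the entropy integrand (and separately handle the nonlinearity entering through $m(\widehat{\alpha}_{k,t})$ inside $c$). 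As written, \eqref{approx rate} is not established.
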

\begin{proof}
    We first show that $\widehat{\alpha} \in D([0,T];\mathcal{M}_V)$ can be approximated  by trajectories with positive density. We define the following measure for any $\varepsilon>0$
        \begin{equation}
            \widehat{\alpha}_\varepsilon = (1-\varepsilon)\widehat{\alpha} + \varepsilon 1
        \end{equation}
    where $1$ on the right-hand side denotes the measure with constant density equal to 1. It follows that $\widehat{\alpha}_\varepsilon$ has positive density and that $\widehat{\alpha}_\varepsilon \to \widehat{\alpha}$ weakly as $\varepsilon\to 0$. Therefore, by convexity and lower semicontinuity of the rate function $\mathcal{I}^\varrho$, we then find that 
        \begin{equation}
            \uplim_{\varepsilon\to0} \mathcal{I}_{tr}(\widehat{\alpha}_\varepsilon) \leq \mathcal{I}_{tr}(\widehat{\alpha}) \leq \lowlim_{\varepsilon \to 0} \mathcal{I}_{tr}(\widehat{\alpha}_\varepsilon).
        \end{equation}
    Hence we have indeed found a good approximation.

    Now assume that $\widehat{\alpha} \in D([0,T];\mathcal{M}_V)$ has density $\widehat{\varrho}>0$ and that $\mathcal{I}_{tr}(\widehat{\alpha})<\infty$. By Lemma \ref{corollary ldp}, there exists a bounded measurable  $H:[0,T]\times V \to \mathbb{R}$ such that \eqref{weak-PDE} holds weakly. Now find a sequence $\widehat{\alpha}_k$ with densities $\widehat{\varrho}_k \in C^{2,1}([0,T]\times V)$ such that $\widehat{\varrho}_k\to \widehat{\varrho}$ pointwise as $k\to\infty$. It then follows that $\widehat{\alpha}_k \to \widehat{\alpha}$ weakly and, by \eqref{flux} and \eqref{Psi}, each $\widehat{\alpha}_k$ satisfies \eqref{weak-PDE} for some function  $H_k\in C^{1,0}([0,T]\times V)$ where $H_k\to H$ pointwise. By the formulation of $\mathcal{I}_{tr}(\widehat{\alpha})$ in \eqref{exact formula rate}, we can then indeed conclude that \eqref{approx rate} holds.
\end{proof}

\begin{Theorem}[Lower bound]\label{low bound neigh}
    Given a $\widehat{\alpha} \in D([0,T];\mathcal{M}_V)$, for every neighborhood $\mathcal{O} \subset D([0,T];\mathcal{M}_V)$ of $\widehat{\alpha}$ we have that 
        \begin{equation}
            \lowlim_{N\to\infty} \frac{1}{N} \log \mathbb{P}_N^{\varrho}(\pi^N_{\cdot} \in \mathcal{O}) \geq - \mathcal{I}^{\varrho}(\widehat{\alpha}).
        \end{equation}
    As a consequence, for every open set $\mathcal{O} \subset  D([0,T];\mathcal{M}_V)$ we have that 
        \begin{equation}
            \lowlim_{N\to\infty} \frac{1}{N} \log \mathbb{P}_N^{\varrho}(\pi^N_{\cdot} \in \mathcal{O}) \geq -\inf_{\widehat{\alpha} \in \mathcal{O}} \mathcal{I}^{\varrho}(\widehat{\alpha}).
        \end{equation}
\end{Theorem}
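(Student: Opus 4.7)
The plan is to prove the neighborhood statement first; the open-set consequence then follows immediately because an open set $\mathcal{O}$ is a neighborhood of each of its points. We may assume $\mathcal{I}^\varrho(\widehat{\alpha}) < \infty$, so that by Lemmas \ref{initial ldp good} and \ref{corollary ldp}, $\widehat{\alpha}$ has a density $\widehat{\varrho}$. The proof proceeds in two layers: first establish the bound when $\widehat{\alpha} \in D_0([0,T];\mathcal{M}_V)$, so there is an honest $H \in C^{1,0}([0,T]\times V)$ driving \eqref{weak-PDE}; then extend to general finite-rate $\widehat{\alpha}$ via Lemma \ref{approx rate lemma}.

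For the first layer, fix $\widehat{\alpha} \in D_0$ with its associated $H$, and change measure from $\p^\varrho_N$ to the perturbed path measure $\p^{\widehat{\varrho}_0,H}_N$, which by Theorem \ref{weak hydro theorem} has $\widehat{\alpha}$ itself as its hydrodynamic limit. The Radon–Nikodym derivative factorizes as
\begin{equation}
\frac{\dd\p^{\widehat{\varrho}_0,H}_N}{\dd\p^\varrho_N} \;=\; \frac{\dd\mu^{\widehat{\varrho}_0}_N}{\dd\mu^\varrho_N}(\eta^N_0)\cdot \mathcal{Z}^H_{N,T}(\pi^N_\cdot),
\end{equation}
with $\mathcal{Z}^H_{N,T}$ the exponential martingale \eqref{exp mart}. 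Writing $\p^\varrho_N(\pi^N_\cdot \in \mathcal{O})$ as an expectation under $\p^{\widehat{\varrho}_0,H}_N$ of the inverse Radon–Nikodym derivative, and applying Jensen's inequality after conditioning on $\{\pi^N_\cdot \in \mathcal{O}\}$ yields
\begin{equation}
\frac{1}{N}\log\p^\varrho_N(\pi^N_\cdot \in \mathcal{O}) \;\geq\; \frac{1}{N}\log\p^{\widehat{\varrho}_0,H}_N(\pi^N_\cdot \in \mathcal{O}) - \frac{1}{N\,\p^{\widehat{\varrho}_0,H}_N(\pi^N_\cdot \in \mathcal{O})}\,\E^{\widehat{\varrho}_0,H}_N\!\!\left[\mathds{1}_{\pi^N_\cdot \in \mathcal{O}}\,\log\frac{\dd\p^{\widehat{\varrho}_0,H}_N}{\dd\p^\varrho_N}\right].
\end{equation}
The first term on the right vanishes as $N\to\infty$ since $\p^{\widehat{\varrho}_0,H}_N(\pi^N_\cdot \in \mathcal{O}) \to 1$ by Theorem \ref{weak hydro theorem}. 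In the second term, Lemma \ref{initial ldp good} gives $\frac{1}{N}\E_{\mu^{\widehat{\varrho}_0}_N}[\log(\dd\mu^{\widehat{\varrho}_0}_N/\dd\mu^\varrho_N)] \to h_0^\varrho(\widehat{\alpha}_0)$, while Corollary \ref{radon good} identifies $\frac{1}{N}\log\mathcal{Z}^H_{N,T}(\pi^N_\cdot) = \mathcal{I}_{tr}(\pi^N_\cdot;H) + \mathcal{O}(1/N)$, which converges in probability under $\p^{\widehat{\varrho}_0,H}_N$ to $\mathcal{I}_{tr}(\widehat{\alpha};H) = \mathcal{I}_{tr}(\widehat{\alpha})$ by continuity of the functional $\pi \mapsto \mathcal{I}_{tr}(\pi;H)$ and the hydrodynamic limit; uniform integrability (based on boundedness of $H$ and of $\widetilde{H}$, and exponential moment control of $|\eta^N|/N$ under the perturbed Poisson-type initial distribution) promotes this to convergence in expectation. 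Collecting the limits, the right-hand side tends to $-h_0^\varrho(\widehat{\alpha}_0) - \mathcal{I}_{tr}(\widehat{\alpha}) = -\mathcal{I}^\varrho(\widehat{\alpha})$, which is the desired bound on $D_0$.

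For the second layer, given a general $\widehat{\alpha}$ with $\mathcal{I}^\varrho(\widehat{\alpha}) < \infty$, Lemma \ref{approx rate lemma} produces a sequence $(\widehat{\alpha}_k) \subset D_0$ with $\widehat{\alpha}_k \to \widehat{\alpha}$ weakly and $\mathcal{I}_{tr}(\widehat{\alpha}_k) \to \mathcal{I}_{tr}(\widehat{\alpha})$; the corresponding approximation of the initial densities, combined with the explicit formula in \eqref{rateini} and dominated convergence, also yields $h_0^\varrho(\widehat{\alpha}_{k,0}) \to h_0^\varrho(\widehat{\alpha}_0)$, hence $\mathcal{I}^\varrho(\widehat{\alpha}_k) \to \mathcal{I}^\varrho(\widehat{\alpha})$. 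Since $\widehat{\alpha}_k \to \widehat{\alpha}$, the neighborhood $\mathcal{O}$ of $\widehat{\alpha}$ eventually contains $\widehat{\alpha}_k$, so the first layer applied to $\widehat{\alpha}_k$ gives $\liminf_N \frac{1}{N}\log\p^\varrho_N(\pi^N_\cdot \in \mathcal{O}) \geq -\mathcal{I}^\varrho(\widehat{\alpha}_k)$; letting $k \to \infty$ concludes. The main obstacle is the uniform integrability step needed to pass from convergence in probability of $\frac{1}{N}\log\mathcal{Z}^H_{N,T}(\pi^N_\cdot)$ to convergence of its expectation; a secondary technical point is verifying joint convergence of both parts of $\mathcal{I}^\varrho$ along the approximating sequence, where the construction of Lemma \ref{approx rate lemma} combined with the convex, Gâteaux-continuous structure of $h_0^\varrho$ does the job.
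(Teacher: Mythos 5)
Your proposal is correct and follows essentially the same route as the paper: tilt to the weakly perturbed measure $\p^{\widehat{\varrho}_0,H}_N$, apply Jensen's inequality on the event $\{\pi^N_\cdot\in\mathcal{O}\}$ (whose probability tends to $1$ by Theorem \ref{weak hydro theorem}), identify the limit of the log Radon--Nikodym derivative via Lemma \ref{initial ldp good} and Corollary \ref{radon good}, and reduce the general case to $D_0([0,T];\mathcal{M}_V)$ through Lemma \ref{approx rate lemma}. If anything, you are more explicit than the paper about the uniform-integrability step and the convergence of the static part $h_0^\varrho(\widehat{\alpha}_{k,0})$ along the approximating sequence, both of which the paper leaves implicit.
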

\begin{proof}
    If $\mathcal{I}^{\varrho}(\widehat{\alpha})=\infty$, the result is immediate, therefore we can assume that $\mathcal{I}^{\varrho}(\widehat{\alpha})<\infty$. By Lemma \ref{approx rate lemma} it is then enough to prove it for $\widehat{\alpha} \in D_0([0,T];\mathcal{M}_V)$, and so by Theorem \ref{weak hydro theorem} there exists an $H\in C^{1,0}([0,T]\times V)$ such that 
        \begin{equation}
            \p^{\widehat{\varrho}_0,H}_N(\pi^N_{\cdot} \in \cdot) \to \delta_{\widehat{\alpha}}.
        \end{equation}
    where $\widehat{\varrho}_0$ is the density of $\widehat{\alpha}_0$. Therefore, we have that 
        \begin{align}
            \lowlim_{N\to\infty} \frac{1}{N} \log \mathbb{P}_N^{\varrho}(\pi^N_{\cdot} \in \mathcal{O})
            &\nn= - \lowlim_{N\to\infty}\frac{1}{N}\log \E^{\widehat{\varrho}_0, H}_N\left[\mathds{1}_{\pi^N_{\cdot} \in \mathcal{O}} \frac{\dd \mu^{\widehat{\varrho}_0}_N}{\dd \mu^{\varrho}_N}\cdot\frac{\dd \p^{\widehat{\varrho}_0,H}_N}{\dd \p^{\widehat{\varrho}_0}_N} \right]\\
            \nn&\geq - \lowlim_{N\to\infty}\frac{1}{N}\E^{\widehat{\varrho}_0,H}_N\left[\log \frac{\dd \mu^{\widehat{\varrho}_0}_N}{\dd \mu^{\varrho}_N}\cdot\frac{\dd \p^{\widehat{\varrho}_0,H}_N}{\dd \p^{\widehat{\varrho}_0}_N} \right]\\
            &= - \mathcal{I}^{\varrho} (\widehat{\alpha}),
        \end{align}
    where we used Lemma \ref{initial ldp good}, Corollary \ref{radon good} and  that $\mathcal{I}_{tr}(\widehat{\alpha}) = \mathcal{I}_{tr}(\widehat{\alpha};H)$.
\end{proof}

\section{Proof of the hydrodynamic limit}\label{proof hydro}
In this section we prove the hydrodynamic limit of the weakly perturbed process defined in Section \ref{wam section}. We first prove that the PDE given in \eqref{weak-PDE} is well-posed, and afterwards we prove Theorem \eqref{weak hydro theorem}.
\subsection{Well-posedness of the PDE}
The approach is to define a sequence of densities $\varrho^{(n)}$ through the following recursive relation
\begin{multline}\label{iterative pde}
        \partial_t \varrho^{(n+1)}_t(x,\sigma) = -\sigma \partial_x\varrho^{(n+1)}_t(x,\sigma) + c(-\sigma,m(\varrho^{(n)}_t))e^{\sigma H_t(x)}\varrho^{(n+1)}_t(x,-\sigma)\\ - c(\sigma,m(\varrho^{(n)}_t))e^{-\sigma H_t(x)}\varrho^{(n+1)}_t(x,\sigma),
\end{multline}
where every $\varrho^{(n)}$ starts from $\varrho^{(n)}_0 = \varrho$. 
Setting $\mathcal{T}(\varrho^{(n)}) = \varrho^{(n+1)}$, it is enough to show that $\mathcal{T}$ is a contraction (up to some  finite time $T>0$) in the space $L^{\infty}([0,T];L^1(V))$. First note that for every trajectory $\varrho^{(n)}$, the trajectory $\varrho^{(n+1)}$ solving \eqref{iterative pde} satisfies the conservation of particles, hence for any $t\geq0$ we have that $||\varrho^{(n+1)}_t||_{L^1(V)} = ||\varrho||_{L^1(V)}$ with $\varrho$ the initial profile. Therefore, we indeed have that $\mathcal{T}: L^{\infty}([0,T];L^1(V))\to L^{\infty}([0,T];L^1(V))$. Now let $\varrho_1$, $\varrho_2$ be any two trajectories of densities, and denote the following 
\begin{align}
    &\nonumber\psi_1 = \mathcal{T}(\varrho_1), && \psi_2= \mathcal{T}(\varrho_2),\\
    &m_{1,t} = m(\varrho_{1,t}), &&m_{2,t} = m(\varrho_{2,t}).
\end{align}
Furthermore, denote $\delta = \psi_1 - \psi_2$ and $\varepsilon = m_1 - m_2$. For $[m]=\{m_t:t\in[0,T]\}$ a deterministic trajectory of magnetizations, we define the mapping 
\begin{equation}
    Q_{[m]}[\psi](x,\sigma,t) = c(-\sigma,m_t)e^{\sigma H_t(x)}\psi_t(x,-\sigma) - c(\sigma,m_t)e^{-\sigma H_t(x)}\psi_t(x,\sigma).
\end{equation}
Note that this mapping is linear in $\psi$. We then have that 
\begin{equation}
    \partial_t \delta_t(x,\sigma) = -\sigma \partial_x\delta_t(x,\sigma) + Q_{[m_1]}[\psi_1](x,\sigma,t) - Q_{[m_2]}[\psi_2](x,\sigma,t).
\end{equation}
We can rewrite the difference of the last two terms in a linear and non-linear part as follow 
\begin{equation}
    Q_{[m_1]}[\psi_1]- Q_{[m_2]}[\psi_2]
    = Q_{[m_1]}[\psi_1 - \psi_2] + \left(Q_{[m_1]}[\psi_2] - Q_{[m_2]}[\psi_2] \right).
\end{equation}
For the linear part, we have that 
\begin{align}\label{ping}
    \nonumber||Q_{[m_1]}[\delta](\cdot,\cdot,t)||_{L^1(V)} 
    &= \left|\left|  c(-\sigma,m_{1,t})e^{\sigma H_t(x)}\delta_t(x,-\sigma) - c(\sigma,m_{1,t})e^{-\sigma H_t(x)}\delta_t(x,\sigma)\right|\right|_{L^1(V)}\\
    &\leq C_1||\delta_t||_{L^1(V)},
\end{align}
with $C_1$ some constant depending on the bounded functions $c$ and $H$. For the non-linear part, we have that 
\begin{align}
    \nonumber&Q_{[m_1]}[\psi_2](x,\sigma,t) - Q_{[m_2]}[\psi_2](x,\sigma,t)\\
    &= [c(-\sigma,m_{1,t}) - c(-\sigma,m_{2,t})]e^{\sigma H_t(x)}\psi_{2,t}(x,-\sigma) - [c(\sigma,m_{1,t}) - c(\sigma,m_{2,t})]e^{-\sigma H_t(x)}\psi_{2,t}(x,\sigma).
\end{align}
By the Lipschitz-continuity of $c(\sigma,m)$ we now have that 
\begin{equation}\label{pong}
    ||Q_{[m_1]}[\psi_2](\cdot,\cdot,t) - Q_{[m_2]}[\psi_2](\cdot,\cdot,t)||_{L^1(V)} \leq L|\varepsilon_t|\cdot C_2||\psi_{2,t}||_{L^1(V)},
\end{equation}
with $L$ the Lipschitz-constant and $C_2$ some constant depending on $H$. Here $||\psi_{2,t}||_{L^1(V)} = ||\varrho||_{L^1(V)}$  by conservation of particles. Furthermore, we have that 
\begin{align}
    |\varepsilon_t| 
    = |m_{1,t}-m_{2,t}|
    = \frac{1}{||\varrho||_{L^1(V)}} \cdot ||\varrho_{1,t} - \varrho_{2,t}||_{L^1(V)}.
\end{align}
From \eqref{ping} and \eqref{pong}, we find that 
\begin{align}
    \nonumber\partial_t||\delta_t||_{L^1(V)} 
    &\leq ||Q_{[m_1]}[\delta](\cdot,\cdot,t)||_{L^1(V)}  +   ||Q_{[m_1]}[\psi_2](\cdot,\cdot,t) - Q_{[m_2]}[\psi_2](\cdot,\cdot,t)||_{L^1(V)} \\
    \nonumber
    &\leq C_1||\delta_{t}||_{L^1(V)} + LC_2||\varrho_{1,t} - \varrho_{2,t}||_{L^1(V)}\\
    &\leq C_1||\delta_{t}||_{L^1(V)} + LC_2||\varrho_{1} - \varrho_{2}||_{L^\infty([0,T];L^1(V))}.
\end{align}
Note that the transport term vanishes since $\int_{\mathbb{T}}\partial_x|\delta_t(x,\sigma)|dx =0$ by periodic boundary conditions. We can rewrite this in integral form as
\begin{equation}
    ||\delta_t||_{L^1(V)}  \leq \int_0^t C_1 ||\delta_s||_{L^1(V)}\dd s + LC_2t||\varrho_{1} - \varrho_{2}||_{L^\infty([0,T];L^1(V))}.
\end{equation}
By Gronwall's inequality, we find that 
\begin{equation}
    ||\delta_t||_{L^1(V)} \leq LC_2te^{ C_1t}||\varrho_{1} - \varrho_{2}||_{L^\infty([0,T];L^1(V))},
\end{equation}
and so we can conclude that 
\begin{equation}
    ||\mathcal{T}(\varrho_1) - \mathcal{T}(\varrho_2)||_{L^\infty([0,T];L^1(V))} \leq LC_2Te^{ C_1T}||\varrho_{1} - \varrho_{2}||_{L^\infty([0,T];L^1(V))} .
\end{equation}
Taking $T>0$ small enough, this is a contraction on $L^\infty([0,T];L^1(V))$, hence the sequence $\varrho^{(n)}$ converges locally to a unique solution of the PDE \eqref{weak-PDE}.

\subsection{Proof of Theorem \eqref{weak hydro theorem}}
Recall that $\alpha^H \in D([0,T];\mathcal{M}_V)$ denotes the process such that for every $t\in [0,T]$ the measure $\alpha^H_t$ has density  $\varrho^H_t(x,\sigma)$, which is the solution the the PDE given by \eqref{weak-PDE}.
In this way, $\alpha^H$ is the unique trajectory of measures measure such that for every $G \in C^\infty([0,T]\times V)$ and $t\in[0,T]$ we have that 
    \begin{equation}
        \mathscr{M}_t^{H,G}(\alpha^H) := \left<\alpha^H_t,G_t\right> - \left<\alpha^H_0,G_0\right>  - \int_0^t \left<\alpha^H_s, \left(\partial_s + \left(\!A^{H}_{s, \alpha^H}\!\right)^{\!*}\right) G_s\right>  \dd s = 0,
    \end{equation}
where $ \left(\!A^{H}_{s, \alpha^H}\!\right)^{\!*}$ is the differential operator given by 
    \begin{equation}
        \left(\!A^{H}_{s, \alpha^H}\!\right)^{\!*} G_s(x,\sigma) = \sigma \partial_x G_s(x,\sigma) + c(\sigma,m(\alpha^H_s))e^{-\sigma \widetilde{H}_s(x)}\big(G_s(x,-\sigma) - G_s(x,\sigma)\big), 
    \end{equation}
which is the action of the adjoint of $A^{H}_{s,\alpha^H}$ on smooth functions, with $A^{H}_{s,\alpha^H}$ given by
    \begin{equation}
         A^{H}_{s,\alpha^H}G(x,\sigma) =  -\sigma \partial_x G(x,\sigma) + c(-\sigma,m(\alpha^H_s))e^{\sigma \widetilde{H}_s(x)}G(x,-\sigma) - c(\sigma,m(\alpha^H_s))e^{-\sigma \widetilde{H}_s(x)}G (x,\sigma).
    \end{equation}
For a given $G \in C^\infty([0,T]\times V)$ we define the Dynkin Martingale
    \begin{equation}
        \mathscr{M}_{N,t}^{H,G} (\pi^N_{\cdot}) := \langle \pi^N_{t},G_t\rangle  - \langle \pi^N_{0},G_0\rangle  - \int_0^t (\partial_s +\mathscr{L}^H_{N,s} )\langle \pi^N_{s},G_s\rangle \dd s.
    \end{equation}

\begin{Lemma}\label{lemma 2.1}
For every $G \in C^\infty(V)$ we have
    \begin{equation}
        \lim_{N\to\infty} \E^{\varrho,H}_N\left[\sup_{t\in[0,T]} \left|\mathscr{M}_{N,t}^{H,G}(\pi^N_{\cdot}) - \mathscr{M}_t^{H,G}(\pi^N_{\cdot})\right|\right] = 0.
    \end{equation}
\end{Lemma}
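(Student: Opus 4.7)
The plan is to express the difference $\mathscr{M}_{N,t}^{H,G}(\pi^N_\cdot) - \mathscr{M}_t^{H,G}(\pi^N_\cdot)$ as an integral of a local error term and then show that this term is $O(1/N)$ in expectation. Subtracting the two definitions, the boundary terms $\langle \pi_t^N, G_t\rangle - \langle \pi_0^N, G_0\rangle$ cancel, and so do the two copies of $\int_0^t\langle \pi^N_s, \partial_s G_s\rangle\, ds$ arising from the $\partial_s$ in each formula, leaving
\begin{equation}
\mathscr{M}_{N,t}^{H,G}(\pi^N_\cdot) - \mathscr{M}_t^{H,G}(\pi^N_\cdot) = -\int_0^t \left[\mathscr{L}_{N,s}^H \langle \pi^N_s, G_s\rangle - \langle \pi^N_s, (A_{s,\pi^N}^H)^{\!*} G_s\rangle\right] ds.
\end{equation}
Hence the supremum on the left-hand side of the lemma is bounded by $\int_0^T$ of the absolute value of the integrand, and it suffices to control this integral in $L^1(\p^{\varrho,H}_N)$.

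Second, I would evaluate $\mathscr{L}_{N,s}^H\langle\pi_s^N, G_s\rangle$ directly from the explicit form of the weakly perturbed generator \eqref{weak asym gen}, in the spirit of the preliminary calculation preceding Theorem \ref{hydro theorem}. The internal-flip contribution matches $\langle \pi^N_s, c(\sigma,m(\pi^N_s))e^{-\sigma\widetilde H_s(x)}(G_s(x,-\sigma)-G_s(x,\sigma))\rangle$ exactly, because by \eqref{mag from emp} one has $m_N(\eta_s^N) = m(\pi^N_s)$ and no spatial discretization is involved. For the active-jump part, a first-order Taylor expansion of the exponential together with a second-order Taylor expansion of $G_s$ yields
\begin{equation}
N\bigl(G_s(\tfrac{x+\sigma}{N},\sigma) - G_s(\tfrac{x}{N},\sigma)\bigr)\, e^{H_s(\frac{x+\sigma}{N},\sigma) - H_s(\frac{x}{N},\sigma)} = \sigma\, \partial_x G_s(\tfrac{x}{N},\sigma) + \tfrac{1}{N}R_N(x,\sigma,s),
\end{equation}
with $|R_N|\le C$ uniformly in $x,\sigma,s$ and $N$, where $C$ depends only on $\|H\|_{C^{1,0}}$ and on the first two spatial derivatives of $G$. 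Weighting by $\eta_s^N(x,\sigma)/N$ and summing over $V_N$ then gives the pointwise bound
\begin{equation}
\bigl|\mathscr{L}_{N,s}^H\langle \pi^N_s, G_s\rangle - \langle \pi^N_s, (A_{s,\pi^N}^H)^{\!*} G_s\rangle\bigr| \le \frac{C}{N}\,\pi^N_s(V).
\end{equation}

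Third, conservation of particles yields $\pi^N_s(V) = \pi^N_0(V) = |\eta_0^N|/N$ for every $s\in[0,T]$, and under $\mu_N^\varrho$ the variable $|\eta_0^N|$ is a sum of independent Poisson random variables with $\E|\eta_0^N| = \sum_{(x,\sigma)\in V_N}\varrho(x/N,\sigma) = O(N)$. Hence $\E^{\varrho,H}_N[\pi_0^N(V)]$ is bounded uniformly in $N$, and by Fubini
\begin{equation}
\E^{\varrho,H}_N\!\left[\sup_{t\in[0,T]}\bigl|\mathscr{M}_{N,t}^{H,G}(\pi^N_\cdot) - \mathscr{M}_t^{H,G}(\pi^N_\cdot)\bigr|\right] \le \frac{CT}{N}\,\E^{\varrho,H}_N[\pi_0^N(V)] = \mathcal{O}(1/N),
\end{equation}
which vanishes as $N\to\infty$. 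The argument is essentially routine; the feature that streamlines it substantially (and that one might worry about being the main obstacle) is the mean-field structure of the flip rates, which lets $m_N(\eta_s^N)$ be replaced by $m(\pi^N_s)$ exactly and removes the need for any super-exponential replacement lemma.
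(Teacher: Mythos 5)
Your decomposition, the exact cancellation of the flip term via $m_N(\eta^N_s)=m(\pi^N_s)$, the Taylor expansion of the transport term, and the closing moment bound via particle conservation and the Poisson initial law are exactly the paper's argument, and the proof of the lemma goes through. One quantitative claim, however, overshoots the paper's hypotheses: you assert that the remainder in the expansion of $N\bigl(G_s(\tfrac{x+\sigma}{N},\sigma)-G_s(\tfrac{x}{N},\sigma)\bigr)e^{H_s(\frac{x+\sigma}{N},\sigma)-H_s(\frac{x}{N},\sigma)}$ is uniformly $O(1/N)$ with a constant controlled by $\|H\|_{C^{1,0}}$. In this paper $C^{1,0}([0,T]\times V)$ means differentiable in \emph{time} and merely \emph{continuous} in space, so the cross term $\sigma\partial_x G_s\cdot\bigl(e^{H_s(\frac{x+\sigma}{N},\sigma)-H_s(\frac{x}{N},\sigma)}-1\bigr)$ is only $o(1)$ (via the uniform modulus of continuity of $H$ on the compact $[0,T]\times V$), not $O(1/N)$, unless you additionally assume $H$ Lipschitz in space. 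The paper keeps this term as $\|\partial_x G\|_\infty\sup_{(y,\sigma')}\bigl|H_s(\tfrac{y+\sigma'}{N},\sigma')-H_s(\tfrac{y}{N},\sigma')\bigr|$ and concludes only that the whole expression tends to zero, which is all the lemma needs; your final display should therefore read $o(1)$ rather than $\mathcal{O}(1/N)$, but the conclusion is unaffected.
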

\begin{proof}
Note that
    \begin{equation}
        \mathscr{M}_{N,t}^{H,G}(\pi^N_{\cdot}) - \mathscr{M}_t^{H,G}(\pi^N_{\cdot}) = \int_0^t\left( \mathscr{L}^H_{N,s} \langle \pi^N_{s},G_s\rangle  - \left\langle \pi^N_{s}, \left(\!A^{H}_{s, \pi^N_\cdot}\!\right)^{\!*} G_s\right\rangle\right) \dd s. 
    \end{equation}
So we need to calculate $ \mathscr{L}^H_{N,s} \langle \pi^N_{s},G_s\rangle $. In order to do that, we start with the preliminary calculation
    \begin{align}
        \mathscr{L}^H_{N,s} \langle \pi^N_{s},G_s\rangle  
        &= N\sum_{(x,\sigma) \in V_N} \eta^N_s(x,\sigma)e^{H_s(\frac{x+\sigma}{N},\sigma) - H_s(\frac{x}{N},\sigma)} \left[ \big\langle\pi^N\big((\eta^N_s)^{(x,\sigma)\to(x+\sigma,\sigma)}\big), G_s\big\rangle- \langle \pi^N_{s}, G_s \rangle  \right]\nonumber\\
        &\qquad  + \sum_{(x,\sigma) \in V_N} \eta_s^N(x,\sigma) c(\sigma,m(\pi^N_s)) e^{-\sigma \widetilde{H}_s(x)}\left[ \big\langle\pi^N\big((\eta^N_t)^{(x,\sigma)\to(x,-\sigma)}\big), G_s\big\rangle - \langle \pi^N_{s},G_s\rangle \right]\nonumber\\
        \nn&=   \sum_{(x,\sigma) \in V_N} \eta^N_s(x,\sigma) e^{H_s(\frac{x+\sigma}{N},\sigma) - H_s(\frac{x}{N},\sigma)}\big(G_s(\tfrac{x+\sigma}{N},\sigma) - G_s(\tfrac{x}{N},\sigma)\big)\\
        &\qquad + \frac{1}{N} \sum_{(x,\sigma)\in V_N} \eta_s^N(x,\sigma) c(\sigma,m(\pi^N_s)) e^{-\sigma \widetilde{H}_s(x)}\big(G_s(x,-\sigma)- G_s(x,\sigma)\big) ,
    \end{align}
 Using Taylor approximation, we can now write 
    \begin{align}\label{Ln}
        \nn \mathscr{L}^H_{N,s}\langle \pi^N_{s},G_s\rangle  &= \frac{1}{N} \sum_{(x,\sigma) \in V_N} \eta^N_s(x,\sigma) \left[\sigma\partial_xG_s(\tfrac{x}{N},\sigma)  + c(\sigma,m(\pi^N_s))e^{-\sigma \widetilde{H}_s(x)}\big(G_s(x,-\sigma)- G_s(x,\sigma)\big) \right]\\
        \nn&\qquad \qquad \qquad + R(N,H, G,s)\\
        &= \left\langle \pi^N_{s}, \left(\!A^{H}_{s, \pi^N_\cdot}\!\right)^{\!*} G_s\right\rangle + R(N,H, G,s),
    \end{align}
where
    \begin{align}
        R(N,H,G,s) 
        \nn&\leq \frac{1}{N} \sum_{(x,\sigma) \in V_N} \eta(x,\sigma)||\partial_xG||_\infty\sup_{(y,\sigma')\in V_N}\big|H_s(\tfrac{y+\sigma'}{N},\sigma') - H_s(\tfrac{y}{N},\sigma')\big| \\
        \nn&\qquad + \frac{1}{N^2} \sum_{(x,\sigma) \in V_N} \eta(x,\sigma)||\partial_{xx}G||_\infty e^{2||H||_\infty}\\
        &= \frac{1}{N} |\eta^N|\left(||\partial_xG||_\infty \sup_{(y,\sigma')\in V_N}\big|H_s(\tfrac{y+\sigma'}{N},\sigma') - H_s(\tfrac{y}{N},\sigma')\big|+ \frac{1}{N}||\partial_{xx}G||_\infty e^{2||H||_\infty}\right).
    \end{align}
We then conclude that  
    \begin{align}\label{R1}
        \nn&\E^{\varrho,H}_N\left[\sup_{t\in[0,T]} \left|\mathscr{M}_{N,t}^{H,G}(\pi^N_{\cdot}) - \mathscr{M}_t^{H,G}(\pi^N_{\cdot})\right|\right] \\
        \nn&= \E^{\varrho,H}_N\left[ \sup_{t\in[0,T]}\left|\int_0^t R(N,H,G,s)\dd s\right| \right]\\
        & \leq \frac{T}{N} \left(||\partial_xG||_\infty\sup_{(y,\sigma')\in V_N}\big|H_s(\tfrac{y+\sigma'}{N},\sigma') - H_s(\tfrac{y}{N},\sigma')\big| + \frac{1}{N}||\partial_{xx}G||_\infty e^{2||H||_\infty}\right)  \E^{\varrho,H}_N\left[|\eta^N|\right] \to 0,
    \end{align}
where we used that $\E^{\varrho,H}_N\left[|\eta^N|\right] \leq ||\varrho||_\infty N$ and that $H_s$ is continuous on $\mathbb{T}$.
\end{proof}
\begin{Lemma}\label{lemma 2.2}
    For every $G \in C^\infty([0,T]\times V)$ we have that 
    \begin{equation}
        \lim_{N\to\infty} \E^{\varrho,H}_N\left[\sup_{t\in[0,T]} \left(\mathscr{M}_{N,t}^{H,G}(\pi^N_{\cdot})\right)^2\right]=0.
    \end{equation}
\end{Lemma}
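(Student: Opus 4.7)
The plan is to apply Doob's $L^2$ maximal inequality and reduce to bounding the predictable quadratic variation. Since $\mathscr{M}_{N,t}^{H,G}(\pi^N_\cdot)$ is a mean-zero martingale under $\p^{\varrho,H}_N$, Doob gives
\[
\E^{\varrho,H}_N\!\left[\sup_{t\in[0,T]} \bigl(\mathscr{M}_{N,t}^{H,G}(\pi^N_\cdot)\bigr)^2\right] \leq 4\,\E^{\varrho,H}_N\!\left[\bigl(\mathscr{M}_{N,T}^{H,G}(\pi^N_\cdot)\bigr)^2\right],
\]
so it is enough to control the second moment at time $T$. Standard stochastic calculus identifies this second moment with the expectation of the integrated carré du champ
\[
\Gamma^H_{N,s}(F)(\eta) := \mathscr{L}^H_{N,s}(F^2)(\eta) - 2F(\eta)\,\mathscr{L}^H_{N,s}F(\eta),
\]
evaluated at $F_s(\eta) = \langle \pi^N(\eta), G_s\rangle$, since the $\partial_s G_s$ contribution is continuous in $t$ and hence does not enter the bracket.

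Next I would compute $\Gamma^H_{N,s}$ explicitly. Because $\mathscr{L}^H_{N,s}$ decomposes into a sum over single-particle jumps, the carré du champ becomes a sum of squared increments of $F_s$ weighted by the corresponding jump rates. An active jump changes $F_s$ by $\frac{1}{N}[G_s(\tfrac{x+\sigma}{N},\sigma)-G_s(\tfrac{x}{N},\sigma)] = \mathcal{O}(1/N^2)$ by Taylor expansion, so its squared contribution, after multiplication by the active rate $N\eta(x,\sigma)e^{\mathcal{O}(1/N)}$, is $\mathcal{O}(\eta(x,\sigma)/N^3)$ per site. A flip changes $F_s$ by $\frac{1}{N}[G_s(\tfrac{x}{N},-\sigma)-G_s(\tfrac{x}{N},\sigma)] = \mathcal{O}(1/N)$ and has rate of order $\eta(x,\sigma)$, yielding $\mathcal{O}(\eta(x,\sigma)/N^2)$ per site. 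Summing over $(x,\sigma)\in V_N$ both pieces are dominated by
\[
\Gamma^H_{N,s}(\langle \pi^N,G_s\rangle)(\eta) \leq \frac{C}{N^2}|\eta|,
\]
uniformly in $s\in[0,T]$, with $C=C(G,H,c)$ depending only on $\|\partial_x G\|_\infty$, $\|G\|_\infty$, $\|H\|_\infty$, and the upper bound on $c$.

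Finally, since the dynamics conserve the particle number and $\eta^N_0\sim \mu^\varrho_N$ is product Poisson with $\E^{\varrho,H}_N[|\eta^N|] \leq 2\|\varrho\|_\infty N$, combining the above displays yields
\[
\E^{\varrho,H}_N\!\left[\sup_{t\in[0,T]} \bigl(\mathscr{M}_{N,t}^{H,G}(\pi^N_\cdot)\bigr)^2\right] \leq \frac{8CT\|\varrho\|_\infty}{N} \xrightarrow[N\to\infty]{} 0.
\]
There is no serious obstacle here; the whole argument is routine once one observes the scaling match. The only point worth highlighting is that the hyperbolic scaling of the active jumps (rate of order $N$, not $N^2$ as in a diffusive rescaling) combines with the $\mathcal{O}(1/N^2)$ squared one-step increment of $G$ to leave one surviving factor of $1/N$, which together with particle conservation on $[0,T]$ produces the stated $\mathcal{O}(1/N)$ decay.
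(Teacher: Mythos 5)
Your proof is correct and follows essentially the same route as the paper: Doob's $L^2$ maximal inequality, identification of the second moment with the integrated carr\'e du champ, the explicit per-site scaling ($\mathcal{O}(\eta(x,\sigma)/N^3)$ for active jumps, $\mathcal{O}(\eta(x,\sigma)/N^2)$ for flips), and the bound $\E^{\varrho,H}_N[|\eta^N|]=\mathcal{O}(N)$ from the Poisson initial data together with particle conservation. The only (immaterial) imprecision is writing the perturbation factor as $e^{\mathcal{O}(1/N)}$, since $H$ is only assumed continuous in space; it is uniformly bounded by $e^{2\|H\|_\infty}$, which is all that is needed.
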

\begin{proof}
By Doob's maximal inequality we find that 
    \begin{equation}
        \E^{\varrho,H}_N\left[\sup_{t\in[0,T]} \left(\mathscr{M}_{N,t}^{H,G}(\pi^N_{\cdot})\right)^2\right] \leq 4 \E^{\varrho,H}_N\left[ \left(\mathscr{M}_{N,T}^{H,G}(\pi^N_{\cdot})\right)^2\right] = 4\E^{\varrho,H}_N\left[ \int_0^T \Gamma_{N,t}^{H,G}(\pi^N_{t})\dd t\right],
    \end{equation}
where in the last equality we use that the quadratic variation of the martingale $\mathscr{M}_{N,t}^{H,G}$ is given by the integral of the  carr\'e du champ operator $\Gamma_{H,t}^{N,G}$ defined by
    \begin{equation}\label{cdc}
        \Gamma_{N,t}^{H,G}(\pi^N_{t}) = \mathscr{L}^H_{N,t} (\langle \pi^N_{t}, G_t \rangle )^2 - 2\langle \pi^N_{t}, G_t \rangle \cdot \mathscr{L}^H_{N,t} \langle \pi^N_{t}, G_t \rangle .
    \end{equation}
 For a general jump process generator $Lf(\eta) = \sum_{\eta'} r(\eta,\eta') (f(\eta') - f(\eta))$ we have that 
    \begin{equation}
        Lf^2(\eta) - 2 f(\eta)\cdot Lf(\eta) = \sum_{\eta'} r(\eta,\eta') \big(f(\eta') - f(\eta)\big)^2,
    \end{equation}
and so we find that 
    \begin{align}
        \nn\Gamma_{N,t}^{H,G}(\pi^N_t) &= \frac{1}{N} \sum_{(x,\sigma) \in V_N} \eta^N_t(x,\sigma)e^{H_t(\frac{x+\sigma}{N},\sigma) - H_t(\frac{x}{N},\sigma)} \big(G_t(\tfrac{x+\sigma}{N},\sigma) - G_t(\tfrac{x}{N},\sigma)\big)^2 \\
        &\qquad \qquad \qquad +\frac{1}{N^2} \sum_{(x,\sigma) \in V_N} \eta^N_t(x,\sigma)c(\sigma,m(\pi^N_t)) e^{-\sigma \widetilde{H}_t(x)}\big(G_t(x,-\sigma)- G_t(x,\sigma)\big)^2.
    \end{align}
By the mean value theorem and since $c(\sigma,m)$ is bounded, we can find an upper bound given by
    \begin{align}
        \Gamma_{N,t}^{H,G}(\pi^N_{\cdot}) \leq \mathcal{O}(\tfrac{1}{N^2})\cdot |\eta^N|.
    \end{align}
Using again that $\E^{\varrho,H}_N\left[|\eta^N|\right] \leq ||\varrho||_\infty N$, we then find that 
    \begin{equation}
        \E^{\varrho,H}_N\left[\sup_{t\in[0,T]} \left(\mathscr{M}_{N,t}^{H,G}(\pi^N_{\cdot})\right)^2\right] \leq 4 T \mathcal{O}(\tfrac{1}{N^2})\cdot\E^{\varrho,H}_N\left[|\eta^N|\right] \to 0.
    \end{equation}
\end{proof}
\begin{Proposition}
    $\{\pi^N_{\cdot}:N\in\mathbb{N}\}$ is tight in $D([0,T];\mathcal{M}_V)$.
\end{Proposition}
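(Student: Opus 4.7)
The plan is to invoke a tightness criterion for measure-valued c\`adl\`ag processes (e.g., Mitoma's/Jakubowski's criterion) which reduces tightness in $D([0,T];\mathcal{M}_V)$ to two ingredients: (i) a compact-containment condition for the one-dimensional marginals $\pi^N_t$ in $\mathcal{M}_V$, uniformly in $N$ and $t$; and (ii) tightness of the real-valued processes $\{\langle \pi^N_\cdot, G\rangle : N\in\mathbb{N}\}$ in $D([0,T];\mathbb{R})$ for every $G$ in the dense family $C^\infty(V)\subset C(V)$.

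For (i), I would exploit conservation of mass: since the dynamics only move particles (no births or deaths), $\pi^N_t(V)=|\eta^N|/N$ is constant in $t$, and under $\mu_N^\varrho$ one has $\E^{\varrho,H}_N[|\eta^N|]\leq \|\varrho\|_\infty N$. Markov's inequality then gives $\p^{\varrho,H}_N(\pi^N_t(V)>R)\leq \|\varrho\|_\infty/R$ uniformly in $N$ and $t$, and because $V$ is compact the ball $\{\mu\in\mathcal{M}_V:\mu(V)\leq R\}$ is weak-$*$ compact.

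For (ii), I would apply Aldous's criterion. Pointwise tightness of $\langle\pi^N_t,G\rangle$ follows from (i) since $|\langle\pi^N_t,G\rangle|\leq \|G\|_\infty\pi^N_t(V)$. For the modulus-of-continuity condition, I take a stopping time $\tau\leq T$ and deterministic $\theta\leq\gamma$, and use the Dynkin decomposition
\begin{equation*}
\langle\pi^N_{\tau+\theta},G\rangle-\langle\pi^N_\tau,G\rangle = \big(\mathscr{M}^{H,G}_{N,\tau+\theta}(\pi^N_\cdot)-\mathscr{M}^{H,G}_{N,\tau}(\pi^N_\cdot)\big) + \int_\tau^{\tau+\theta}\mathscr{L}^H_{N,s}\langle\pi^N_s,G\rangle\,\dd s.
\end{equation*}
The drift integrand, by the computation in the proof of Lemma \ref{lemma 2.1}, is bounded by $C(G,H,c)\cdot\pi^N_s(V)$, so the drift contribution is at most $C\theta\cdot\sup_{s\leq T}\pi^N_s(V)$, which by (i) can be made arbitrarily small in probability by choosing $\gamma$ small. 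For the martingale increment, the optional stopping theorem and Doob's $L^2$ inequality give
\begin{equation*}
\E^{\varrho,H}_N\!\left[\big(\mathscr{M}^{H,G}_{N,\tau+\theta}-\mathscr{M}^{H,G}_{N,\tau}\big)^2\right] \leq \E^{\varrho,H}_N\!\left[\int_\tau^{\tau+\theta}\Gamma^{H,G}_{N,s}(\pi^N_s)\,\dd s\right],
\end{equation*}
and the quadratic-variation bound $\Gamma^{H,G}_{N,s}(\pi^N_s)=\mathcal{O}(1/N^2)\cdot|\eta^N|$ derived in the proof of Lemma \ref{lemma 2.2}, together with $\E^{\varrho,H}_N[|\eta^N|]\leq\|\varrho\|_\infty N$, yields an $\mathcal{O}(\theta/N)$ estimate; in particular this vanishes as $N\to\infty$ for any fixed $\theta$.

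Combining the two estimates via a union bound gives Aldous's condition for $\langle\pi^N_\cdot,G\rangle$, and hence tightness in $D([0,T];\mathbb{R})$. Together with (i), Mitoma's criterion yields tightness of $\{\pi^N_\cdot\}$ in $D([0,T];\mathcal{M}_V)$. The only genuinely non-routine point is checking the applicability of the Mitoma/Jakubowski criterion in this specific topology on $\mathcal{M}_V$; the moment and martingale estimates themselves are immediate from the preceding lemmas and mass conservation.
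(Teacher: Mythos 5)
Your proposal is correct and follows essentially the same route as the paper: compact containment of the marginals via conservation of mass and the bound $\E^{\varrho,H}_N[|\eta^N|]\leq\|\varrho\|_\infty N$, combined with an Aldous-type equicontinuity estimate obtained from the Dynkin decomposition, where the drift is bounded by a constant times $\pi^N_s(V)$ and the martingale part is controlled through the carr\'e du champ bound of Lemma \ref{lemma 2.2}. The only (cosmetic) difference is in the packaging of the criterion: you reduce to real-valued projections $\langle\pi^N_\cdot,G\rangle$ via Mitoma/Jakubowski and then apply Aldous with stopping times, whereas the paper verifies the modulus-of-continuity condition directly for the measure-valued process in the metric $d$ built from a countable family of test functions $\phi_j$ --- the underlying estimates are identical.
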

\begin{proof}
    By Aldous' criteria, we have to show the following:
\begin{enumerate}[label=\textbf{B.\arabic*}]
\item \label{B1}For all $t\in[0,T]$ and $\varepsilon>0$ there exists a compact $K(t, \varepsilon) \subset \mathcal{M}_V$ such that 
    \begin{equation}
        \sup_{N\in \mathbb{N}} \p^{\varrho,H}_N\big(\pi^N_{t}\notin K(t,\varepsilon)\big) \leq \varepsilon.
    \end{equation}
\item \label{B2} For all $\varepsilon>0$
    \begin{equation}
        \lim_{\delta \to 0} \limsup_{N\to\infty} \p^{\varrho,H}_N\big(\omega(\pi^N_{\cdot},\delta)\geq \varepsilon\big) = 0,
    \end{equation}
where
    \begin{equation}
        \omega(\pi^N_{\cdot}, \delta) =\sup\{d(\pi^N_{t},\pi^N_{s})| : s,t \in [0,T], |t-s|<\delta\},
    \end{equation}
with $d$ the metric on $\mathcal{M}_V$ defined for $\alpha,\beta \in \mathcal{M}_V$ as  
    \begin{equation}
        d(\alpha,\beta) = \sum_{j=1}^\infty 2^{-j}\Big(1\wedge \big|\left<\alpha,\phi_j\right> - \left<\beta,\phi_j\right>\big|\Big).
    \end{equation}
\end{enumerate}  
We start with proving \ref{B1}. For every $C>0$, we have that the set 
$K_C = \left\{ \mu \in \mathcal{M}_V: \mu(V)\leq C\right\}$
is compact in $\mathcal{M}_V$. Furthermore,
    \begin{align}
        \p^{\varrho,H}_N(\pi^N_{t} \notin K_C) = \p^{\varrho,H}_N(\pi^N_{t}(V) > C) \leq \frac{1}{C} \E^{\varrho,H}_N[\pi^N_{t}(V)],
    \end{align}
where we used the Markov inequality in the last step. Here 
    \begin{equation}
        \E^{\varrho,H}_N[\pi^N_{t}(V)] = \E^{\varrho,H}_N\left[\frac{1}{N}\sum_{(x,\sigma) \in V_N} \eta_t^N(x,\sigma) \delta_{(\tfrac{x}{N},\sigma)}(V)\right] = \frac{1}{N} \E^{\varrho,H}_N\big[|\eta^N|\big] \leq ||\varrho||_\infty.
    \end{equation}
Therefore 
    \begin{equation}
        \p^{\varrho,H}_N(\pi^N_{t} \notin K_C) \leq \frac{1}{C} ||\varrho||_\infty.
    \end{equation}
Since we took $C$ arbitrarily, we can take $C>||\varrho||_\infty \varepsilon^{-1}$, and \ref{B1} follows. 

To prove \ref{B2}, take $\varepsilon'<\varepsilon$ and  note  that by the Markov inequality we have that 
    \begin{equation}
        \lim_{\delta\to0}\limsup_{N\to\infty} \p^{\varrho,H}_N(\omega(\pi^N_{\cdot},\delta) > \varepsilon) \leq \lim_{\delta\to0}\limsup_{N\to\infty}\frac{1}{\varepsilon} \E^{\varrho,H}_N\left[\omega(\pi^N_{\cdot},\delta)\right] \leq \lim_{\delta\to0}\limsup_{N\to\infty}\frac{1}{\varepsilon'} \E^{\varrho,H}_N\left[\omega(\pi^N_{\cdot},\delta)\right].
    \end{equation}
Now for $\omega(\pi^N_{\cdot},\delta)$ we have that 
    \begin{align}
        \omega(\pi^N_{\cdot},\delta) 
        &= \sup_{\substack{s,t\in[0,T]\\|t-s|<\delta}}\sum_{j=1}^\infty 2^{-j}\Big(1\wedge \big|\langle \pi^N_{t},\phi_j\rangle - \pi^N_{s}(\phi_j)\big|\Big)\nn\\
        &\leq 2^{-m} + \sup_{\substack{s,t\in[0,T]\\|t-s|<\delta}}\sum_{j=1}^m \big|\langle \pi^N_{t},\phi_j\rangle - \pi^N_{s}(\phi_j)\big|,
    \end{align}
where we took $m\in\mathbb{N}$ arbitrarily. Using the martingale $\mathscr{M}_{N,t}^{H,\phi_j}(\pi^N_{\cdot})$, we find that 
    \begin{align}
        \E^{\varrho,H}_N\left[\big|\langle \pi^N_{t},\phi_j\rangle - \pi^N_{s}(\phi_j)\big|\right] 
        &= \E^{\varrho,H}_N\left[\left|\mathscr{M}_{N,t}^{H,\phi_j}(\pi^N_{\cdot}) - \mathscr{M}_{N,s}^{H,\phi_j}(\pi^N_{\cdot}) - \int_s^t \mathscr{L}^H_{N,s}\langle \pi^N_{r},\phi_j\rangle \dd r \right|\right]\nn\\
    &\leq 2\E^{\varrho,H}_N\left[\sup_{t\in[0,T]} \left|\mathscr{M}_{N,t}^{H,\phi_j}(\pi^N_{\cdot})\right|\right] + \E^{\varrho,H}_N\left[\left|\int_s^t \mathscr{L}^H_{N,s}\langle \pi^N_{r},\phi_j\rangle\dd r\right|\right].
    \end{align}
By Lemma \ref{lemma 2.2} the first expectation vanishes as $N\to\infty$. By \eqref{Ln} we can upper bound the second expectation by 
    \begin{equation}
        \E^{\varrho,H}_N\left[\left|\int_s^t \mathscr{L}^H_{N,s}\langle \pi^N_{r},\phi_j\rangle\dd r\right|\right] \leq \E^{\varrho,H}_N\left[\left|\int_s^t\left\langle \pi^N_{r}, \left(\!A^{H}_{s, \pi^N_\cdot}\!\right)^{\!*} \phi_j\right\rangle\dd r\right|\right] + \E^{\varrho,H}_N\left[\left|\int_s^t R(N,\phi_j,r)\dd r\right|\right].
    \end{equation}
From \eqref{R1} we see that the second expectation also vanishes as $N\to\infty$ (uniformly in $s$ and $t$). For the first expectation note that 
    \begin{align}
        \E^{\varrho,H}_N\left[\left|\int_s^t\left\langle \pi^N_{r}, \left(\!A^{H}_{s, \pi^N_\cdot}\!\right)^{\!*} \phi_j\right\rangle\dd r\right|\right] 
        &\leq \E^{\varrho,H}_N\left[\left|\int_s^t \frac{1}{N} \sum_{(x,\sigma)\in V_N}\eta_r^N(x,\sigma) \left|\left|\left(\!A^{H}_{s, \pi^N_\cdot}\!\right)^{\!*}\phi_j\right|\right|_\infty \dd r  \right|\right]\nn\\
        \nn&\leq  \frac{1}{N} \left|\left|\left(\!A^{H}_{s, \pi^N_\cdot}\!\right)^{\!*}\phi_j\right|\right|_\infty |t-s| \cdot \E^{\varrho,H}_N\big[|\eta^N|\big]\\
        &\leq \left|\left|\left(\!A^{H}_{s, \pi^N_\cdot}\!\right)^{\!*}\phi_j\right|\right|_\infty ||\varrho||_\infty \delta .
    \end{align}
Combining  all of the above, we find that 
    \begin{align}
        \lim_{\delta\to0}\limsup_{N\to\infty} \p^{\varrho,H}_N(\omega(\pi^N_{\cdot},\delta)>\varepsilon)
        &\leq  \frac{1}{\varepsilon'} 2^{-m} + \lim_{\delta\to0}\limsup_{N\to\infty} \frac{1}{\varepsilon'} \E^{\varrho,H}_N\left[\sup_{\substack{s,t\in[0,T]\\|t-s|<\delta}}\sum_{j=1}^m \big|\langle \pi^N_{t},\phi_j\rangle - \pi^N_{s}(\phi_j)\big|\right]\nn\\
        \nn&\leq \frac{1}{\varepsilon'} 2^{-m} +\lim_{\delta \to 0} \frac{1}{\varepsilon'}  \left|\left|\left(\!A^{H}_{s, \pi^N_\cdot}\!\right)^{\!*}\phi_j\right|\right|_\infty ||\varrho||_\infty \delta \\
        &= \frac{1}{\varepsilon'} 2^{-m}.
    \end{align}
Since we took $m$ arbitrarily, we can choose it such that $2^{-m} \leq (\varepsilon')^2$, i.e., 
    \begin{equation}
        \lim_{\delta\to0}\limsup_{N\to\infty} \p^{\varrho,H}_N(\omega(\pi^N_{\cdot},\delta)>\varepsilon) < \varepsilon',
    \end{equation}
and since we took $\varepsilon'$ arbitrarily small, we indeed find that \ref{B2} holds. 
\end{proof}
We are now ready to give the proof of the hydrodynamic limit of the weakly perturbed model. 
\begin{proof}[Proof of Theorem \ref{weak hydro theorem}]
By Prokhorov's theorem, the tightness of the sequence $\{\pi^N_{\cdot}:N\in\mathbb{N}\}$ implies that the sequence is sequentially compact. If we then prove that every convergent subsequence converges to $\delta_\alpha$, then the theorem holds. 
    
Take such a convergent subsequence $\p^{\varrho,H}_{N_k}(\pi^N_{\cdot}\in\cdot) \to P^*$, with $P^*$ a probability measure on $D([0,T];\mathcal{M}_V)$.  For a given $\varepsilon>0$  and $G \in C^\infty(V)$,
define the set 
    \begin{equation}
        \Xi^{H,G}_{\varepsilon} = \left\{ \beta \in D([0,T];\mathcal{M}_V) : \sup_{t\in[0,T]} \left|\mathscr{M}^{H,G}_t(\beta)\right| \leq \epsilon \right\},
    \end{equation}
which is closed in the Skorokhod topology. By Portmanteau's theorem, we now have that 
    \begin{align}
        P^*(\Xi^{H,G}_{\varepsilon}) 
        &\nn\geq \lim_{k\to\infty} \p^{\varrho,H}_{N_k} (\pi^{N_k}_{\cdot} \in \Xi^{H,G}_{\varepsilon})\\
        \nn&= \lim_{k\to\infty} \p^{\varrho,H}_{N_k}\left( \sup_{t\in[0,T]} \left|\mathscr{M}_t^{H,G}(\pi^{N_k}_{\cdot})\right|\leq \varepsilon\right)\\
        &= \lim_{k\to\infty} \p^{\varrho,H}_{N_k}\left( \sup_{t\in[0,T]} \left|\mathscr{M}_{N_k,t}^{H,G}(\pi^{N_k}_{\cdot})\right| \leq \varepsilon\right),
    \end{align}
where we used Lemma \ref{lemma 2.1} for the last step. Now by using Chebyshev's inequality together with Lemma \ref{lemma 2.2}, we find that \\
    \begin{equation}
        \p^{\varrho,H}_{N_k}\left( \sup_{t\in[0,T]} \left|\mathscr{M}_{N_k,t}^{H,G}(\pi^{N_k}_{\cdot})\right| > \varepsilon\right) \leq \frac{1}{\varepsilon^2} \E^{\varrho,H}_{N_k}\left[  \sup_{t\in[0,T]} \left|\mathscr{M}_{N_k,t}^{H,G}(\pi^{N_k}_{\cdot})\right|^2 \right] \to 0,
    \end{equation}
and so indeed 
    \begin{equation}
        P^*(\Xi^{H,G}_{\varepsilon}) 
        \geq \lim_{k\to\infty} \p^{\varrho,H}_{N_k}\left( \sup_{t\in[0,T]} \left|\mathscr{M}_{N_k,t}^{H,G}(\pi^{N_k}_{\cdot})\right| \leq \varepsilon\right) = 1.
    \end{equation}
Since this is true for all $\varepsilon>0$ and $G\in C^{\infty}(V)$, it follows that $P^*=\delta_{\alpha^H}$.
\end{proof}\ \\

\end{document}